\newtheorem{thm}{Theorem}[section]
\newtheorem{prop}[thm]{Proposition}
\newtheorem{lem}[thm]{Lemma}
\newtheorem{cor}[thm]{Corollary}
\theoremstyle{remark}
\newtheorem{rem}[thm]{Remark}
\theoremstyle{definition}
\newtheorem{defi}[thm]{Definition}
\renewcommand{\le}{\leqslant}
\renewcommand{\ge}{\geqslant}
\renewcommand{\subset}{\subseteq}
\newcommand{\mcl}{\mathcal}
\newcommand{\msf}{\mathsf}
\newcommand{\msc}{\mathscr}
\newcommand{\N}{\mathbb{N}}
\newcommand{\Ll}{\left}
\newcommand{\Rr}{\right}
\newcommand{\1}{\mathbf{1}}
\newcommand{\R}{\mathbb{R}}
\newcommand{\Rd}{{\mathbb{R}^d}}
\newcommand{\C}{\mathcal{C}}
\newcommand{\Z}{\mathbb{Z}}
\renewcommand{\P}{\mathbb{P}}
\newcommand{\ov}{\overline}
\renewcommand{\bar}{\overline}
\renewcommand{\hat}{\widehat}
\newcommand{\td}{\tilde}
\newcommand{\eps}{\varepsilon}
\renewcommand{\d}{{\mathrm{d}}}
\newcommand{\dr}{\partial}
\newcommand{\al}{\alpha}
\newcommand{\be}{\beta}
\newcommand{\E}{\mathbb{E}}
\newcommand{\B}{\mathcal{B}}
\newcommand{\Bb}{{\B^{\al}_{p,q}}}
\newcommand{\Bl}[1]{\B^{#1,\, \mathrm{loc}}}
\newcommand{\Bbl}{\B^{\al,\, \mathrm{loc}}_{p,q}}
\newcommand{\Cl}{\mcl{C}^{\al}_{\mathrm{loc}}}
\newcommand{\n}{_{n,x}}
\renewcommand{\ni}{_{n,x}^{(i)}}
\newcommand{\nkold}{_{n,K,p}}
\newcommand{\nzkold}{_{k,K,p}}
\newcommand{\Ln}{\Lambda_n}
\newcommand{\psii}{\psi^{(i)}}
\newcommand{\V}{\msc{V}}
\newcommand{\W}{\msc{W}}
\DeclareMathOperator{\supp}{Supp}
\numberwithin{equation}{section}
\title[Tightness criterion and the Ising model]{A tightness criterion for random fields, with application to the Ising model}
\author{Marco Furlan, Jean-Christophe Mourrat}
\address[Marco Furlan]{Ceremade, PSL - Université Paris-Dauphine, Paris, France}
\address[Jean-Christophe Mourrat]{Ecole normale supérieure de Lyon, CNRS, Lyon, France}
\subjclass[2010]{60F17, 60G60, 82B20}
\keywords{tightness criterion, local Besov space, Ising model}
\begin{document}

\begin{abstract}
We present a criterion for a family of random distributions to be tight in local H\"older and Besov spaces of possibly negative regularity on general domains. We then apply this criterion to find the sharp regularity of the magnetization field of the two-dimensional Ising model at criticality, answering a question of \cite{cgn1}. 




\end{abstract}
\maketitle
%
%
%
%
%
%
%
%
\section{Introduction}
\label{s:intro}

The main goal of this paper is to provide a tightness criterion in local H\"older and Besov spaces of negative regularity. Roughly speaking, for $\al <0$, a distribution $f$ on $\Rd$ is $\al$-Hölder regular if for every $x \in \Rd$ and every smooth, compactly supported test function $\varphi$, we have
\begin{equation}
\label{e:behav}
\lambda^{-d} \langle f,\varphi(\lambda^{-1} ( \, \cdot \, - x))\rangle \lesssim \lambda^{\al} \qquad (\lambda \to 0).
\end{equation}

Random objects taking values in distribution spaces are of interest in several areas of probability theory. The spaces considered here are close to those introduced in \cite{struc} in the context of non-linear stochastic PDE's. Another case of recent interest is the scaling limit of the critical two-dimensional Ising model, see \cite{cgn1,chi}. Fluctuations in homogenization of PDE's with random coefficients are also described by random distributions resembling the Gaussian free field, see \cite{correl, fluct,fluct2,akm3, akmbook}. More generally, the class of random objects whose scaling limit is the Gaussian free field is wide, see for instance \cite{nadspe, gos, bisspo} for the $\nabla \varphi$ random interface model, \cite{ke} for random domino tilings, or \cite{lebserf,bauer} for Coulomb gases. 

As in \cite{struc}, we wish to devise spaces where \eqref{e:behav} holds \emph{locally} uniformly over~$x$. Such spaces can be thought of as local Besov spaces. We also wish to allow for distributions that are defined on a domain $U \subset \R^d$, but not necessarily on the full space $\R^d$. Besov spaces defined on domains of $\R^d$ have already been considered, see e.g.\ \cite[Section 1.11]{tri3} and the references therein. In the standard definition, a distribution $f$ belongs to the Besov space on $U$ if and only if there exists a distribution $g$ in the Besov space on $\R^d$ (with same exponents) such that $g_{| U} = f$; the infimum of the norm of $g$ over all admissible $g$'s then provides with a norm for the Besov space on $U$. 

In applications to the problems of probability theory mentioned above, this definition is often too stringent. Consider the case of homogenization. Let $u_\eps$ be the solution to a Dirichlet problem on $U \subset \R^d$, for a divergence-form operator with random coefficients varying on scale $\eps \to 0$. While $\eps^{-\frac d 2} (u_\eps - \E[u_\eps])$ is expected to converge to a random field in the bulk of the domain, this is most likely not the case close to the boundary: a comparably very large boundary layer is expected to be present. This boundary layer should become asymptotically thinner and thinner as $\eps \to 0$, but should nevertheless prevent convergence to happen in a function space such as the one alluded to above.

As a consequence, we will define local Besov spaces that are very tolerant to bad behavior close to the boundary. In short, we take the inductive limit of Besov spaces over compact subsets of the domain. Even when $U = \R^d$, the space thus defined will be stricty larger than the usual Besov space on $\R^d$, because of its locality. This locality is convenient for instance when handling stationary processes.

While we did not find previous works where such spaces appear, readers familiar with Besov spaces will not be surprised by the results presented here.  On the other hand, we hope that probabilists will appreciate to find here a tightness criterion that is very convenient to work with. In order to convince the reader of the latter, we now state a particular case of our main tightness result, Theorem~\ref{t:tight}, when the domain $U$ is the whole space $\R^d$. For each $\al \in \R$, we define a function space $\Cl(\Rd)$ of distributions with ``local $\al$-H\"older regularity'', and for any given $r \ge  | \al|$, we identify a \emph{finite} family of compactly supported functions $\phi$, $(\psii)_{1 \le i < 2^d}$ of class~$C^r$ such that the following holds.

\begin{thm}
\label{t.easy.tightness}
Let $(f_m)_{m \in \N}$ be a family of random linear forms on $C^r_c(\Rd)$, let $1 \le p < \infty$ and let $\be \in \R$ be such that $|\be| < r$. Assume that there exists a constant $C < \infty$ such that for every $m \in \N$, the following two statements hold:
	\begin{equation}
		\label{e:intro-tight1}
		\sup_{x \in \Rd} \E \Ll[ \Ll| \langle f_m,\phi(\, \cdot \, - x) \rangle \Rr|^p  \Rr]^{1/p} \le C \; ;
	\end{equation}
	and, for every $i \in \{1,\ldots, 2^d-1\}$ and $n \in \N$,
	\begin{equation}
		\label{e:intro-tight2}
		\sup_{x \in \Rd} 2^{dn} \ \E \Ll[ \Ll| \langle f_m,\psii(2^n(\, \cdot \, - x)) \rangle \Rr|^p  \Rr]^{1/p} \le C \, 2^{-n \be}.
	\end{equation}
	Then the family $(f_m)$ is tight in $\Cl(\Rd)$ for every $\al < \be - \frac{d}{p}$.
\end{thm}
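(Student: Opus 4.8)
The plan is to deduce the statement from two structural facts about the spaces $\Cl$ — which we take as established in the body of the paper — together with a standard Markov-plus-compactness argument. The first fact is the \emph{wavelet characterization} of the local norms: for every compact $K \subset \Rd$ and every $s$ with $|s| < r$, the $\mathcal{B}^{s}_{p,p}$-seminorm of $f$ attached to $K$ is comparable to
\[
\Ll( \sum_{k} \big|\langle f, \phi(\,\cdot\,-k)\rangle\big|^p \Rr)^{\!1/p} + \Ll( \sum_{n \in \N} 2^{n p (s + d - \frac dp)} \sum_{i} \sum_{x} \big|\langle f, \psii(2^n(\,\cdot\,-x))\rangle\big|^p \Rr)^{\!1/p},
\]
where $k$ runs over the points of $\Z^d$ and $x$ over the points of $2^{-n}\Z^d$ for which the corresponding test function has support meeting a fixed neighbourhood of $K$, so that there are $O(1)$ such $k$ and $O(2^{dn})$ such pairs $(i,x)$ at scale $n$, and $f$, being defined on all of $\Rd$, may be paired with every one of them. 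The second fact is that the restriction embedding $\mathcal{C}^{\al'}(K') \hookrightarrow \mathcal{C}^{\al}(K)$ is \emph{compact} whenever $\al < \al'$ and $K$ lies in the interior of $K'$, and, relatedly, that $\Cl(\Rd)$ is a separable Fréchet space whose topology is the projective limit of the seminorms $\|\cdot\|_{\mathcal{C}^{\al}(K_j)}$ along any exhaustion $K_1 \subset K_2 \subset \cdots$ of $\Rd$ by compacts with $K_j \subset \mathrm{int}(K_{j+1})$.

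The first step is to turn the hypotheses into a uniform bound on an expected local Besov norm. Fix a compact $K$ and choose $s$ with $\al + \frac dp < s < \be$; this is possible since $\al < \be - \frac dp$, and $s$ can be taken with $|s| < r$. Raising the wavelet characterization to the power $p$ and taking expectations, the $\phi$-block contributes an $m$-independent finite constant by \eqref{e:intro-tight1} (there are $O(1)$ relevant $k$'s, each with $\E|\langle f_m,\phi(\cdot-k)\rangle|^p \le C^p$), while the scale-$n$ block contributes at most $O(2^{dn})\cdot 2^{np(s+d-\frac dp)}\cdot(C\,2^{-dn}2^{-n\be})^p = O(1)\,C^p\,2^{np(s-\be)}$ by \eqref{e:intro-tight2}. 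Hence
\[
\sup_{m \in \N} \E\Ll[\, \|f_m\|_{\mathcal{B}^{s}_{p,p}(K)}^p \,\Rr] \;\lesssim\; 1 + C^p \sum_{n \in \N} 2^{np(s-\be)} \;<\; \infty,
\]
the series converging precisely because $s < \be$. The Besov embedding $\mathcal{B}^{s}_{p,p}(K) \hookrightarrow \mathcal{C}^{s - d/p}(K)$ then gives $\sup_m \E[\|f_m\|_{\mathcal{C}^{\al'}(K)}^p] < \infty$ for every compact $K$ and every $\al' \le s - \frac dp$; fix once and for all some $\al' \in (\al, \be - \frac dp)$ for which this holds on every compact.

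The second step is the tightness argument. Let $\eta > 0$. For each $j$, the bound $M_j := \sup_m \E[\|f_m\|_{\mathcal{C}^{\al'}(K_{j+1})}^p] < \infty$ and Markov's inequality furnish $R_j < \infty$ with $\P(\|f_m\|_{\mathcal{C}^{\al'}(K_{j+1})} > R_j) \le \eta\, 2^{-j}$ for all $m$. Set $\mathcal{K}_\eta := \bigcap_{j \ge 1}\{ f \in \Cl(\Rd) : \|f\|_{\mathcal{C}^{\al'}(K_{j+1})} \le R_j \}$. Since the embedding $\mathcal{C}^{\al'}(K_{j+1}) \hookrightarrow \mathcal{C}^{\al}(K_j)$ is compact, the image of $\mathcal{K}_\eta$ in each $\mathcal{C}^{\al}(K_j)$ is relatively compact, and as the topology of $\Cl(\Rd)$ is the projective limit of these pieces, $\mathcal{K}_\eta$ is relatively compact in $\Cl(\Rd)$. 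Finally $\P(f_m \notin \mathcal{K}_\eta) \le \sum_{j \ge 1} \eta\, 2^{-j} = \eta$ for every $m$, which is the asserted tightness.

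The substantive inputs are the wavelet characterization — this is where $\phi, \psii \in C^r$ and $|\be| < r$ (and $|\al| < r$) are used, so that the compactly supported system resolves regularities up to $\be$, and also the point that needs care near the boundary of $K$ — and the compactness of the local embedding; both belong to the general theory developed earlier in the paper. Granting them, the work above is bookkeeping: a geometric summation over scales whose convergence is exactly $s < \be$ (equivalently $\al < \be - \frac dp$ after the Besov embedding), followed by a routine diagonal construction of the tight set. The hypothesis $1 \le p < \infty$ enters precisely in the $\ell^p$-summation over the $O(2^{dn})$ coefficients at each scale; for $p = \infty$ there is no analogous control of $\E[\sup_{x}|\,\cdot\,|]$, which is why that case is excluded.
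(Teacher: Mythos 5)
Your proposal is correct and follows essentially the same route as the paper's proof of the general criterion (Theorem~\ref{t:tight}): convert the moment hypotheses into a uniform bound on the expectation of a wavelet-based local Besov seminorm by summing the $O(2^{dn})$ coefficients per scale, pass to the H\"older scale via the local Besov embedding (Proposition~\ref{p:loc-emb}), and conclude by Chebyshev together with the compactness of embeddings between local spaces of different regularity (Propositions~\ref{p:compact} and~\ref{p:crit}, packaged in Corollary~\ref{c:precomp}). The only differences are cosmetic: you work with $\B^{s}_{p,p}$ seminorms for $s<\be$ and spend the strict inequality in the geometric sum, whereas the paper keeps $\B^{\be}_{p,\infty}$ and spends it in the compact embedding.
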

Note that the assumption in Theorem~\ref{t.easy.tightness} simplifies when the field under consideration is stationary, since the suprema in \eqref{e:intro-tight1} and \eqref{e:intro-tight2} can be removed. Although we are primarily motivated by applications of this result for negative exponents of regularity, the statements we prove are insensitive to the sign of this exponent. Naturally, such tightness statements can then be lifted to statements of convergence in $\Cl(\R^d)$ provided that one verifies that the sequence $(f_m)$ has a unique possible limit point (and the latter can be accomplished by checking that for each test function $\chi \in C^\infty_c(\Rd)$ the random variable $\langle f_m, \chi \rangle$ converges in law as $m$ tends to infinity). 

When $\al < 0$, the definition of the space $\C^\al$ is easy to state and in agreement with the intuition of \eqref{e:behav}, see Definition~\ref{def.Besov} below. For $\al \in (0,1)$, the space $\C^\al$ is (the separable version of) the space of $\al$-H\"older regular functions. For any $\al \in \R$, the space $\C^\al$ is the Besov space with regularity index $\al$ and integrability exponents $\infty, \infty$, which we denote by $\B^\al_{\infty,\infty}$. 
The assumption in Theorem~\ref{t.easy.tightness} is sufficient to establish tightness in $\Bl{\al}_{p,q}(\R^d)$ for every $\al < \be$ and $q \in [1,\infty]$. A variant of the argument also provides for a result in the spirit of the Kolmogorov continuity theorem, see Proposition~\ref{p:kolmotype} below.

The proof of Theorem~\ref{t.easy.tightness} relies on showing that the family $f_m$ belongs with high probability to a bounded set in $\mcl{C}^{\be - d/p}_{\mathrm{loc}}(\R^d)$, and exploiting the well-known compact embedding result of Proposition~\ref{p:compact} to obtain tightness for $\al < \be - \frac{d}{p}$. The more general Theorem~\ref{t:tight} for arbitrary Besov spaces $\Bl{\al}_{p,q}(U)$ follows along the same lines, once we come up with a working definition of $\Bl{\al}_{p,q}(U)$ on an arbitrary domain $U \subset \R^d$. This is done in Proposition~\ref{p:loc-wave} using the concept of spanning sequence introduced in Definition~\ref{def:span}.

The functions $\phi$, $(\psii)_{1 \le i < 2^d}$ are chosen as wavelets with compact support. We found it interesting to distinguish the treatment of $\C^\al$-type spaces from the more general $\B^\al_{p,q}$ spaces. Besides allowing for a simpler definition, the $\C^\al$-type spaces indeed enable us to give a fully self-contained proof of Theorem~\ref{t.easy.tightness}, save for the existence of wavelets with compact support which of course we do not reprove. We borrow more facts from the literature on function spaces to prove the tightness criterion in general Besov spaces.

\smallskip

We then apply the tightness criterion of Theorem~\ref{t:tight} to study the magnetization field of the two-dimensional Ising model at the critical temperature. Let $U \subset \R^2$ be an open set, and for $a > 0$, let $U_a := U \cap (a \Z^2)$. Denote by $(\sigma_y)_{y \in U_a}$ the Ising spin system at the critical temperature, with, say, $+$ boundary condition, and define the magnetization field
\begin{equation} 
\label{e.def.magnetization-delta}
\hat{\Phi}_a := a^{\frac {15}{8}} \sum_{y \in U_a} \sigma_y \, \delta_y ,
\end{equation}
where $\delta_y$ is the Dirac mass at $y$. Dirac masses do not belong to $\B^{-1}_{2,2}(U)$, and thus prevent the family $(\hat{\Phi}_a)_{a \in (0,1]}$ from being tight in this space.
Following \cite{cgn1}, we will thus prefer to work with the piecewise constant random field
\begin{equation}\label{e.def.magnetization}
\Phi_a := a^{-\frac 1 8} \sum_{y \in U_a} \sigma_y \, \mathbbm{1}_{S_a(y)},
\end{equation}
where $S_a(y)$ is the square centered at $y$ of side length $a$. 
We note however that the set of limit points of $(\hat{\Phi}_a)_{a \in (0,1]}$ and of $(\Phi_a)_{a \in (0,1]}$ coincide. Indeed, one can check using Definition~\ref{def.Besov} that the difference $\Phi_a - \hat{\Phi}_a$ converges to zero almost surely in, say, $\mcl{C}^{\alpha}_{\mathrm{loc}}(U)$, for every $\alpha < -3$. 

In \cite{cgn1}, the authors showed that for $U = [0,1]^2$ and every $\eps > 0$, the family $(\Phi_a)_{a \in (0,1]}$ is tight in $\B^{-1-\eps}_{2,2}(U)$\footnote{see for instance \cite[Definition~2.68]{bcd} or \cite{belo} for the identification between the Sobolev spaces used in \cite{cgn1} and the spaces $\B^{\al}_{2,2}$ we use in the present paper.}, and proceeded to discuss similar results in more general domains. 
They asked in which precise function spaces the family $(\Phi_a)$ is tight.

%
Using the Onsager correlation bounds and the tightness criterion for general domains of Theorem~\ref{t:tight}, we prove the following result. 
\begin{thm}
\label{t.Ising}
Fix an open set $U \subset \R^2$. For every $\eps > 0$ and $p,q \in [1,\infty]$, the family of Ising magnetization fields $(\Phi_a)_{a \in (0,1]}$ on $U$ is tight in $\Bl{-\frac 1 8 - \eps}_{p,q}(U)$. 
\end{thm}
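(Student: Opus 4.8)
The plan is to deduce Theorem~\ref{t.Ising} from the general tightness criterion, Theorem~\ref{t:tight}, by verifying its hypotheses for the family $(\Phi_a)$ with the help of the Onsager correlation bounds for the critical planar Ising model. Each $\Phi_a$ lies in $L^\infty_{\mathrm{loc}}(U)$ and thus defines a random linear form to which Theorem~\ref{t:tight} applies; for $a$ bounded away from $0$ the uniform estimate $\|\Phi_a\|_{L^\infty}\le a^{-1/8}$ together with the Rellich-type compact embedding of Besov spaces on bounded domains already gives tightness, so it suffices to treat $a\to 0$. Because the local Besov space on $U$ is an inductive limit over compact subsets, Theorem~\ref{t:tight} only requires the bounds \eqref{e:intro-tight1}--\eqref{e:intro-tight2} for $x$ in a neighbourhood of a given compact $K\subset U$, all relevant test functions then being supported in a fixed compact $K'\subset U$. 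This localization is what makes the argument work: it is only for spins in the bulk of $U$ that $\E[\sigma_y]$ is small, of order $a^{1/8}$, whereas near $\partial U$ it stays of order one and $a^{-1/8}\sigma_y$ behaves badly --- but local Besov spaces are designed to tolerate exactly this. When run with exponent $p$, the criterion promotes a bound of the form $C\,2^{-n\be}$ on the right of \eqref{e:intro-tight2} into tightness in $\Bl{\al}_{p,q}(U)$ for every $\al<\be$ and every $q$; so the objective is to establish \eqref{e:intro-tight1}--\eqref{e:intro-tight2} with $\be=-\tfrac18$.

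The heart of the matter is the moment estimate for the wavelet coefficients. Writing $g$ for one of $\phi(\,\cdot\,-x)$ or $\psii(2^n(\,\cdot\,-x))$ and $w_y(g):=\int_{S_a(y)}g$, one has $\langle\Phi_a,g\rangle=a^{-1/8}\sum_{y\in U_a}\sigma_y\,w_y(g)$, hence for even $p=2k$
\[
	\E\big[\langle\Phi_a,g\rangle^{2k}\big]=a^{-k/4}\sum_{y_1,\dots,y_{2k}}w_{y_1}(g)\cdots w_{y_{2k}}(g)\;\E\big[\sigma_{y_1}\cdots\sigma_{y_{2k}}\big].
\]
For spins in $K'$, the Onsager bounds give $\big|\E[\sigma_{y_1}\cdots\sigma_{y_{2k}}]\big|\le C_{k,K'}\sum\prod(\text{factors})$, the sum over partitions of $\{1,\dots,2k\}$ into pairs and singletons, where a pair $\{i,j\}$ contributes $a^{1/4}\,(|y_i-y_j|\vee a)^{-1/4}$ (the truncated two-point function being dominated by the free one, which decays like the inverse quarter power of the lattice distance) and a singleton $y$ contributes $\E[\sigma_y]\le C_{K'}a^{1/8}$. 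Expanding and factorizing, the whole sum is controlled by products of the two elementary quantities $\sum_y|w_y(g)|$ and $\sum_{y,y'}|w_y(g)|\,|w_{y'}(g)|\,(|y-y'|\vee a)^{-1/4}$. For $g=\psii(2^n(\,\cdot\,-x))$ one uses $|w_y(g)|\le\|\psii\|_\infty\min(a^2,C2^{-2n})$, the fact that $w_y(g)=0$ unless $y$ lies in a ball of radius $C\max(2^{-n},a)$, and $\sum_y|w_y(g)|\le 2^{-2n}\|\psii\|_{L^1}$; a Riemann-sum estimate then gives $\sum_{y,y'}|w_y(g)|\,|w_{y'}(g)|\,(|y-y'|\vee a)^{-1/4}\le C\,2^{-15n/4}$, with the same exponent whether $a\le 2^{-n}$ or $a>2^{-n}$, while $\sum_y|w_y(g)|\le C\,2^{-2n}$. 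Thus a pair is worth $a^{-1/4}\cdot a^{1/4}\cdot 2^{-15n/4}=O(2^{-15n/4})$ and a singleton is worth $a^{-1/8}\cdot a^{1/8}\cdot 2^{-2n}=O(2^{-2n})$, which is $O(2^{-15n/8})$; since the coefficients come in $2k$ slots this yields $\E\big[|\langle\Phi_a,\psii(2^n(\,\cdot\,-x))\rangle|^{2k}\big]^{1/2k}\le C_k\,2^{-15n/8}$, i.e., after multiplication by $2^{2n}$, exactly \eqref{e:intro-tight2} with $\be=-\tfrac18$. The same computation for $g=\phi(\,\cdot\,-x)$, whose support has fixed size, gives $\E\big[|\langle\Phi_a,\phi(\,\cdot\,-x)\rangle|^{2k}\big]^{1/2k}\le C_k$, i.e.\ \eqref{e:intro-tight1}; all constants are uniform in $x$ near $K$ and in small $a$.

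It remains to cover all exponents $p,q$. For $p\in[1,\infty)$ I would pick an even integer $2k\ge p$; then $\|X\|_{L^p}\le\|X\|_{L^{2k}}$ shows \eqref{e:intro-tight1}--\eqref{e:intro-tight2} hold with exponent $p$ and $\be=-\tfrac18$, so Theorem~\ref{t:tight} yields tightness of $(\Phi_a)$ in $\Bl{\al}_{p,q}(U)$ for all $\al<-\tfrac18$ and all $q\in[1,\infty]$, in particular in $\Bl{-\frac 1 8 - \eps}_{p,q}(U)$. For $p=\infty$, given $\eps>0$ I would choose a finite $r$ with $2/r\le\eps/2$, apply the previous case to get tightness in $\Bl{-\frac 1 8 - \eps/2}_{r,q}(U)$, and transport it along the continuous Besov embeddings $\Bl{-\frac 1 8 - \eps/2}_{r,q}(U)\hookrightarrow\Bl{-\frac 1 8 - \eps/2 - 2/r}_{\infty,q}(U)\hookrightarrow\Bl{-\frac 1 8 - \eps}_{\infty,q}(U)$, which preserve tightness.

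The main obstacle will be the estimate of the second paragraph, and within it two points require care: first, pinning down the exact form of the Onsager correlation bounds for $+$ boundary conditions on an arbitrary domain --- that $m$-point functions are dominated by sums over partitions of the index set into pairs weighted by the two-point function and singletons weighted by the magnetization, with constants uniform over compact subsets of $U$ and over the lattices $a\Z^2$; and second, the bookkeeping of the double sum $\sum_{y,y'}|w_y(g)|\,|w_{y'}(g)|\,(|y-y'|\vee a)^{-1/4}$ so that it reproduces the exponent $2^{-15n/4}$, hence $\be=-\tfrac18$, uniformly in the ratio between the two length scales $a$ and $2^{-n}$. The boundary localization and the interpolation over $(p,q)$ are then routine.
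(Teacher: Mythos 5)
Your overall strategy is the same as the paper's: verify the moment bounds \eqref{e:phitight1p}--\eqref{e:phitight2p} with $\be=-\tfrac18$ for even $p$, invoke Theorem~\ref{t:tight}, and pass to general $p,q$ via the embeddings (Remark~\ref{r:easy.embed}, Proposition~\ref{p:loc-emb}); your scale bookkeeping (a contribution $2^{-15n/8}$ per spin slot, treated separately in the regimes $a\le 2^{-n}$ and $a>2^{-n}$) also matches the paper's computation. The genuine gap is the correlation input you feed into this machinery. You assert, as if it were part of ``the Onsager bounds'', that the $2k$-point function under $+$ boundary conditions is dominated by a sum over partitions of $\{1,\dots,2k\}$ into pairs and singletons, pairs weighted by the two-point function $a^{1/4}(|y_i-y_j|\vee a)^{-1/4}$ and singletons by the magnetization $\lesssim a^{1/8}$, uniformly over compacts, domains and $a$. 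No such multi-point inequality is proved in the paper or taken from the cited literature, and it does not follow from the tools you invoke: FKG and monotonicity (Lemma~\ref{l:fkg}) give \emph{lower} bounds for probabilities of intersections of increasing events, i.e.\ they go in the wrong direction for factorizing the FK event $A^1_{y_1\ldots y_{2k}}$ into pair connections; nor does it follow pointwise from the one-arm product bound $\prod_i\ell_i^{-1/8}$, whose expansion produces cycle terms such as $\mathrm{d}(y_1,y_2)^{-1/8}\mathrm{d}(y_2,y_3)^{-1/8}\mathrm{d}(y_3,y_1)^{-1/8}$ that are not dominated by any pairs-and-singletons product. You yourself flag this step as ``the main obstacle''; it is in fact the mathematical core of the theorem, so as written the argument is incomplete.

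What the paper does instead is to avoid any pointwise Wick-type domination and prove directly the \emph{summed} estimate (Proposition~\ref{p:garbound}): $\sum_{y_1,\dots,y_p\in U_N}\E^+_{U_N}(\sigma_{y_1}\cdots\sigma_{y_p})\le C(N+1)^{\frac{15}{8}p}$. Via the Edwards--Sokal coupling (Lemma~\ref{l:fk-ising}), the event $A^1_{y_1\ldots y_p}$ forces each $y_i$ to be connected to the boundary of a box $B_i$ of side comparable to the distance $\ell_i$ to the nearest other marked point or to $\partial U_N$; since the $B_i$ are disjoint, the domain Markov property and monotonicity (Lemma~\ref{l:domainmarkov}, Remark~\ref{r:domainmono}) factorize this into a product of one-arm probabilities, Onsager's one-arm bound (Lemma~\ref{l:onsager}) gives $\ell_i^{-1/8}$, and a directed-graph combinatorial argument (with a separate treatment of coinciding points) carries out the multiple sum. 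Note that this summed bound is all your computation actually requires (your double sum in $y,y'$ is its $p=2$ instance), so replacing your claimed partition inequality by Proposition~\ref{p:garbound} repairs the proof; the remaining ingredients of your proposal (restriction to the wavelet support, the two regimes in $n$ versus $a$, uniformity over $x$ in a compact, and the $(p,q)$ embeddings, the last of which handles $p=\infty$ exactly as you describe) are correct and essentially identical to the paper's. The preliminary reduction to ``$a$ bounded away from $0$'' is unnecessary, since all estimates are uniform over $a\in(0,1]$.
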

We also prove that the previous result is essentially sharp, when $U = \R^2$.
\begin{thm}  
\label{t.Ising.converse}
Let $\eps > 0$ and $p , q \in [1,\infty]$. If $\Phi$ is a limit point of the family of Ising magnetization fields $(\Phi_a)_{a \in (0,1]}$ on $\R^2$, then 
$\Phi \notin \Bl{-\frac 1 8 + \eps}_{p,q}(\R^2)$ with positive probability. In particular, the family $(\Phi_a)_{a \in (0,1]}$ is \emph{not} tight in $\Bl{-\frac 1 8 + \eps}_{p,q}(\R^2)$.
\end{thm}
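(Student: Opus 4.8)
The plan is to show that any limit point $\Phi$ fails to lie in $\Bl{-\frac18+\eps}_{p,q}(\R^2)$ by producing, on a compact set, a lower bound on the relevant wavelet/Besov coefficients that is incompatible with membership in that space. Since $\Bl{\al}_{p,q}(\R^2) \subset \Bl{\al}_{\infty,\infty}(\R^2) = \Cl(\R^2)$ for all $p,q$ (by the usual Besov embeddings, combined with the locality of the spaces — on a fixed compact set a local Besov norm controls the local $\mcl{C}^{\al'}$ norm for $\al' < \al$), it suffices to treat $q=p=\infty$, i.e.\ to show $\Phi \notin \Ccl{-\frac18+\eps}(\R^2)$ with positive probability. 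Equivalently, in terms of the wavelet characterization of $\mcl{C}^\al$ used to prove Theorem~\ref{t.easy.tightness}, it suffices to exhibit (with positive probability) a sequence of scales $2^{-n}$, points $x_n$ in a fixed ball, and a wavelet $\psii$ such that $2^{2n}|\langle \Phi, \psii(2^n(\cdot - x_n))\rangle|$ fails to be $O(2^{-n(-\frac18+\eps)})$; in fact it is cleanest to aim for a matching \emph{lower} bound of order $2^{-n(-\frac18)}$ up to logarithmic or constant factors, coming from the Onsager lower bounds on the two-point function.

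The key steps, in order. First, reduce to a statement about the discrete fields $\Phi_a$: if $\Phi_{a_k} \to \Phi$ in $\Cl(\R^2)$ along a subsequence, then for each fixed test function $\varphi$, $\langle \Phi_{a_k}, \varphi \rangle \to \langle \Phi, \varphi\rangle$; so a lower bound for $\E[|\langle \Phi_a, \varphi_{n}\rangle|^2]$ uniform in small $a$, for an appropriate family $\varphi_n$ of rescaled test functions at scale $2^{-n}$, transfers to $\Phi$. Second, compute the second moment: for a test function $\varphi_n := 2^{2n}\psi(2^n(\cdot - x))$ with $\psi$ smooth, compactly supported and \emph{of vanishing mean} (so that it kills the leading deterministic magnetization),
\[
\E\Ll[\Ll|\langle \Phi_a, \varphi_n\rangle\Rr|^2\Rr] = a^{-\frac14}\sum_{y,y' \in U_a} \E[\sigma_y \sigma_{y'}]\, \Ll(\int_{S_a(y)}\varphi_n\Rr)\Ll(\int_{S_a(y')}\varphi_n\Rr),
\]
and by the Onsager bounds $\E[\sigma_y\sigma_{y'}] \asymp |y-y'|^{-1/4}$ (on the relevant scales, away from the boundary and for $+$ b.c.\ the truncated correlation also behaves this way up to the square of the one-point function, which is handled by the vanishing mean of $\psi$), a Riemann-sum computation gives $\E[|\langle \Phi_a, \varphi_n\rangle|^2] \asymp 2^{n\cdot 2\cdot(-\frac18)} = 2^{-\frac n4}$ uniformly for $a \le 2^{-n}$, matching the scaling of a $\mcl{C}^{-1/8}$ field. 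Third, upgrade this single-scale second-moment lower bound to an almost-sure failure of the $\mcl{C}^{-\frac18+\eps}$ bound: run the estimate along a sparse sequence of scales $n_j$ and a grid of points $x$ spaced $\asymp 2^{-n_j}$ apart inside a fixed ball, use that distinct wavelet coefficients at the same scale are weakly correlated (again via the decay of the two-point function) to get, via a second-moment / Paley–Zygmund argument, that with probability bounded below the maximum over the grid of $2^{2n_j}|\langle \Phi, \psi(2^{n_j}(\cdot - x))\rangle|$ is $\gtrsim 2^{-n_j/8}$; since there are $\asymp 2^{2n_j}$ grid points and the events at well-separated scales are asymptotically independent, Borel–Cantelli–type reasoning (or simply that the liminf of these probabilities is positive) forces $\Phi \notin \mcl{C}^{-\frac18+\eps}$ with positive probability. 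The final sentence of the theorem then follows because tightness in $\Bl{-\frac18+\eps}_{p,q}(\R^2)$ would force every limit point to lie in that space almost surely.

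The main obstacle I expect is Step three: converting an $L^2$ lower bound at a single scale and point into an almost-sure (or positive-probability) \emph{sup over scales and points} lower bound, which requires quantitative control of the correlations between wavelet coefficients $\langle \Phi_a, \psi(2^n(\cdot-x))\rangle$ both across nearby points $x$ at a fixed scale and across different scales. This is exactly where the Onsager correlation bounds do the real work: one needs not just $\E[\sigma_y\sigma_{y'}] \asymp |y-y'|^{-1/4}$ but enough control (e.g.\ via the fourth moment, using the known bounds on the four-point function of the critical Ising model, or via a more hands-on decorrelation estimate for disjointly supported smooth averages) to make a Paley–Zygmund argument run. A secondary technical point is dealing with the boundary of $U$ and the $+$ boundary condition: one should choose the test functions supported in a fixed compact subset of $U$ (possible since $U=\R^2$ in this statement, or more generally any fixed compact $K\Subset U$), and use the vanishing mean of $\psi$ to cancel the non-truncated part of the correlations so that only the translation-invariant-like bulk behavior matters.
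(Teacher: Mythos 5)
There is a genuine gap, and it occurs at the very first step. You reduce the whole theorem to the case $p=q=\infty$ by asserting that $\Bl{\al}_{p,q}(\R^2) \subset \Cl(\R^2)$, justified by the claim that on a compact set a local $\B^{\al}_{p,q}$ bound controls the local $\mcl C^{\al'}$ norm for every $\al' < \al$. This is false for finite $p$: the embedding from integrability exponent $p$ into $\infty$ costs exactly $d/p = 2/p$ in regularity (Proposition~\ref{p:loc-emb}), and this loss is sharp even locally (a single bump $\lambda^{\al - 2/p}\varphi(\cdot/\lambda)$ has bounded $\B^{\al}_{p,q}$ norm but $\Ccl{\al'}$ norm blowing up for every $\al' > \al - 2/p$; a Dirac mass gives a concrete counterexample to $\Bl{\al}_{1,\infty} \subset \Ccl{\al}$). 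Consequently, proving $\Phi \notin \Ccl{-\frac18+\eps}(\R^2)$ does \emph{not} imply $\Phi \notin \Bl{-\frac18+\eps}_{p,q}(\R^2)$ for finite $p$ --- indeed by Theorem~\ref{t:tight} the limit field does lie in $\Ccl{-\frac18-\delta}$, so the $\mcl C$-scale is exactly the borderline case and cannot be used to rule out the finite-$p$ spaces, which are strictly larger at the same regularity index. The correct reduction goes the other way: by Remark~\ref{r:easy.embed} (Jensen on a compact set), $\Bl{\al}_{p,q} \subset \Bl{\al}_{1,\infty}$ locally, so it suffices to show $\Phi \notin \Bl{-\frac18+\eps}_{1,\infty}(\R^2)$, i.e.\ to bound an $L^1$-in-$x$ quantity from below --- which is what the paper does, via the Hairer--Labb\'e characterization (Definition~\ref{def.hala}, Lemma~\ref{p.equiv-general}) with $p=1$.

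Two further points. First, the step you yourself flag as the main obstacle (upgrading a single-point, single-scale second-moment bound to a sup over a grid of points and a sparse sequence of scales, requiring decorrelation of wavelet coefficients and four-point function control) is left unproved, and it is precisely what the paper's route avoids: testing against a \emph{non-negative} bump $\eta$ and integrating $|\langle \Phi_a,\eta_{\lambda,x}\rangle|$ in $x$ over a fixed ball gives a quantity whose first and second moments are controlled by Proposition~\ref{p:garbound} and Lemma~\ref{l:garbound-converse} alone, so two applications of the Paley--Zygmund inequality at a \emph{single} scale $\lambda$ yield an event of probability bounded below uniformly in $\lambda$, and letting $\lambda \to 0$ inside the sup defining the $\B^{\al}_{1,\infty}$ norm finishes the argument with no multi-scale or independence input. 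Second, your choice of a vanishing-mean wavelet $\psi$ undermines the second-moment \emph{lower} bound: two-sided Onsager-type bounds $c\,\mathrm{d}(y,y')^{-1/4} \le \E[\sigma_y\sigma_{y'}] \le C\,\mathrm{d}(y,y')^{-1/4}$ do not control a signed double sum from below (one would need exact asymptotics of the two-point function), whereas with a non-negative test function positivity of the correlations makes the lower bound immediate. So even the $p=\infty$ case of your plan is not complete as written.
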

It was shown recently that there exists a unique limit point to the family $(\Phi_a)_{a \in (0,1]}$, see \cite{cgn1,chi}. Theorem~\ref{t.Ising.converse} makes it clear that this limit is singular (even on compact subsets) with respect to every $P(\varphi)$ Euclidean field theory, since the latter fields take values in $\Bl{-\eps}_{p,q}(\R^2)$ for every $\eps > 0$ and $p,q \in [1,\infty]$.

\smallskip

The paper is organized as follows. In Section~\ref{s:tightcrit}, we review some properties of wavelets and Besov spaces on $\R^d$, define local Besov spaces, and state and prove the tightness criterion in Theorem~\ref{t:tight}, which is a generalization of Theorem~\ref{t.easy.tightness} above. We also provide a version of Kolmogorov's continuity theorem for local Besov spaces in Proposition~\ref{p:kolmotype}. We then turn to the Ising model in Section~\ref{s:isingapp}. After recalling some classical facts about this model, we prove Theorems~\ref{t.Ising} and \ref{t.Ising.converse}.
Appendix~\ref{a:notselfcont} contains some functional analysis results which we needed to prove the tightness criterion in the general Besov space $\Bl{\al}_{p,q}(U)$.

\section{Tightness criterion} \label{s:tightcrit}

We begin by introducing some general notation.  If $u = (u_n)_{n \in I}$ is a family of real numbers indexed by a countable set $I$, and $p \in [1,\infty]$, we write
$$
\|u\|_{\ell^p} = \Ll( \sum_{n \in I} |u_n|^p \Rr)^{1/p},
$$
with the usual interpretation as a supremum when $p = \infty$. We write $B(x,R)$ for the open Euclidean ball centred at $x$ and of radius $R$. For every open set $U \subset \R^d$ and $r \in \N \cup \{\infty\}$, we write $C^r(U)$ to denote the set of $r$ times continuously differentiable functions on $U$, and $C^r_c(U)$ the subset of $C^r(U)$ of functions with compact support. We simply write $C^r$ and $C^r_c$ for $C^r(\R^d)$ and $C^r_c(\R^d)$ respectively. For $f \in C^r$, we write
\begin{equation*}  
\|f\|_{C^r} := \sum_{|i| \le r} \|\dr_i f\|_{L^\infty},
\end{equation*}
where the sum is over multi-indices $i \in \N^d$.

We define the H\"older space of exponent $\al < 0$ very similarly to \cite[Definition~3.7]{struc}.
\begin{defi}[Besov-Hölder spaces]
\label{def.Besov}
Let $\al < 0$, $r_0 := - \lfloor \al \rfloor$, and 
\begin{equation*}  
\mathscr{B}^{r_0} := \{ \eta \in C^{r_0} \ : \ \|\eta\|_{C^{r_0}} \le 1 \text{ and } \supp \eta \subset B(0,1) \}.
\end{equation*}
For every $f \in C^\infty_c$, denote
\begin{equation}
\label{e:def.Cal}
\|f\|_{\C^\al} := \sup_{\lambda \in (0,1]} \, \sup_{x \in \Rd} \, \sup_{\eta \in \mathscr{B}^{r_0}}  \lambda^{-\al} \int_\Rd f \, \lambda^{-d} \eta \Ll( \frac{\cdot - x}{\lambda} \Rr) .
\end{equation}
The \emph{H\"older space} $\C^\al$ is the completion of $C^\infty_c$ with respect to the norm $\|\cdot\|_{\C^\al}$. For every open set $U \subset \Rd$, the \emph{local H\"older space} $\Cl(U)$ is the completion of $C^\infty_c$ with respect to the family of seminorms
\begin{equation*}  
f \mapsto \|\chi f\|_{\C^\al},
\end{equation*}
where $\chi$ ranges in $C^\infty_c(U)$.
\end{defi}
\begin{rem}
\label{r.embed}
By definition, an element of $\C^\al$ defines a continuous mapping on 
\begin{equation*}  
\{ \eta(\cdot - x) \in C^{r_0} \ : \ x \in \Rd,  \ \|\eta\|_{C^{r_0}} \le 1 \text{ and } \supp \eta \subset B(0,1) \}
\end{equation*}
and taking values in $\R$. It is straightforward to extend this mapping to a linear form on $C^{r_0}_c$. In particular, we may and will think of $\C^\al$ as a subset of the dual of $C^\infty_c$. Similarly, the space $\Cl(U)$ can be seen as a subset of the dual of $C^\infty_c(U)$.
\end{rem}
\begin{rem}
\label{r.separ}
Our definition of $\C^\al$ (and similarly for $\Cl$) departs slightly from the more common one consisting of considering all distributions $f$ such that $\|f\|_{\C^\al}$ is finite. The present definition has the advantage of making the space $\C^\al$ separable.
\end{rem}
\begin{rem}
As will be seen shortly, the topology of $\Cl$ is metrisable.
\end{rem}

The gist of the tightness criterion we want to prove is that it suffices to check a condition of the form of \eqref{e:def.Cal} for a \emph{finite} number of test functions. As announced in the introduction, these test functions are chosen as the basis of a wavelet analysis. We now recall this notion.




\begin{defi} \label{def:multi}
A \emph{multiresolution analysis} of $L^2(\R^d)$ is an increasing sequence $(V_n)_{n \in \Z}$ of subspaces of $L^2(\R^d)$, together with a \emph{scaling function} $\phi \in L^2(\R^d)$, such that 
\begin{itemize}
\item $\bigcup_{n \in \Z} V_n$ is dense in $L^2(\R^d)$, $ \quad \bigcap_{n \in \Z} V_n = \{0\}$;
\item $f \in V_n$ if and only if $f(2^{-n} \,  \cdot \, ) \in V_0$;
\item $(\phi(\, \cdot \,  - k))_{k \in \Z^d}$ is an orthonormal basis of $V_0$.
\end{itemize}
\end{defi}

\begin{defi}\label{def:reg-multi}

A multiresolution analysis is called $r$-\emph{regular} ($r \in \N$) if its scaling function $\phi$ can be chosen in such a way that 
$$
| \partial^k \phi (x) | \leqslant C_m (1+ |x |)^{-m}
$$
for every integer $m $ and for every multi-index $k \in \N^d$ with $| k | \leqslant r$.
\end{defi}

While a given sequence $(V_n)$ can be associated with several different scaling functions to form a multiresolution analysis, a multiresolution analysis is entirely determined by the knowledge of its scaling function. We denote by $W_n$ the orthogonal complement of $V_n$ in $V_{n+1}$.

\begin{thm}[compactly supported wavelets]
	\label{t:compact_wave}
	For every positive integer $r$, there exist $\phi$, $(\psii)_{1 \le i < 2^d}$ such that 
	\begin{itemize}
		\item $\phi, (\psii)_{i < 2^d}$ all belong to $C^r_c$;
		\item $\phi$ is the scaling function of a multiresolution analysis $(V_n)$;
		\item $(\psii(\, \cdot \, - k))_{i < 2^d, k \in \Z^d}$ is an orthonormal basis of $W_0$.
	\end{itemize}
	
\end{thm}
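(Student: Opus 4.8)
The plan is to reduce the $d$-dimensional statement to the one-dimensional case by a tensor-product construction, and to obtain the one-dimensional case from Daubechies' classical construction of compactly supported wavelets. First I would recall that for each positive integer $r$ there exists, by Daubechies' theorem, a scaling function $\varphi \in C^r(\R)$ with compact support generating a multiresolution analysis $(V_n^{(1)})_{n \in \Z}$ of $L^2(\R)$, together with an associated mother wavelet $\psi \in C^r(\R)$, also compactly supported, such that $(\psi(\,\cdot\, - k))_{k \in \Z}$ is an orthonormal basis of the orthogonal complement $W_0^{(1)}$ of $V_0^{(1)}$ in $V_1^{(1)}$. (Here one uses that the smoothness of the Daubechies wavelets increases linearly with the number of vanishing moments, so any prescribed finite regularity $r$ can be achieved.) This input is exactly the "existence of wavelets with compact support" that the paper explicitly declines to reprove.

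Next I would perform the standard tensorisation. Set $\phi(x_1,\dots,x_d) := \prod_{j=1}^d \varphi(x_j)$ and $V_n := \bigotimes_{j=1}^d V_n^{(1)}$, i.e.\ the $L^2(\Rd)$-closure of the span of $d$-fold tensor products of elements of $V_n^{(1)}$. One checks directly that $(V_n)_{n\in\Z}$ is increasing, that $\bigcup_n V_n$ is dense and $\bigcap_n V_n = \{0\}$ (these follow coordinatewise from the one-dimensional properties), that $f \in V_n \iff f(2^{-n}\cdot) \in V_0$, and that $(\phi(\,\cdot\, - k))_{k \in \Z^d}$ is an orthonormal basis of $V_0$; hence $\phi$ is the scaling function of a multiresolution analysis. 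Moreover $\phi \in C^r_c(\Rd)$ since each factor is in $C^r_c(\R)$ and $\supp \phi \subset (\supp\varphi)^d$.

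For the wavelets, I would use the fact that $V_1^{(1)} = V_0^{(1)} \oplus W_0^{(1)}$ in each coordinate, so that
\[
V_1 = \bigotimes_{j=1}^d \bigl(V_0^{(1)} \oplus W_0^{(1)}\bigr) = \bigoplus_{\eps \in \{0,1\}^d} \Bigl(\bigotimes_{j=1}^d U_{\eps_j}\Bigr), \qquad U_0 := V_0^{(1)},\ U_1 := W_0^{(1)},
\]
and therefore $W_0$, the orthogonal complement of $V_0 = \bigotimes_j U_0$ in $V_1$, is the direct sum of the $2^d - 1$ summands indexed by $\eps \neq 0$. For each such $\eps$ — there are exactly $2^d - 1$ of them, which I index by $i \in \{1,\dots,2^d-1\}$ — I set $\psi^{(i)}(x_1,\dots,x_d) := \prod_{j : \eps_j = 0} \varphi(x_j) \prod_{j : \eps_j = 1} \psi(x_j)$. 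Then $(\psi^{(i)}(\,\cdot\, - k))_{k \in \Z^d}$ is an orthonormal basis of the corresponding summand (orthonormality is inherited from the one-dimensional bases, completeness from the tensor-product structure of Hilbert-space orthonormal bases), so taking the union over $i$ yields an orthonormal basis of $W_0$; and each $\psi^{(i)} \in C^r_c(\Rd)$ for the same reason as $\phi$.

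I do not anticipate a genuine obstacle here: the only non-elementary ingredient, the one-dimensional Daubechies construction, is quoted, and the tensorisation is a routine — if slightly bookkeeping-heavy — verification. The one point that requires a little care is checking that the tensor product of orthonormal bases of the $W_0^{(1)}$ (and $V_0^{(1)}$) factors genuinely spans all of $V_1$, i.e.\ that no part of $W_0$ is missed; this is where one invokes that the algebraic tensor product of orthonormal bases of Hilbert spaces is an orthonormal basis of the Hilbert-space tensor product, applied finitely many times. Consequently I would keep the write-up brief, citing a standard reference (e.g.\ Meyer or Daubechies) for the one-dimensional statement and for the tensor-product multiresolution analysis, and spelling out only the identification of the $\psi^{(i)}$ and the count $2^d - 1$.
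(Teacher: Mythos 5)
Your proposal is correct and follows the same route the paper takes: the paper simply cites Daubechies' one-dimensional construction and remarks that the wavelet basis on $\Rd$ is obtained by taking coordinatewise products, which is exactly the tensorisation you spell out. Your write-up merely makes explicit the routine verification that the paper leaves to the references.
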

This result is due to \cite{dau} (see also e.g.\ \cite[Chapter~6]{pin}). We recall that a wavelet basis on $\Rd$ can be constructed from one on $\R$ by taking products of wavelet functions for each coordinate. We also recall from \cite[Theorem~2.6.4]{mey} that for every multi-index $\be \in \N^d$ such that $|\be| < r$ and every $i < 2^d$, we have
\begin{equation}
\label{e:vanish.moment}
\int x^\be \psii(x) \, \d x = 0.
\end{equation}
Except for Theorem~\ref{t:compact_wave} and \eqref{e:vanish.moment}, we will give a self-contained proof of the tightness criterion in $\Cl(U)$.
From now on, we fix both $r \in \N$ and a wavelet basis  $\phi, (\psii)_{i < 2^d} \in C^r_c$, as obtained with Theorem~\ref{t:compact_wave}.
%
Let $R$ be such that
\begin{equation}
	\label{e:def:R}
	\supp \phi \subset B(0,R), \qquad \supp \psii \subset B(0,R) \quad (i < 2^d).
\end{equation}
For any $n \in \Z$ and $x \in \R^d$, if we define
\begin{equation}
	\label{e:def:scaling}
\phi\n(y) := 2^{dn/2} \, \phi(2^{n}(y-x))
\end{equation}
and $\Ln = \Z^d/2^n$, then $(\phi\n)_{x \in \Ln}$ is an orthonormal basis of $V_n$. Similarly, we define 
\begin{equation*}
\psi\ni(y) := 2^{dn/2} \, \psi^{(i)}(2^{n}(y-x)),
\end{equation*}
so that $(\psii\n)_{i < 2^d, x \in \Ln, n \in \Z}$ is an orthonormal basis of $L^2(\R^d)$. For $f \in L^2(\R^d)$, we set
\begin{equation}
\label{e:def:vw}
v\n f  := (f, \phi\n), \qquad w\ni f := (f,\psi\ni),
\end{equation}
where $(\, \cdot \, , \, \cdot \,)$ is the scalar product of $L^2(\R^d)$. 
Denoting by $\V_n$ and $\W_n$ the orthogonal projections on $V_n$, $W_n$ respectively, we have
\begin{equation}
\label{e:def:VW}
\V_n f = \sum_{x \in \Ln} v\n(f) \, \phi\n, \qquad \W_n f = \sum_{{i < 2^d, x \in \Ln}} w\ni (f) \, \psii\n,
\end{equation}
and for every $k \in \Z$,
\begin{equation}
\label{e:decompf}
f = \V_{k} f + \sum_{n = k}^{+\infty} \W_n f 
\end{equation}
in $L^2(\R^d)$.
 
\begin{defi}[Besov spaces]
	\label{def:Besov:wave}
	Let $\alpha \in \R$, $|\alpha| < r$ and $p,q \in [1,\infty]$. The \emph{Besov space} $\B^{\alpha}_{p,q}$ is the completion of $C^\infty_c$ with respect to the norm
	\begin{equation}
	\label{e:def:Besov}
	\|f\|_{\B^{\alpha}_{p,q}} := 
	\|\V_0 f\|_{L^p} + \Ll\| \Ll( 2^{\al n} \|\W_n f\|_{L^p}  \Rr)_{n \in \N}  \Rr\|_{\ell^q}.
	\end{equation}
	The \emph{local Besov space} $\Bbl(U)$ is the completion of $C^\infty(U)$ with respect to the family of semi-norms 
	$$
	f \mapsto \|\chi f\|_{\Bb}
	$$
	indexed by $\chi \in C^\infty_c(U)$.
	%
	
\end{defi}

\begin{rem}
This characterization of Besov spaces is the most useful for us to obtain a tightness result for local domains, as stressed in the Introduction, due to its projection on compactly supported wavelets. However, in Appendix~\ref{a:notselfcont} we outline a proof of the equivalence between Definition~\ref{def.Besov} and one based on the Littlewood-Paley decomposition (see Definition~\ref{a:def:litpaley}). This second definition is the one used in \cite{bcd} to prove a large number of results on the spaces $\B^{\alpha}_{p,q} $, including their relation with other well-known function spaces.
\end{rem}

\begin{rem}
\label{r:separ}
Similarly to the observation of Remark~\ref{r.separ}, our definition of $\Bb$ departs slightly from the usual one, which consists in considering the set of distributions such that \eqref{e:def:Besov} is finite. The two definitions coincide only when both $p$ and $q$ are finite. The present definition has the advantage of making the space separable in every case, by taking the closure of a family of smooth compactly supported functions. On the other hand, for $\al \in (0,1)$, the more standard definition of the space $\B^{\alpha}_{\infty,\infty}$ would coincide with the Hölder space of regularity $\alpha$ (see Appendix~\ref{a:notselfcont} and \cite{bcd}), which is not separable. 
\end{rem}

\begin{rem}\label{r:def:indep}
One can check that the space $\Bb$ of Definition~\ref{def:Besov:wave} does not depend on the choice of the multiresolution analysis, in the sense that for any $r > | \alpha |$, any different $r$-regular multiresolution analysis yields an equivalent norm (see Proposition~\ref{a:p:equiv} of the appendix). In this section, we recall that we fix $r \in \N$, and consider Besov spaces $\Bb$ with $\alpha \in \R$, $| \alpha | < r$.
\end{rem}

\begin{rem}
\label{r:easy.embed}
It is clear that if $\al_1 \le \al_2 \in \R$ and $q_1 \ge q_2 \in [1,\infty]$, then 
\begin{equation*}  
\|f\|_{\B^{\al_1}_{p,q_1}} \le C \|f\|_{\B^{\al_2}_{p,q_2}},
\end{equation*}
where $C$ is independent of $f \in C^\infty_c$. In particular, the space $\B^{\al_2}_{p,q_2}$ is continuously embedded in $\B^{\al_1}_{p,q_1}$. Similarly, for $p_1 \le p_2$ and for a given $\chi \in C^\infty_c$, there exists a constant $C < \infty$ such that for every $f \in C^\infty_c$,
\begin{equation*}  
\|\chi f\|_{\B^{\al}_{p_1,q}} \le C \|\chi f\|_{\B^{\al}_{p_2,q}}.
\end{equation*}
Indeed, this is a consequence of Jensen's inequality and the fact that for each $n \in \N$, the support of $\W_n(\chi f)$ is contained in the bounded set $2R + \supp \chi$. Hence, the space $\Bl{\al}_{p_2,q}(U)$ is continuously embedded in $\Bl{\al}_{p_1,q}(U)$.
\end{rem}
\begin{rem}  
A different notion of H\"older space on a domain, encoding more precise weighted information on the size of the distribution as one gets closer and closer to the boundary of the domain, has been introduced in the very recent work \cite{gh}. 
\end{rem}

The finiteness of $\|f\|_\Bb$ can be expressed in terms of the magnitude of the coefficients $v\n(f)$ and $w\ni(f)$. 
\begin{prop}[Besov spaces via wavelet coefficients]
	\label{p:charact-coefs}
	For every $p \in [1,\infty]$, there exists $C  \in (0,\infty)$ such that for every $f \in C^\infty_c$ and every $n \in \Z$,
\begin{equation}
	\label{e:comp_coeff}
	C^{-1} \  \|\V_n f\|_{L^p} \le 2^{dn \Ll( \frac12 - \frac1p \Rr) } \Ll\| \Ll( v\n f\Rr)_{x \in \Ln} \Rr\|_{\ell^p} \le C \ \|\V_n f\|_{L^p},
\end{equation}
\begin{equation}
	\label{e:comp_coeff-2}
	C^{-1} \  \|\W_n f\|_{L^p} \le 2^{dn \Ll( \frac12 - \frac1p \Rr) } \Ll\| \Ll( w\ni f\Rr)_{i < 2^d, x \in \Ln} \Rr\|_{\ell^p} \le C \ \|\W_n f\|_{L^p}.
\end{equation}
\end{prop}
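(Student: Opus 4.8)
The plan is to prove the two inequalities in \eqref{e:comp_coeff} separately, the corresponding bounds in \eqref{e:comp_coeff-2} following in exactly the same way once one carries along the extra finite index $i < 2^d$. Two elementary facts, both uniform in $n \in \Z$, drive the argument. First, a \emph{dilation identity}: for any $p \in [1,\infty]$ and any $x$, the change of variables $z = 2^n(y-x)$ gives $\|\phi\n\|_{L^p} = 2^{dn(1/2 - 1/p)}\|\phi\|_{L^p}$, and likewise with $\phi$ replaced by $\psii$; here $\|\phi\|_{L^p}, \|\psii\|_{L^p} < \infty$ for all $p$ since these functions lie in $C^r_c$. Second, a \emph{bounded overlap} property: since $\supp \phi\n \subset B(x, R 2^{-n})$ with $x$ running over $\Ln = \Z^d/2^n$, there is an integer $N = N(d,R)$, independent of $n$, such that every point of $\Rd$ belongs to at most $N$ of the balls $B(x, R2^{-n})$, $x \in \Ln$. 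Everything in sight is finite: for $f \in C^\infty_c$ only finitely many coefficients $v\n(f)$ are nonzero, so $\V_n f$ is a finite combination of elements of $C^r_c$.

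For the left inequality in \eqref{e:comp_coeff}, I would bound $|\V_n f(y)| \le \sum_x |v\n(f)|\,|\phi\n(y)|$, where for fixed $y$ at most $N$ terms are nonzero; Hölder's inequality applied to this finite sum gives $|\V_n f(y)|^p \le N^{p-1}\sum_x |v\n(f)|^p\,|\phi\n(y)|^p$, and integrating in $y$ together with the dilation identity yields $\|\V_n f\|_{L^p} \le N^{1-1/p}\|\phi\|_{L^p}\, 2^{dn(1/2 - 1/p)}\,\|(v\n f)_{x \in \Ln}\|_{\ell^p}$, with the usual reading when $p \in \{1,\infty\}$. This is the claimed bound with $C = N^{1-1/p}\|\phi\|_{L^p}$.

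For the right inequality, the key point is the reproducing identity $v\n(f) = (f,\phi\n) = (\V_n f, \phi\n)$, valid because $\phi\n \in V_n$ and $\V_n$ is the orthogonal projection onto $V_n$. Hölder's inequality on the ball $B(x, R2^{-n})$ then gives, with $p' = p/(p-1)$ the conjugate exponent, $|v\n(f)| \le \|\V_n f\|_{L^p(B(x,R2^{-n}))}\,\|\phi\n\|_{L^{p'}}$, while the dilation identity gives $\|\phi\n\|_{L^{p'}} = 2^{dn(1/p - 1/2)}\|\phi\|_{L^{p'}}$. Raising to the power $p$, summing over $x \in \Ln$, and using bounded overlap in the form $\sum_{x \in \Ln}\|\V_n f\|_{L^p(B(x,R2^{-n}))}^p = \int_{\Rd} |\V_n f(y)|^p\,\#\{x \in \Ln : y \in B(x,R2^{-n})\}\,\d y \le N\|\V_n f\|_{L^p}^p$, I would obtain $2^{dn(1/2 - 1/p)}\|(v\n f)_{x \in \Ln}\|_{\ell^p} \le N^{1/p}\|\phi\|_{L^{p'}}\,\|\V_n f\|_{L^p}$, again with the obvious modifications when $p \in \{1,\infty\}$.

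I do not expect a serious obstacle. The only matters requiring attention are keeping the scaling exponent $dn(1/2 - 1/p)$ (and its conjugate $dn(1/p - 1/2)$) straight, and the degenerate endpoint exponents $p \in \{1,\infty\}$ where Hölder's inequality trivializes. The one mildly non-obvious ingredient is to use the reproducing identity rather than Bessel's inequality for the lower bound on the coefficients, the latter giving only the case $p = 2$.
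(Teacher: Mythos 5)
Your argument is correct and is essentially the paper's own proof: the upper bound via the bounded overlap of the supports of the $\phi\n$ together with convexity (your Hölder step on the finite sum is the paper's Jensen step), and the lower bound via the reproducing identity $v\n f = (\V_n f,\phi\n)$, Hölder on $B(x,R2^{-n})$, the overlap count, and the dilation identity $\|\phi\n\|_{L^{p'}} = 2^{dn(1/p-1/2)}\|\phi\|_{L^{p'}}$, with the same separate treatment of the endpoints $p\in\{1,\infty\}$. No gaps.
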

\begin{proof} 
We will prove only \eqref{e:comp_coeff} in detail, since \eqref{e:comp_coeff-2} follows in the same way. (See also \cite[Proposition~6.10.7]{mey}.)
Recalling \eqref{e:def:R}, we have $\supp \phi_{n,x} \subseteq B(x,2^{-n}R)$ and thus, for every $y \in \R$,
\begin{equation}
\V_n f (y) = \sum_{x \in \Lambda_n, x \in B(y,2^{-n}R)}v_{n,x}(f) \phi_{n,x}(y).
\end{equation}
Let $p<+\infty$. 
Since the sum $\sum_{x \in \Lambda_n, x \in B(y,2^{-n}R)}$ is finite uniformly over $n$, we can use Jensen's inequality to obtain:
\begin{align*}
\| \V_n f \|_{L^p}^p  = & \int \left| \sum_{x \in \Lambda_n, x \in B(y,2^{-n}R)}v_{n,x}(f) \phi_{n,x}(y) \right|^p \mathrm{d}y \\
\lesssim & \int  \sum_{x \in \Lambda_n, x \in B(y,2^{-n}R)} \left| v_{n,x}(f) \phi_{n,x}(y) \right|^p \mathrm{d}y \\
\lesssim & \sum_{x \in \Lambda_n} \left| v_{n,x}(f) \right|^p \int_{B(x,2^{-n} R)} \left| \phi_{n,x}(y) \right|^p \mathrm{d}y \\
\lesssim  & \Ll\| \Ll( v\n f\Rr)_{x \in \Ln} \Rr\|_{\ell^p}^p \, \|\phi_{n,0}\|_{L^p}^p .
\end{align*}
The leftmost inequality of \eqref{e:comp_coeff} follows from the scaling properties of $\phi_{n,0}$, namely:
\begin{equation} \label{e:phiscaling}
\| \phi_{n,x} \|_{L^p} = 2^{dn \Ll( \frac12 - \frac1p \Rr)} \| \phi_{0,x} \|_{L^p}.
\end{equation}
For $p = +\infty$ we estimate $\|\V_n f\|_{L^\infty}$ using
\begin{equation*}
| \V_n f (y) | \lesssim R^d \sup_{x \in \Lambda_n} | v\n f| |\phi_{n,x}(y)| \lesssim \|\phi_{n,0}(y)\|_{L^{\infty}} \sup_{x \in \Lambda_n} | v\n f|  .
\end{equation*}
This yields the upper bound for $\|\V_n f\|_{L^p}$.

\smallskip

As for the rightmost inequality, notice that $v_{n,x}(\V_n f)=v_{n,x}f$, that is, $v_{n,x}f=\int \phi_{n,x}(y) \V_nf(y) \d y$. Let $p < +\infty$ and $p'$ be its conjugate exponent. By Hölder's inequality, 
\begin{equation*}
|v_{n,x}f| \leqslant \| \phi_{n,x} \|_{L^{p'}} \| \V_nf  \mathbbm{1}_{B(x,2^{-n}R)} \|_{L^{p}} ,
\end{equation*}
and moreover,
\begin{equation*}
\sum_{x \in \Lambda_n} \int | \V_n f (y) |^p \mathbbm{1}_{B(x,2^{-n}R)} (y) \d y = \int | \V_n f (y) |^p \sum_{x \in \Lambda_n} \mathbbm{1}_{B(x,2^{-n}R)} (y) \d y \lesssim \| \V_n f \|_{L^p}^p .
\end{equation*}
By \eqref{e:phiscaling}, we have $\| \phi_{n,x} \|_{L^{p'}} \lesssim 2^{dn (\frac{1}{2}-\frac{1}{p'})} = 2^{- dn \Ll( \frac12 - \frac1p \Rr)}$, and this concludes the proof for the case $p<+\infty$. For $p = +\infty$, we just notice that $
|v_{n,x}f| \leqslant \|\V_n f\|_{L^\infty} \| \phi_{n,x} \|_{L^1}
$. 
\end{proof}

\begin{rem}
For each $k \in \Z$, the norm
\begin{equation*}
\| f \|_{\B^{\al,k}_{p,q}} = \Ll\| \Ll( v_{k,x} f\Rr)_{x \in \Lambda_k} \Rr\|_{\ell^p} + \Ll\| \left( 2^{\alpha n} 2^{dn \Ll( \frac12 - \frac1p \Rr) } \Ll\| \Ll( w\ni f\Rr)_{i < 2^d, x \in \Ln} \Rr\|_{\ell^p} \right)_{n \geqslant k} \Rr\|_{\ell^q}
\end{equation*}
is equivalent to that in \eqref{e:def:Besov}. This is easy to show using Proposition~\ref{p:charact-coefs} and the definition of multiresolution analysis.
\end{rem}


As we now show, for $\al < 0$, the Besov space $\B^{\al}_{\infty,\infty}$ of Definition~\ref{def:Besov:wave} coincides with the Besov-Hölder space $\C^{\alpha}$ given by Definition~\ref{def.Besov}.

\begin{prop}
\label{p.equiv}
Let $\alpha < 0$. There exist $C_1,C_2 \in (0,\infty)$ such that for every $f \in C^\infty_c$, we have
\begin{equation}
\label{e:comp1}
C_1 \|f\|_{\C^\al} \le \|f\|_{\B^{\al}_{\infty,\infty}} \le C_2 \|f\|_{\C^\al}.
\end{equation}
\end{prop}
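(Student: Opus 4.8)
The plan is to establish the two-sided bound \eqref{e:comp1} by comparing the wavelet norm $\|f\|_{\B^\al_{\infty,\infty}} = \|\V_0 f\|_{L^\infty} + \sup_{n \ge 0} 2^{\al n}\|\W_n f\|_{L^\infty}$ with the ``physical'' H\"older norm $\|f\|_{\C^\al}$, which tests $f$ against rescaled bumps $\eta\in\mathscr{B}^{r_0}$, where $r_0 = -\lfloor\al\rfloor$. By Proposition~\ref{p:charact-coefs} with $p=\infty$, the wavelet norm is comparable to $\sup_{x\in\Lambda_0}|v_{0,x}f| + \sup_{n\ge0} 2^{\al n} 2^{dn/2}\sup_{i<2^d,\,x\in\Lambda_n}|w^{(i)}_{n,x}f|$; since the (normalized) scaling function $\phi(\cdot-x)$ and the wavelets $2^{dn/2}\psi^{(i)}(2^n(\cdot-x))$ are, up to a fixed scalar, exactly test functions of the form $\lambda^{-d}\eta((\cdot-x)/\lambda)$ with $\lambda=1$ or $\lambda=2^{-n}$ and $\eta$ a fixed multiple of $\phi$ or $\psi^{(i)}$ (rescaled to fit in $B(0,1)$ and normalized in $C^{r_0}$ — recall $\phi,\psi^{(i)}\in C^r_c$ with $r>|\al|\ge r_0$), the upper bound $\|f\|_{\B^\al_{\infty,\infty}}\le C_2\|f\|_{\C^\al}$ follows essentially immediately by testing the definition \eqref{e:def.Cal} of $\|f\|_{\C^\al}$ against these particular $\eta$'s. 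One only has to check $\supp\phi,\supp\psi^{(i)}\subset B(0,1)$ can be arranged by an initial dilation, or else absorb the radius $R$ of \eqref{e:def:R} into constants by a covering argument; this is routine.

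For the lower bound $C_1\|f\|_{\C^\al}\le\|f\|_{\B^\al_{\infty,\infty}}$, the strategy is the standard one: fix $\lambda\in(0,1]$, $x\in\Rd$, and $\eta\in\mathscr{B}^{r_0}$, and estimate $\lambda^{-d}\langle f,\eta((\cdot-x)/\lambda)\rangle$ using the wavelet decomposition \eqref{e:decompf}, $f = \V_0 f + \sum_{n\ge0}\W_n f$. Write $\lambda^{-\al}\lambda^{-d}\langle f,\eta((\cdot-x)/\lambda)\rangle = \lambda^{-\al}\lambda^{-d}\big[\langle \V_0 f,\eta_{\lambda,x}\rangle + \sum_{n\ge0}\langle\W_n f,\eta_{\lambda,x}\rangle\big]$ where $\eta_{\lambda,x} := \eta((\cdot-x)/\lambda)$. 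Bound each term by $\|\W_n f\|_{L^\infty}$ (resp.\ $\|\V_0 f\|_{L^\infty}$) times $\|\eta_{\lambda,x}\|_{L^1} = \lambda^d\|\eta\|_{L^1}\le\lambda^d|B(0,1)|$ for the ``low-frequency'' scales $2^{-n}\ge\lambda$, giving a contribution $\lesssim \sum_{n:2^{-n}\ge\lambda} 2^{-\al n}\cdot\lambda^{-\al}\lesssim 1$ since $\al<0$ makes the geometric series $\sum_{2^{-n}\ge\lambda}2^{-\al n}\lesssim\lambda^{\al}$ converge. For the ``high-frequency'' scales $2^{-n}<\lambda$, the $L^1$ pairing bound is too crude and one must exploit the vanishing moments \eqref{e:vanish.moment}: since $\int x^\be\psi^{(i)}(x)\,\d x=0$ for $|\be|<r$, and $r>r_0=-\lfloor\al\rfloor\ge\lceil-\al\rceil\ge\lceil|\al|\rceil$, a Taylor expansion of $\eta_{\lambda,x}$ to order $r_0$ against $\psi^{(i)}_{n,y}$ gains a factor $(2^{-n}/\lambda)^{r_0}$ per pairing, and summing over the $O((\lambda 2^n)^d)$ wavelets $y\in\Lambda_n$ meeting $\supp\eta_{\lambda,x}$ yields $|\langle\W_n f,\eta_{\lambda,x}\rangle|\lesssim \|\W_n f\|_{L^\infty}\,(\lambda 2^n)^d\,2^{-n}(2^{-n}/\lambda)^{r_0}\cdot\|\eta\|_{C^{r_0}}\lambda^{-d}$ or similar; multiplying by $\lambda^{-\al}\lambda^{-d}$ and summing the geometric series $\sum_{2^{-n}<\lambda}2^{\al n}(2^{-n}/\lambda)^{r_0}\lambda^{-\al}$ converges because $r_0 + \al > 0$ by choice of $r_0$. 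Collecting both regimes gives $\lambda^{-\al}\lambda^{-d}|\langle f,\eta_{\lambda,x}\rangle|\lesssim\|f\|_{\B^\al_{\infty,\infty}}$ uniformly in $\lambda,x,\eta$, which is the claim.

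The main obstacle is the high-frequency estimate: one must carefully set up the Taylor remainder bound for $\langle\W_n f,\eta_{\lambda,x}\rangle$, tracking how many wavelets at scale $2^{-n}$ overlap the support (of diameter $\sim\lambda$) of $\eta_{\lambda,x}$, the correct power of $2^{-n}$ from the $L^2$-normalization versus $L^\infty$-normalization of $\psi^{(i)}_{n,x}$, and checking that the exponent arithmetic — namely $r_0 = -\lfloor\al\rfloor$ gives both $r_0 > |\al|$ (so enough vanishing moments are available, using $r > r_0$) and $r_0 + \al > 0$ (so the high-frequency series converges) — works out. I would first record the elementary inequality $|\langle\W_n f,\varphi\rangle|\lesssim 2^{-n(r_0 + d/2)}\|D^{r_0}\varphi\|_{L^\infty}\cdot(\text{number of overlapping wavelets})$ as a lemma-free paragraph, then feed in $\varphi=\eta_{\lambda,x}$ with $\|D^{r_0}\eta_{\lambda,x}\|_{L^\infty}\le\lambda^{-r_0}\|\eta\|_{C^{r_0}}\le\lambda^{-r_0}$, and finally do the two geometric sums. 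The rest (the upper bound, the reduction via Proposition~\ref{p:charact-coefs}, the support normalization) is bookkeeping.
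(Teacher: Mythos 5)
Your proposal follows essentially the same route as the paper's proof: the upper bound is obtained by testing $f$ against the (suitably rescaled and $C^{r_0}$-normalized) wavelets themselves, and the lower bound by expanding $\langle f,\eta_{\lambda,x}\rangle$ along $\V_0 f+\sum_n \W_n f$, using a crude bound at scales $2^{-n}\ge\lambda$ and the vanishing moments \eqref{e:vanish.moment} plus a Taylor expansion with the $(\lambda 2^n)^d$ overlap count at scales $2^{-n}<\lambda$, exactly as in the paper (your counting of overlapping wavelets is in fact more careful than the paper's ``$C2^{nd}$ terms''). The one caveat, which the paper's write-up shares (it asserts a gain $2^{-rn}\lambda^{-d-r}$ even though $\eta$ is only $C^{r_0}$), is the borderline case where $\al$ is a negative integer: there $r_0=-\lfloor\al\rfloor=|\al|$, so $r_0+\al=0$ and the fine-scale geometric series you invoke is not summable as stated; for non-integer $\al<0$ your exponent arithmetic is correct and the argument goes through.
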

\begin{proof}
The result is classical and proved e.g.\ in \cite[Proposition~3.20]{struc}. We recall the proof for the reader's convenience. One can check that there exists $C < \infty$ such that for every $f \in C^\infty_c$, $n \in \Z$ and $x \in \Rd$,
\begin{equation}  \label{e:comp11}
2^{\frac {dn}{2}} |w\ni f| \le C \|f\|_{\C^\al},
\end{equation}
and this yields the second inequality in \eqref{e:comp1}. Conversely, we let $f \in C^\infty$ satisfy $\|f\|_{\B^{\al}_{\infty,\infty}} \le 1$. We aim to show that there exists a constant $C < \infty$ (independent of~$f$) such that for every $y \in \Rd$, $\eta \in \mathscr{B}^{r_0}$ (with $r_0= - \lfloor \alpha \rfloor$) and $\lambda \in (0,1]$, we have
\begin{equation*}  
\lambda^{-\al} \int_\Rd f \, \lambda^{-d} \eta \Ll( \frac{\cdot - y}{\lambda} \Rr)  \le C.
\end{equation*}
We write $\eta_{\lambda,y} := \lambda^{-d} \eta((\cdot - y)/\lambda)$, and observe that
\begin{equation*}  
\int f \, \eta_{\lambda,y} = \sum_{x \in \Lambda_0} (v_{0,x} f)(v_{0,x} \eta_{\lambda,y}) + \sum_{i < 2^d} \sum_{n \ge 0}\sum_{x \in \Lambda_n} (w\ni f)(w\ni \eta_{\lambda,y}).
\end{equation*}
We consider only the second term of the sum above, as the first one can be obtained with the same technique. By the definition of $\|f\|_{\B^{\al}_{\infty,\infty}}$, for every $n \ge 0$, we have
\begin{equation}
\label{e:obv}
2^{\frac {dn} 2} |w\ni f| \le C 2^{-\al n}.
\end{equation}
In order for $w\ni \eta_{\lambda,y}$ to be non-zero, we must have $|x-y| \le C (\lambda \vee 2^{-n})$. Moreover, by a Taylor expansion of $\eta$ around $x$ and \eqref{e:vanish.moment}, we have
\begin{equation}
\label{e:taylor1}
2^{-n} \le \lambda \quad \implies \quad 2^{\frac {dn} 2}  |w\ni \eta_{\lambda,y} | \le C 2^{-r n}  \, \lambda^{-d-r},
\end{equation}
while 
\begin{equation}
\label{e:taylor11}
2^{-n} \ge \lambda \quad \implies \quad 2^{\frac {dn} 2} |w\ni \eta_{\lambda,y} | \le C  2^{dn}.
\end{equation}
and the same bound holds for $2^\frac{dn}{2}| v_{0,x} \eta_{\lambda,y} |$. 
For each $n \geqslant 0$, there exists a compact set $K_n \subset \Lambda_n$ independent from $f$ such that the condition $w\ni \eta_{\lambda,y} \neq 0$ implies that $x \in K_n$. Since the sum over $x \in \Lambda_n \cap K_n$ has less than $C 2^{nd}$ terms, the result follows.
\end{proof}

\begin{rem}
Notice that we can replace $r_0 = - \lfloor \alpha \rfloor$ by a generic integer $r > | \alpha |$ in Definition \ref{def.Besov}, obtaining an equivalent norm. Indeed, Proposition~\ref{p.equiv} shows that it suffices to control the behavior of $f$ against shifted and rescaled versions of the wavelet functions $\phi$ and $\psii$.
\end{rem}

\begin{rem}
\label{r.really.holder}
In view of Proposition~\ref{p.equiv}, when $\alpha < 0$, we have $\C^\al = \B^\al_{\infty,\infty}$, and $\Cl(U) = \Bl{\al}_{\infty,\infty}(U)$ where $\C^\al $ and $\Cl(U)$ are given by Definition~\ref{def.Besov}. By extension, we set 
$$
\C^\al := \B^\al_{\infty,\infty} \quad \text{ and } \quad \Cl(U) := \Bl{\al}_{\infty,\infty}(U)
$$ 
for every $\al \in \R$. Although we will not use this fact here, note that for $\al \in (0,1)$, there exists a constant $C < \infty$ such that for every $f \in C^\infty_c$,
\begin{equation}  
\label{e.really.holder}
C^{-1} \, \|f\|_{\C^\al} \le \|f\|_{L^\infty} + \sup_{0 < |x-y| \le 1} \frac{|f(y) - f(x)|}{|y-x|^\al} \le C \, \|f\|_{\C^\al}.
\end{equation}
The proof of this fact can be obtained  similarly to that of Proposition~\ref{p.equiv} (see also \cite[Theorem~6.4.5]{mey}). Hence, one can show that for $\al \in (0,\infty) \setminus \N$, the space $\C^\al$ is the separable version of the space $C^{\lfloor \alpha \rfloor}$ of functions whose derivative of order $\lfloor \alpha \rfloor$ is $(\al - \lfloor \alpha \rfloor)$-H\"older continuous. (By ``separable version of'', we mean that there is a natural norm associated with the space just described, and we take the completion of the space of smooth functions with respect to this norm.) For $\alpha \in \N$, the space $\C^\al$ is stricty larger than the (separable version of) the space of $C^\al$ functions. We refer to \cite{bcd} for details. 
\end{rem}

The following proposition is a weak manifestation of the multiplicative structure of Besov spaces, which is exposed in more details in the appendix. 
\begin{prop}[multiplication by a smooth function]
\label{p:mult-2}
Let $r > | \alpha | $ and $p,q \in [1,\infty]$. For every $\chi \in C^{r}_c$, the mapping $f \mapsto \chi f$ extends to a continuous functional from $\Bb$ to itself.
\end{prop}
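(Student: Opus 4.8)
The plan is to prove the estimate at the level of wavelet coefficients, using Proposition~\ref{p:charact-coefs} to translate the Besov norm of $\chi f$ into $\ell^p$-norms of the coefficients $v\n(\chi f)$ and $w\ni(\chi f)$, and then to control these coefficients directly in terms of the corresponding coefficients of $f$. Concretely, since $\psii\n$ is supported in $B(x,2^{-n}R)$, we have
\begin{equation*}
w\ni(\chi f) = (\chi f, \psii\n) = (f, \chi \psii\n),
\end{equation*}
and the idea is to expand $\chi \psii\n$ back onto the wavelet basis to compare with the coefficients of $f$ at nearby scales. The essential input is a decay estimate: for $n, n' \ge 0$ and $x \in \Ln$, $x' \in \Lambda_{n'}$, the pairing $(\psii\n, \psi^{(j)}_{n',x'})$ of a wavelet at scale $n$ against $\chi$ times a wavelet at scale $n'$ should decay both in $|n - n'|$ (with a gain of a factor $2^{-r|n-n'|/2}$ roughly, coming from the $r$ vanishing moments \eqref{e:vanish.moment} of the finer wavelet and the $C^r$-regularity of $\chi$ times the coarser wavelet) and in the spatial separation $2^{\min(n,n')}|x - x'|$ (here with arbitrarily fast polynomial decay, since the wavelets have compact support, so in fact the relevant sums are finite at each pair of scales). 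First I would record this decay lemma, treating the two cases $n' \ge n$ and $n' \le n$ separately: when $n'$ is the finer scale one uses the vanishing moments of $\psi^{(j)}_{n',x'}$ and Taylor-expands $\chi \psii\n$; when $n$ is the finer scale one uses the vanishing moments of $\psii\n$ and Taylor-expands $\chi \psi^{(j)}_{n',x'}$. Since $|\alpha| < r$, the decay exponent $r$ beats $|\alpha|$, which is exactly what makes the scale sums converge.

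Granting the decay lemma, the main estimate follows from a Schur-test type argument. Writing $a\ni := 2^{dn(1/2 - 1/p)} w\ni(f)$ (and similarly for $\chi f$), one has $a\ni(\chi f) = \sum_{j, n', x'} K^{i,j}_{n,x;n',x'} \, a^{(j)}_{n',x'}(f)$ for a kernel $K$ whose rows and columns are summable with a bound of the form $C 2^{-\alpha(n - n')_+} 2^{(\text{something})(n'-n)_+}$ — more precisely, after incorporating the weights $2^{\alpha n}$ into the definition of the Besov norm, the matrix acting on the weighted sequences has $\ell^1 \to \ell^1$ and $\ell^\infty \to \ell^\infty$ operator norms bounded by a constant depending only on $\chi$, $\alpha$, $r$, $d$. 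By interpolation (or by a direct Schur test, since $1 \le p \le \infty$), the same bound holds $\ell^p \to \ell^p$, and likewise for the $\ell^q$ sum over scales. One also needs to handle the coarsest-scale term $\V_0(\chi f)$: here one writes $v_{0,x}(\chi f) = (f, \chi \phi_{0,x})$, decomposes $\chi \phi_{0,x} = \V_0(\chi\phi_{0,x}) + \sum_{n \ge 0}\W_n(\chi \phi_{0,x})$, and uses the same kind of decay estimate (now the finer scales carry vanishing moments against the $C^r$ function $\chi\phi_{0,x}$) to bound $v_{0,x}(\chi f)$ by $\|\V_0 f\|_{L^p} + \sum_{n\ge 0} 2^{-\alpha n}\|\W_n f\|_{L^p}$-type quantities; since $\alpha < r$ this sum converges and is controlled by $\|f\|_{\Bb}$. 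Assembling the pieces gives $\|\chi f\|_{\Bb} \le C(\chi) \|f\|_{\Bb}$ for all $f \in C^\infty_c$, and by density the map extends continuously to $\Bb$.

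There is one point of bookkeeping worth flagging: the statement asserts that $f \mapsto \chi f$ extends to a functional \emph{from $\Bb$ to itself}, so one must check that $\chi f \in \Bb$ (not merely that its norm is finite), but this is immediate from the density of $C^\infty_c$ and the continuity of the multiplication operator on $C^\infty_c$ — the extension of a bounded operator defined on a dense subspace lands in the completion.

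The main obstacle I anticipate is the decay lemma for the mixed wavelet pairings, and in particular getting the exponent right in the direction where $\chi \psii\n$ is the \emph{coarse} object hit by a fine wavelet with vanishing moments: one must be careful that the Taylor remainder of $\chi \psii\n$ of order $r$ genuinely produces the factor $2^{-rn'/2}$ relative to $2^{-dn'(1/2-1/p)}$-type normalisations, keeping track of the $L^1$-mass of the $r$-th derivatives of $\chi\psii\n$, which scales like $2^{rn}$. Everything else — the Schur test, the coarse-scale term, the density argument — is routine once this lemma is in hand. In fact, since the general multiplicative theory is developed in the appendix (Proposition~\ref{a:p:equiv} and its neighbours), an alternative and shorter route is simply to cite that the paraproduct/product estimates on $\B^\alpha_{p,q}$ give $\|\chi f\|_{\Bb} \lesssim \|\chi\|_{C^r} \|f\|_{\Bb}$ and invoke density; I would likely present the self-contained wavelet argument for $p = q = \infty$ (consistent with the paper's stated aim of a self-contained treatment of the $\Cl$ case) and defer the general $p,q$ to the appendix.
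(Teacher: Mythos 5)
Your argument is correct, but it is not the route the paper takes, so a comparison is in order. The paper only gives a self-contained proof in the case $\al<0$, $p=q=\infty$: there, by Proposition~\ref{p.equiv} one tests $\chi f$ against $\eta_{\lambda,x}$, observes that $\chi(\lambda z+x)\eta(z)$ is again (up to a constant) an element of $\mathscr{B}^{r_0}$ with the same support, and concludes directly; the general $\Bb$ case is deferred to the appendix, where it is obtained by identifying the wavelet-defined spaces with Littlewood--Paley Besov spaces (Theorem~\ref{a:t:multimeyer}, Proposition~\ref{a:p:equiv}, Remark~\ref{a:r:litpaley-mult}) and citing the product estimates of \cite{bcd} (Theorem~\ref{a:t:mult}) together with $C^{r}_c\subset\B^{r}_{\infty,\infty}$. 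Your proposal instead proves the general case directly on the wavelet side: the almost-orthogonality estimate for $(\chi\psii\n,\psi^{(j)}_{n',x'})$ (vanishing moments of the finer wavelet against a Taylor expansion of $\chi$ times the coarser one, plus finite overlap from compact supports) followed by a Schur-test/Young-type bound on the weighted coefficient sequences. The exponent bookkeeping does close: in both directions the $2^{dn(1/2-1/p)}$ normalisations, the entropy factors $2^{d|n-n'|}$ counting overlapping cubes, and the $L^p$ interpolation between rows and columns cancel exactly, leaving a factor $2^{-(r-|\al|)|n-n'|}$, and the coarse-scale term $v_{0,x}(\chi f)$ requires $r+\al>0$ (not merely $\al<r$, a small imprecision in your write-up) --- both conditions being exactly $|\al|<r$. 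What each approach buys: yours is self-contained and stays entirely within the paper's wavelet framework, at the cost of carrying out the decay lemma and the Schur test in detail; the paper's is much shorter but imports the Littlewood--Paley machinery and the equivalence of the two definitions, which is precisely the non-self-contained material relegated to the appendix. Your final density remark (that the extension lands in the completion $\Bb$) is at the same level of rigour as the paper's own ``$C^\infty_c$ is dense'' step, though strictly one should note that $\chi f\in C^r_c$ can itself be approximated in $\Bb$-norm by smooth compactly supported functions.
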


\begin{proof}[Partial proof of Proposition~\ref{p:mult-2}]
We give a proof for the particular case $\al < 0$ and $p = q = \infty$. The general case is postponed to the appendix. 
Let $f \in C^\infty_c$ and consider the integral
\begin{equation*}
\lambda^{-d}  \int f (y) \chi (y)   \eta \left(\frac{y-x}{\lambda}\right) \d y .
\end{equation*}
For every $\lambda > 0$ and $x \in \R^d$, define $\tilde{\eta}$ as: $\tilde{\eta}_{\lambda,x}\left(\frac{y-x}{\lambda}\right) = \chi (y) \eta \left(\frac{y-x}{\lambda}\right)$. Then $\tilde{\eta}_{\lambda,x}(z) = \chi (z \lambda + x) \eta(z)$ for $z \in \R^d$. One can notice that $\tilde{\eta}_{\lambda ,x} \in C^{r_0}_c$ and $\supp \tilde{\eta}_{\lambda ,x} \subset \supp \eta$. Hence, by Proposition~\ref{p.equiv}, there exists $C>0$ (possibly different in every line) such that: 
\begin{align*}
\lambda^{-d}  \int f (y) \chi (y)   \eta \left(\frac{y-x}{\lambda}\right) \d y & \leqslant C \lambda^\alpha \|f\|_{\B^\al_{\infty,\infty}} \|\tilde{\eta}_{\lambda,x} \|_{C^{r_0}_c} \\
& \leqslant C \lambda^\alpha \|f\|_{\B^\al_{\infty,\infty}} \| \chi(\lambda \, \cdot\, ) \|_{C^{r_0}_c} \\
& \leqslant C \lambda^\alpha \|f\|_{\B^\al_{\infty,\infty}} \| \chi \|_{C^{r_0}_c},
\end{align*}
uniformly over $f \in C^\infty_c$, $\lambda \in (0,1]$, $\eta \in \mathscr{B}^{r_0}$ and $x \in \R^d$. The result follows by the fact that $C^\infty_c$ is dense in $\B^{\alpha}_{\infty,\infty}$.
%
\end{proof}

\begin{rem}
\label{r:topo-loc}
The notion of a complete space makes sense for arbitrary topological vector spaces, since a description of neighbourhoods of the origin is sufficient for defining what a Cauchy sequence is. Yet, in our present setting, the topology of $\Bbl(U)$ is in fact metrisable. To see this, note that there is no loss of generality in restricting the range of $\chi$ indexing the semi-norms to a countable subset of $C^\infty_c(U)$, e.g.\ $\{\chi_n, n \in \N\}$ such that for every compact $K \subset U$, there exists $n$ such that $\chi_n = 1$ on $K$. Indeed, it is then immediate from Proposition~\ref{p:mult-2} that if $\chi$ has support in $K$, then $\|\chi f\|_{\Bb} \le C \|\chi_n f\|_{\Bb}$ for some $C$ not depending on $f$. Hence, we can view $\Bbl(U)$ as a complete (Fréchet) space equipped with the metric
\begin{equation}
\label{e:def:metric}
d_{\Bbl(U)}(f,g) = \sum_{n = 0}^{+\infty} 2^{-n} \ \| \chi_n (f-g) \|_{\Bb} \wedge 1 .
\end{equation}
\end{rem}

We now give an alternative family of semi-norms, based on wavelet coefficients, that is equivalent to the family given in Definition \ref{def:Besov:wave} or Remark~\ref{r:topo-loc}.

\begin{defi}[spanning sequence]
\label{def:span}
Recall that $R$ is such that \eqref{e:def:R} holds. Let $K \subset U$ be compact and $k \in \N$. We say that the pair $(K,k)$ is \emph{adapted} if
	\begin{equation}
		\label{e:def:n0}
		2^{-k} R < \mathsf{dist}(K,U^c).
	\end{equation}
We say that the set $\msc K$ is a \emph{spanning sequence} if it can be written as 
$$
\msc K = \{(K_n,k_n),\ n \in \N\},
$$
where $(K_n)$ is an increasing sequence of compact subsets of $U$ such that $\bigcup_n K_n = U$ and for every $n$, the pair $(K_n,k_n)$ is adapted.
\end{defi}

For every adapted pair $(K,k)$, $f \in C^\infty_c(U)$ and $n \ge k$, we let
	\begin{equation}
	\label{e:def:vnk}
	v\nkold f  =  2^{dn \Ll( \frac12 - \frac1p \Rr) } \Ll\| \Ll( v\n f\Rr)_{x \in \Ln \cap K} \Rr\|_{\ell^p},
	\end{equation}
	\begin{equation}
	\label{e:def:wnk}
	w\nkold f  =  2^{dn \Ll( \frac12 - \frac1p \Rr) } \Ll\| \Ll( w\ni f\Rr)_{i < 2^d, x \in \Ln \cap K} \Rr\|_{\ell^p},
	\end{equation}
	and we define the semi-norm
	\begin{equation}
	\label{e:def:local-norm}
	\|f\|_{\B^{\al,K,k}_{p,q}} = v\nzkold f + \Ll\| \Ll( 2^{\al n} w\nkold f \Rr)_{n \ge k}  \Rr\|_{\ell^q}. 
	\end{equation}
	
\begin{prop}[Local Besov spaces via wavelet coefficients]
\label{p:loc-wave}
Let $p,q \in [1,\infty]$. 

\noindent (1) For every adapted pair $(K,k)$, the mapping $f \mapsto \|f \|_{\B^{\al,K,k}_{p,q}}$ extends to a continuous semi-norm on $\Bbl(U)$. 

\noindent (2) The topology induced by the family of semi-norms $\| \, \cdot \, \|_{\B^{\al,K,k}_{p,q}}$, indexed by adapted pairs $(K,k)$, is that of $\Bbl(U)$. 

\noindent (3) Let $\msc K$ be a spanning sequence. Part (2) above remains true when considering only the seminorms indexed by pairs in $\msc K$.
\end{prop}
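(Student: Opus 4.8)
The plan is to build everything on a single \emph{localization principle}: for an adapted pair $(K,k)$, the wavelet coefficients $v\n f$ and $w\ni f$ with $n \ge k$ and $x \in \Ln \cap K$ are left unchanged if $f$ is modified outside the $2^{-k}R$-neighbourhood $K^{+} := \{y : \mathsf{dist}(y,K) \le 2^{-k}R\}$, which by \eqref{e:def:n0} is a compact subset of $U$. For part (1), I would fix $(K,k)$ adapted and pick $\chi \in C^\infty_c(U)$ with $\chi \equiv 1$ on $K^{+}$; this is possible with $\supp\chi\subset U$ since $K^{+}$ is a compact subset of $U$. For $n \ge k$ and $x \in \Ln \cap K$ the supports of $\phi\n$ and of the $\psi\ni$ lie in $B(x,2^{-n}R) \subset K^{+}$, so $v\n(\chi f) = v\n f$ and $w\ni(\chi f) = w\ni f$ for all such $n,x$; hence $v\nzkold f = v\nzkold(\chi f)$, $w\nkold f = w\nkold(\chi f)$ and $\|f\|_{\B^{\al,K,k}_{p,q}} = \|\chi f\|_{\B^{\al,K,k}_{p,q}}$. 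Bounding $v\nzkold(\chi f) \le C\|\V_k(\chi f)\|_{L^p} \le C\|\chi f\|_{\Bb}$ via Proposition~\ref{p:charact-coefs} together with $\V_k = \V_0 + \sum_{m<k}\W_m$, and $w\nkold(\chi f) \le C\|\W_n(\chi f)\|_{L^p}$ again via Proposition~\ref{p:charact-coefs}, one obtains $\|f\|_{\B^{\al,K,k}_{p,q}} \le C\|\chi f\|_{\Bb}$ on $C^\infty_c(U)$, so the semi-norm extends continuously to $\Bbl(U)$.

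For part (2), part (1) already gives that the topology generated by the $\|\cdot\|_{\B^{\al,K,k}_{p,q}}$ is coarser than, or equal to, that of $\Bbl(U)$. For the reverse inclusion I would fix $\chi \in C^\infty_c(U)$, choose a compact $K \subset U$ whose interior contains $\supp\chi$, and $k$ so large that $(K,k)$ is adapted and $2^{-k}R < \mathsf{dist}(\supp\chi, K^c)$, and then prove $\|\chi f\|_{\Bb} \le C\|f\|_{\B^{\al,K,k}_{p,q}}$. Expressing $\|\chi f\|_{\Bb}$ through the wavelet coefficients of $\chi f$ (Proposition~\ref{p:charact-coefs}), each of them is $(f,\chi\phi\n)$ or $(f,\chi\psi\ni)$, and since $\supp(\chi\psi\ni)\subset\supp\chi$, expanding $\chi\psi\ni$ and $\chi\phi_{k,x}$ in the wavelet basis produces only coefficients centred in $K$ and at scales $\ge k$. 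The linear map sending the wavelet-coefficient array of $f$ to that of $\chi f$ is then governed by the wavelet matrix of multiplication by the smooth compactly supported function $\chi$, whose boundedness on the relevant Besov sequence space is precisely the general case of Proposition~\ref{p:mult-2} established in the appendix; combined with the support constraint (all indices stay in $K$, above $k$) this yields $\|\chi f\|_{\Bb} \le C\|f\|_{\B^{\al,K,k}_{p,q}}$. I expect this last step to be the main obstacle: one needs the almost-diagonal (Calder\'on--Zygmund type) bounds for the wavelet matrix of $g \mapsto \chi g$ --- cross-scale decay from the $C^r$ regularity and from the vanishing moments \eqref{e:vanish.moment}, plus spatial near-diagonality --- and one must check that the compact support of $\chi$ genuinely confines every participating index to $(K, \ge k)$, so that the bound can be phrased through the localized semi-norm rather than through $\|f\|_{\Bb}$. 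When $\al < 0$ and $p = q = \infty$ this can be carried out by hand exactly as in the partial proof of Proposition~\ref{p:mult-2} above (testing $f$ against the rescaled bump $\chi\psi\ni$), which keeps the $\Cl(U)$ case self-contained; the general case quotes the appendix.

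For part (3), given a spanning sequence $\msc K = \{(K_n,k_n),\ n \in \N\}$ and an arbitrary adapted pair $(K,k)$, I would use that $(K_n)$ increases to $U$ and that $K$ is compact to find $n$ with $K \subset K_n$, taking $n$ large enough (again since $\bigcup_m K_m = U$, so that $k_n$ is large when $U\neq\R^d$) that $K$ lies well inside $K_n$. A short computation then gives $\|f\|_{\B^{\al,K,k}_{p,q}} \le C\|f\|_{\B^{\al,K_n,k_n}_{p,q}}$: the tail over scales $\ge \max(k,k_n)$ is dominated term by term because $K \subset K_n$; if $k_n < k$, the finitely many intermediate scales are absorbed --- as in part (1) --- by rewriting $v\nzkold f$ restricted to $K$ through $v_{k_n,\cdot}f$ and the $w_{m,\cdot}f$ with $k_n \le m < k$ on $K_n$, using the multiresolution relations and Proposition~\ref{p:charact-coefs}; the case $k \le k_n$ is similar and easier. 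By part (2) it follows that the semi-norms indexed by $\msc K$ already generate the topology of $\Bbl(U)$.
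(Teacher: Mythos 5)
Part (1) and the forward half of part (2) in your proposal are essentially the paper's proof of \eqref{e:impl}: take $\chi\equiv 1$ on a $2^{-k}R$-neighbourhood of $K$, observe that the coefficients indexed by $n\ge k$, $x\in\Ln\cap K$ of $f$ and $\chi f$ coincide, and conclude with Proposition~\ref{p:charact-coefs}. The genuine gap is in the reverse inequality \eqref{e:revers}, which is the heart of parts (2)--(3). You reduce it to the boundedness of the wavelet matrix of $g\mapsto\chi g$ ``combined with the support constraint'', and you yourself flag that this would require almost-diagonal Calder\'on--Zygmund bounds which you do not prove. As stated this is not ``precisely'' Proposition~\ref{p:mult-2}: that proposition is an operator bound on $\Bb$, i.e.\ it controls $\|\chi f\|_{\Bb}$ by \emph{all} wavelet coefficients of $f$, and by itself it does not deliver a bound by the localized seminorm $\|f\|_{\B^{\al,K,k}_{p,q}}$; moreover your reduction needs to interchange the pairing $(f,\chi\psi_{n,x}^{(i)})$ with the wavelet expansion of $\chi\psi_{n,x}^{(i)}$, which for $f\in C^\infty(U)$ (possibly badly behaved near $\partial U$ or at infinity) is an unaddressed limiting argument. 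So the crucial estimate $\|\chi f\|_{\Bb}\le C\|f\|_{\B^{\al,K,k}_{p,q}}$ is announced rather than proved.

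The paper closes exactly this step with a device missing from your plan: the local synthesis
\begin{equation*}
f_{K} \;=\; \sum_{x \in \Lambda_{k} \cap K} v_{k,x}(f) \, \phi_{k,x} \;+\; \sum_{\substack{n \ge k,\ i < 2^d\\ x \in \Ln \cap K}} w\ni(f) \, \psi\ni ,
\end{equation*}
as in \eqref{e:def:fK}. By Proposition~\ref{p:charact-coefs}, $\|f_K\|_{\Bb}\le C\|f\|_{\B^{\al,K,k}_{p,q}}$, and $f_K$ coincides with $f$ on the set $K'$ of \eqref{e:def:K'}; hence if $\supp\chi\subset K'$ then $\chi f=\chi f_K$ and Proposition~\ref{p:mult-2} applied to the genuine Besov element $f_K$ gives \eqref{e:revers} with no matrix estimates and no interchange-of-summation issues. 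This also streamlines part (3): one only has to verify \eqref{e:existn}, namely that $\supp\chi\subset K_n'$ for some pair of the spanning sequence, which the paper does via the $\mathsf{dist}(\cdot,U^c)$ argument (treating $U$ bounded and unbounded separately). Your alternative route for (3) --- dominating an arbitrary adapted-pair seminorm by a $\msc K$-seminorm by re-expanding the scale-$k$ coefficients across intermediate scales --- could be made to work, but it needs $K+B(0,c\,2^{-k_n}R)\subset K_n$ rather than just $K\subset K_n$ (and when $U=\R^d$ the $k_n$ need not grow), and the ``short computation'' is again left unproven; routing (3) through the $f_K$ argument avoids both issues.
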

\begin{rem}
\label{r:2nd-metric}
Another metric that is compatible with the topology on $\Bbl(U)$ is thus given by
$$
d'_{\Bbl(U)}(f,g) = \sum_{n = 0}^{+\infty} 2^{-n}\ \| f-g \|_{\B^{\al,K_n,k_n}_{p,q}} \wedge 1 ,
$$
where $\msc K = \{(K_n,k_n), n \in \N\}$ is any given spanning sequence.
\end{rem}
\begin{proof}[Proof of Proposition~\ref{p:loc-wave}]
%
In order to prove parts (1-2) of the proposition, it suffices to show the following two statements.
\begin{equation}
\label{e:impl}
\begin{array}{c}
\text{For every adapted pair } (K,k), \text{ there exists } \chi \in C^\infty_c(U) \text{ and } C < \infty \text{ s.t.} \\
\forall f \in C^\infty(U), \ \|f \|_{\B^{\al,K,k}_{p,q}} \le C \|\chi f\|_{\Bb};
\end{array}
\end{equation}
\begin{equation}
\label{e:revers}
\begin{array}{c}
\text{For every } \chi \in C^\infty_c(U), \text{ there exists } (K,k) \text{ adapted pair and } C < \infty \text{ s.t.} \\
\forall f \in C^\infty(U), \ \|\chi f\|_{\Bb} \le C \|f \|_{\B^{\al,K,k}_{p,q}}.
\end{array}
\end{equation}
We begin with \eqref{e:impl}. Let $(K,k)$ be an adapted pair, and let $\chi \in C^\infty_c(U)$ be such that $\chi = 1$ on $K + \ov B(2^{-k}R)$. For every $n \ge k$ and $x \in \Ln \cap K$,
$$
v\n f  = v\n(\chi f), \qquad w\ni f = w\ni(\chi f) \quad (i < 2^d),
$$
and as a consequence,
$$
v\nkold(f) \le 2^{dn \Ll( \frac12 - \frac1p \Rr) } \Ll\| \Ll( |v\n(\chi f)|\Rr)_{x \in \Ln} \Rr\|_{\ell^p} \le C \|\V_n (\chi f)\|_{L^p}
$$
(where we used \eqref{e:comp_coeff} in the last step), and similarly with $v\nkold, v\n$ and $\V_n$ replaced by $w\nkold, w\ni$ and $\W_n$ respectively. We thus get that
\begin{align*}
\|f\|_{\B^{\al,K,k}_{p,q}} & = v\nzkold f + \Ll\| \Ll( 2^{\al n} w\nkold f \Rr)_{n \ge k}  \Rr\|_{\ell^q} \\ 
 & \le C \Ll( \|\V_{k} (\chi f)\|_{L^p} + \Ll\| \Ll( 2^{\al n} \|\W_n (\chi f)\|_{L^p}  \Rr)_{n \ge n_0}  \Rr\|_{\ell^q} \Rr) 
 \le C \|\chi f\|_\Bb.
\end{align*}
We now turn to \eqref{e:revers}. In order to also justify part (3), we will show that we can in fact pick the adapted pair in $\msc K = \{(K_n,k_n), n \in \N\}$.

Let $(K,k)$ be an adapted pair. For every $f \in C^\infty(U)$, we define
\begin{equation}
\label{e:def:fK}
f_{K} = \sum_{x \in \Lambda_{k} \cap K} v_{k,x}(f) \, \phi_{k,x} + \sum_{\substack{n \ge k, i < 2^d\\ x \in \Ln \cap K}} w\ni(f) \, \psi\ni.
\end{equation}
The functions $f$ and $f_K$ coincide on
\begin{equation}
\label{e:def:K'}
K':= \Ll\{x \in \R^d : \msf{dist}(x,K^c) \ge  2^{-k} R \Rr\}.
\end{equation}
(Although the notation is not explicit in this respect, we warn the reader that $f_K$ and $K'$ are defined in terms of the pair $(K,k)$ rather than in terms of $K$ only.)
Let $\chi \in C^\infty_c(U)$ with compact support $L \subset U$. Assuming that
\begin{equation}
\label{e:existn}
\text{there exists } n \in \N \text{ s.t. } L \subset K_n',
\end{equation}
we see that for such an $n$,
$$
\|\chi f \|_\Bb = \|\chi f_{K_n} \|_\Bb \le C \|f_{K_n}\|_\Bb \le C \|f\|_{\B^{\al,{K_n},k_n}_{p,q}}
$$
by Proposition~\ref{p:mult-2} and \eqref{e:def:local-norm}. Hence, it suffices to justify \eqref{e:existn}.
Let $\msf{d} = \msf{dist}(L,U^c)$. Since $x \mapsto \msf{dist}(x,U^c)$ is positive and continuous on $L$, we obtain $\msf{d}>0$. If $U$ is bounded, then there exists $n \in \N$ such that $K_n$ contains the compact set $\{ x : \msf{dist}(x,U^c) \ge \msf{d}/2\}$. We must then have $2^{-k_n} R < \msf{d}/2$, so that
\begin{align*}
	x \in L & \Rightarrow \msf{dist}(x,K_n^c) \ge \msf{dist}(x,U^c) - \frac{\msf{d}}{2} \ge \frac{\msf{d}}{2} > 2^{-k_n} R \\
	& \Rightarrow x \in K_n'.
\end{align*}
If $U$ is unbounded, we can do the same reasoning with $U$ replaced by 
$$
U \cap \Ll( L + B(0,R) \Rr),
$$
so the proof is complete.
\end{proof}

\begin{rem}
For any adapted pair $(K,k)$, the quantity $\|f\|_{\B^{\al,K,k}_{p,q}}$ is well defined as an element of $[0,+\infty]$ as soon as $f$ is a linear form on $C^r_c(U)$, through the interpretation of $v_{k,x} f$ and $w\ni f$ in \eqref{e:def:vw} as a duality pairing. 
\end{rem}

The characterization of Proposition~\ref{p:loc-wave} yields a straightforward proof of embedding properties between Besov spaces (see for example \cite[Proposition 2.71]{bcd}).
\begin{prop}[Local Besov embedding]
	\label{p:loc-emb}
Let $1 \leqslant p_2 \leqslant p_1 \leqslant +\infty$, $1 \leqslant q_2 \leqslant q_1 \leqslant +\infty$, $\alpha \in \R$ and
$$
\be = \alpha + d\Ll(\frac{1}{p_2} - \frac{1}{p_1}\Rr).
$$
If $|\al|, |\be| < r$ and $(K,k)$ is an adapted pair, then there exists $C < \infty$ such that for every linear form $f$ on $C^r_c(U)$,
$$
\|f\|_{\B^{\al,K,k}_{p_1,q_1}} \le C \|f\|_{\B^{\be,K,k}_{p_2,q_2}} .
$$
In particular, we have $\Bl{\be}_{p_2,q_2}(U) \subset \Bl{\al}_{p_1,q_1}(U)$.
\end{prop}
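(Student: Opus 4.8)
The plan is to prove the inequality $\|f\|_{\B^{\al,K,k}_{p_1,q_1}} \le C \|f\|_{\B^{\be,K,k}_{p_2,q_2}}$ by working at the level of the wavelet-coefficient seminorms \eqref{e:def:vnk}--\eqref{e:def:local-norm}, reducing it to two elementary facts: monotonicity of $\ell^p$ norms in the exponent for finitely-supported (or summable) families, and a dimension-counting argument that converts the loss in $\ell^p$-integrability into the gain $d(\frac1{p_2}-\frac1{p_1})$ of regularity. First I would dispose of the $\ell^q$ part: since $q_2 \le q_1$, we have $\|u\|_{\ell^{q_1}} \le \|u\|_{\ell^{q_2}}$ for any sequence $u$, so it suffices to prove the estimate with $q_1 = q_2 =: q$, and then it reduces to a term-by-term comparison (with the right power of $2^n$) of $w^{(i)}_{n,K,p_1}f$ against $w^{(i)}_{n,K,p_2}f$, plus the analogous comparison of the $v_{k,K,p_i}f$ terms.

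The core step is the following. Fix $n \ge k$. The family $(w\ni f)_{i<2^d,\ x \in \Ln \cap K}$ is indexed by a set of cardinality at most $C\, 2^{dn}$ (there are $2^d - 1$ values of $i$, and $\Ln \cap K$ has at most $C\,2^{dn}$ points, $C$ depending only on $K$ and $d$). For a family $u$ indexed by a finite set of cardinality $N$ and $p_2 \le p_1$, the standard inequality $\|u\|_{\ell^{p_1}} \le \|u\|_{\ell^{p_2}} \le N^{\frac1{p_2}-\frac1{p_1}} \|u\|_{\ell^{p_1}}$ gives in particular $\|u\|_{\ell^{p_1}} \le \|u\|_{\ell^{p_2}}$. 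Applying this with $N = C\,2^{dn}$ to the $w$-coefficients, and recalling the prefactors $2^{dn(\frac12-\frac1{p_i})}$ in \eqref{e:def:wnk}, we get
\begin{equation*}
w\nki[,\ ] \ \text{--- more precisely ---} \quad 2^{dn(\frac12-\frac1{p_1})}\Ll\|(w\ni f)\Rr\|_{\ell^{p_1}} \le 2^{dn(\frac12-\frac1{p_1})}\Ll\|(w\ni f)\Rr\|_{\ell^{p_2}} = 2^{dn(\frac1{p_2}-\frac1{p_1})}\, w_{n,K,p_2} f .
\end{equation*}
Hence $w_{n,K,p_1}f \le 2^{dn(\frac1{p_2}-\frac1{p_1})}\, w_{n,K,p_2}f$, and therefore
$$
2^{\al n}\, w_{n,K,p_1}f \le 2^{\al n + dn(\frac1{p_2}-\frac1{p_1})}\, w_{n,K,p_2}f = 2^{\be n}\, w_{n,K,p_2}f,
$$
which is exactly the term-by-term bound needed. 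The $v_{k,K,p_i}f$ terms are handled identically, using that $\Ln[_{k}] \cap K$ is finite with at most $C\,2^{dk}$ elements — here the constant depends on the fixed level $k$ but that is harmless. Summing in $\ell^q$ over $n \ge k$ then yields $\|f\|_{\B^{\al,K,k}_{p_1,q}} \le C\|f\|_{\B^{\be,K,k}_{p_2,q}}$, and combining with the $\ell^{q_1} \le \ell^{q_2}$ step finishes the coefficient-level estimate for all $f \in C^\infty(U)$; since these seminorms extend continuously to $\Bbl(U)$ by Proposition~\ref{p:loc-wave}(1) (and are well-defined in $[0,\infty]$ for any linear form on $C^r_c(U)$ by the remark preceding this proposition), the inequality passes to all such $f$. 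The inclusion $\Bl{\be}_{p_2,q_2}(U) \subset \Bl{\al}_{p_1,q_1}(U)$ is then immediate from Proposition~\ref{p:loc-wave}(2): the topology of each local Besov space is generated by the respective family of seminorms over adapted pairs, and we have just shown each $\B^{\al,K,k}_{p_1,q_1}$-seminorm is dominated by the corresponding $\B^{\be,K,k}_{p_2,q_2}$-seminorm.

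I do not expect a serious obstacle here; the only points requiring a little care are (i) making sure the cardinality bound $\#(\Ln \cap K) \le C\,2^{dn}$ is stated with the $n$-dependence explicit and the constant depending only on $K$ and $d$ (this is where the exponent shift $d(\frac1{p_2}-\frac1{p_1})$ is produced, so the bookkeeping of powers of $2^n$ must be exact), and (ii) checking the endpoint cases $p_1 = \infty$ or $p_2 = \infty$ and $q_i = \infty$, where the $\ell^p$-norms are suprema — the monotonicity $\|u\|_{\ell^{p_1}} \le \|u\|_{\ell^{p_2}}$ still holds, and the finite-cardinality bound $\|u\|_{\ell^{p_2}} \le N^{1/p_2}\|u\|_{\ell^\infty}$ degenerates gracefully, so nothing breaks. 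The hypothesis $|\al|,|\be| < r$ is used only to ensure both seminorms are among those covered by Proposition~\ref{p:loc-wave}; it plays no role in the inequality itself.
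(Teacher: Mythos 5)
Your proof is correct and takes essentially the same route as the paper's one-line argument: the monotonicity $\|u\|_{\ell^{p_1}} \le \|u\|_{\ell^{p_2}}$ applied to the wavelet coefficients, combined with the prefactors $2^{dn(\frac12-\frac1p)}$ built into \eqref{e:def:vnk}--\eqref{e:def:wnk}, yields the shift $2^{dn(\frac1{p_2}-\frac1{p_1})}$ term by term, and $\|\cdot\|_{\ell^{q_1}} \le \|\cdot\|_{\ell^{q_2}}$ handles the outer norm. One small remark: the cardinality bound $\#(\Lambda_n \cap K) \lesssim 2^{dn}$ is superfluous and is not what produces the exponent shift (your own displayed computation never uses it) --- the shift comes entirely from the normalization in the seminorms, so the argument needs only $\ell^p$-monotonicity, valid for arbitrary families.
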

\begin{proof}
We write the norm \eqref{e:def:local-norm}, recall \eqref{e:def:vnk}-\eqref{e:def:wnk}, and use the fact that $\| \cdot \|_{\ell^{p_1}} \leqslant \| \cdot \|_{\ell^{p_2}}$ if $p_1 \geqslant p_2$.
\end{proof}
Due to our definition of the space $\Bbl(U)$ as a completion of $C^\infty(U)$, the fact that $\|f\|_{\B^{\al,K,k}_{p,q}}$ is finite for every adapted pair $(K,k)$ does not necessarily imply that $f \in \Bbl(U)$. We have nonetheless the following result.

\begin{prop}[A criterion for belonging to $\Bl{\al}_{p,q}(U)$]
\label{p:crit}
Let $|\al'| < r$ and let $p,q \in [1,\infty]$. Let $f$ be a linear form on $C^r_c(U)$, and let $\msc K$ be a spanning sequence. If for every $(K,k) \in \msc K$, 
$$
\|f\|_{\B^{\al',K,k}_{p,q}} < \infty,
$$
then for every $\al < \al'$, the form $f$ belongs to $\Bl{\al}_{p,1}(U)$.
\end{prop}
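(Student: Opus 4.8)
The plan is to exhibit $f$ as a limit, in the Fréchet space $\Bl{\al}_{p,1}(U)$, of the localized wavelet expansions attached to the given spanning sequence $\msc K=\{(K_m,k_m),\ m\in\N\}$. There is no loss in fixing $\al\in(-r,\al')$, since $\Bl{\al}_{p,1}(U)$ is only defined for $|\al|<r$ and, for $-r<\al\le\al''$, \eqref{e:def:local-norm} gives $\|\cdot\|_{\B^{\al,K,k}_{p,1}}\le\|\cdot\|_{\B^{\al'',K,k}_{p,1}}$; I would also assume, harmlessly, that $(k_m)$ is non-decreasing. For each $m$ I take $f^{(m)}:=f_{K_m}$ to be the object defined by \eqref{e:def:fK}, which makes sense because, $(K_m,k_m)$ being adapted, every $\phi_{k_m,x}$ and every $\psi\ni$ appearing there has support inside $U$ and so can be paired with the linear form $f$.

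The first thing to check is that $f^{(m)}\in\Bl{\al}_{p,1}(U)$. The leading term of \eqref{e:def:fK} is a finite combination of functions of $C^r_c(U)$, and $C^r_c(U)\subset\Bl{\al}_{p,q}(U)$: since $|\al|<r$, the vanishing-moment relation \eqref{e:vanish.moment} together with a Taylor expansion yields $\|h\|_{\B^\al_{p,q}}\le C\|h\|_{C^r}$ for $h$ supported in a fixed compact set, so that mollification supplies $C^\infty_c(U)$ approximants (the same kind of estimate as in the proof of Proposition~\ref{p.equiv}). For the remaining terms, each single-scale piece $\sum_{i<2^d,\,x\in\Lambda_n\cap K_m}w\ni(f)\,\psi\ni$ lies in $W_n\cap C^r_c(U)$, so its $\B^\al_{p,1}$-norm is $2^{\al n}$ times its $L^p$-norm, which by \eqref{e:comp_coeff-2} and \eqref{e:def:wnk} is $\le C\,2^{\al n}\,w_{n,K_m,p}(f)$. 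Since $\|f\|_{\B^{\al',K_m,k_m}_{p,q}}<\infty$ forces $w_{n,K_m,p}(f)\le C\,2^{-\al' n}$ and $\al<\al'$, the series $\sum_{n\ge k_m}2^{\al n}w_{n,K_m,p}(f)$ converges geometrically, and Proposition~\ref{p:mult-2} upgrades this to convergence of \eqref{e:def:fK} in every seminorm $\|\chi\,\cdot\,\|_{\B^\al_{p,1}}$. Hence $f^{(m)}\in\Bl{\al}_{p,1}(U)$.

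Next I would show $f^{(m)}\to f$. By Proposition~\ref{p:loc-wave}(3) it is enough to fix a pair $(K_n,k_n)\in\msc K$ and prove $\|f-f^{(m)}\|_{\B^{\al,K_n,k_n}_{p,1}}\to0$, and I expect this quantity to vanish identically once $m$ is large. The assertion is that for $m$ large the coefficients $v_{k_n,x}(f^{(m)})$ and $w\ni(f^{(m)})$ with $x\in K_n$ coincide with those of $f$: when $i\ge k_m$ (hence $x\in K_n\subset K_m$) this is immediate from orthonormality of the wavelet basis, and when $k_n\le i<k_m$ only the leading term of \eqref{e:def:fK} contributes to $w\ni(f^{(m)})$, so one uses that $\psi\ni\in V_{i+1}\subset V_{k_m}$, i.e.\ $\V_{k_m}\psi\ni=\psi\ni$, together with the geometric fact that once $2^{-k_n}R+2^{-k_m}R<\mathsf{dist}(K_n,K_m^c)$ (possible because $\mathsf{dist}(K_n,K_m^c)\uparrow\mathsf{dist}(K_n,U^c)>2^{-k_n}R$) every $x'\in\Lambda_{k_m}$ with $\supp\phi_{k_m,x'}$ meeting $\supp\psi\ni$ already lies in $K_m$; hence the finite sum $\sum_{x'\in\Lambda_{k_m}\cap K_m}v_{k_m,x'}(f)(\phi_{k_m,x'},\psi\ni)$ equals $\langle f,\V_{k_m}\psi\ni\rangle=w\ni(f)$. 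The same reasoning handles $v_{k_n,x}$. This coefficient-matching is the delicate point of the argument: it is where the adapted-pair condition is genuinely used, and where one must be careful to commute the (a priori only linear) form $f$ solely with finite sums.

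Finally, I would combine the two steps: $(f^{(m)})$ is a sequence in the complete metric space $\Bl{\al}_{p,1}(U)$ that converges, in the topology given by Proposition~\ref{p:loc-wave}(3), to $f$; being thus Cauchy, it has a limit in $\Bl{\al}_{p,1}(U)$, which is $f$ (the limit and $f$ have identical wavelet coefficients, and each coefficient map is continuous on $\Bl{\al}_{p,1}(U)$). This proves $f\in\Bl{\al}_{p,1}(U)$.
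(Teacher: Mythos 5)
Your proof is correct, and structurally it is close to the paper's: both arguments approximate $f$ by its wavelet expansion localized to the compacts of the spanning sequence, use the hypothesis together with $\al<\al'$ to make the scales summable in $\ell^1$, and conclude by completeness plus Proposition~\ref{p:loc-wave}. The execution of the convergence step differs, though. The paper works with the finite, scale-truncated sums $f_{N,k}$, for which $\|f-f_{N,k}\|_{\B^{\al,K,k}_{p,1}}$ is exactly the tail $\sum_{n>N}2^{\al n}w_{n,K,p}(f)$ of a geometrically convergent series, then diagonalizes ($f_N=f_{N,N}$) and transfers to the seminorms $\|\chi\,\cdot\,\|_{\B^{\al}_{p,1}}$ via \eqref{e:revers}; you instead take the full localized expansions $f_{K_m}$ of \eqref{e:def:fK}, spend the hypothesis on showing $f_{K_m}\in\Bl{\al}_{p,1}(U)$, and then make the error \emph{identically zero} in each fixed seminorm $\|\cdot\|_{\B^{\al,K_n,k_n}_{p,1}}$ for large $m$ by matching coefficients, the low scales being handled by writing $\psi^{(i)}_{j,x}\in V_{k_m}$ and expanding it against the $\phi_{k_m,x'}$ with all relevant $x'$ in $K_m$. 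What your route buys is that \eqref{e:revers} and Proposition~\ref{p:mult-2} are never applied to the non-smooth difference $f_{N,k}-f$ (the paper does this somewhat informally); everything reduces to finite linear identities for the form $f$, as you emphasize. The price is the geometric claim $\mathsf{dist}(K_n,K_m^c)\uparrow\mathsf{dist}(K_n,U^c)$: under the literal Definition~\ref{def:span} (increasing compacts with union $U$) this can fail, but it does hold under the stronger reading that every compact subset of $U$ is eventually contained in some $K_m$, which the paper's own proof of Proposition~\ref{p:loc-wave} (the step \eqref{e:existn}) already uses implicitly, so this is not a gap relative to the paper's conventions. Finally, both your argument and the paper's gloss over the fact that the $C^r_c(U)$ approximants (rather than $C^\infty$ ones) lie in the completion defining $\Bl{\al}_{p,1}(U)$; your mollification remark, with a H\"older-type norm between $\al$ and $r$, is the right way to fill this in.
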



\begin{proof}[Proof of Proposition~\ref{p:crit}]
We first check that for every $(K,k) \in \msc K$, there exists a sequence $(f_{N,k})_{N \in \N}$ in $C^r_c(U)$ such that $\|f - f_{N,k}\|_{\B^{\al,K,k}_{p,1}}$ tends to $0$ as $N$ tends to infinity. The functions
$$
f_{N,k} := \sum_{x \in \Lambda_{k} \cap K} v_{k,x} (f) \, \phi_{k,x} + \sum_{\substack{k \le n \le N, i < 2^d\\ x \in \Ln \cap K}}  w\ni (f) \ \psi\ni 
$$
satisfy this property. Now notice that for $(\tilde{k},\tilde{K}) \in \msc K$ such that $\tilde{K} \supset K$, the function $f_{N,\tilde{k}}$ coincides with $f_{N,k}$ on the set $K'$ of \eqref{e:def:K'}. Then defining $f_N= f_{N,N}$,  we obtain that for every $\chi \in C^\infty_c (U)$, there exists $n_0,N_0(n_0)$ such that for every $n \ge n_0$ and $N \ge N_0$,
\begin{equation*}
\|(f_N - f ) \chi \|_{\B^{\al}_{p,1}} = \|(f_{N,k_n} - f ) \chi \|_{\B^{\al}_{p,1}},
\end{equation*}
where we have indexed the spanning sequence as $\msc K = (k_n,K_n)_{n \in \N}$.
By \eqref{e:revers}, there exist $(k_m,K_m)\in \msc K$, $C>0$ with $m$ large enough, such that:
\begin{equation*}
\|(f_{N,k_n} - f ) \chi \|_{\B^{\al}_{p,1}} \leqslant C \| f_{N,k_n} - f  \|_{\B^{\al,K_m,k_m}_{p,1}}
\end{equation*}
We can eventually choose $m=n$ to obtain $\|(f_N - f ) \chi \|_{\B^{\al}_{p,1}} \rightarrow 0$ for every$\chi \in C^\infty_c(U)$, which by Proposition \ref{p:loc-wave} is the needed result.
\end{proof}

Naturally, tightness criteria rely on the identification of compact subsets of the space of interest.

\begin{prop}[Compact embedding]
	\label{p:compact}
	Let $U$ be an open subset of $\R^d$. For every $\al < \al'$ and $p,q, s \in [1,+\infty]$, the embedding $\Bl{\al'}_{p,q}(U) \subset \Bl{\al}_{p,s}(U)$ is compact.

\end{prop}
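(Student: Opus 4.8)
The plan is to use the metrisability and completeness of the local Besov spaces to turn the statement into a sequential one, and then to produce a convergent subsequence by a diagonal extraction on wavelet coefficients; the strict gain of regularity $\al<\al'$ will be used only to make the high-frequency part of the norms uniformly small. Fix once and for all a spanning sequence $\msc K=\{(K_n,k_n),\ n\in\N\}$, so that by Proposition~\ref{p:loc-wave}(2)--(3) the seminorms $\|\cdot\|_{\B^{\al,K_n,k_n}_{p,s}}$ generate the topology of $\Bl{\al}_{p,s}(U)$, which by Remark~\ref{r:2nd-metric} is a complete metric space. Here ``the embedding is compact'' is understood in the form relevant to the tightness application: every bounded subset $B$ of $\Bl{\al'}_{p,q}(U)$ is relatively compact in $\Bl{\al}_{p,s}(U)$. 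Since the target is metrisable and complete, it suffices to show that an arbitrary sequence $(f_j)$ in $B$ has a subsequence that is Cauchy for each seminorm $\|\cdot\|_{\B^{\al,K_n,k_n}_{p,s}}$, hence (splitting the defining series of $d'_{\Bl{\al}_{p,s}(U)}$ at a large index) Cauchy for the metric, hence convergent in $\Bl{\al}_{p,s}(U)$.

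First I would record the coefficient bounds. Boundedness of $B$ means $M_n:=\sup_j\|f_j\|_{\B^{\al',K_n,k_n}_{p,q}}<\infty$ for every $n$. Since the $\ell^q$-norm dominates the $\ell^\infty$-norm, the definition \eqref{e:def:local-norm} together with \eqref{e:def:vnk}--\eqref{e:def:wnk} gives, for every $j$, every $n$, every $m\ge k_n$ and every admissible index,
\begin{equation*}
|v_{k_n,x}(f_j)|\le 2^{-dk_n(\frac12-\frac1p)}M_n,\qquad |w^{(i)}_{m,x}(f_j)|\le 2^{-dm(\frac12-\frac1p)}\,2^{-\al' m}M_n .
\end{equation*}
Thus each of the countably many coefficients $v_{k_n,x}(f_j)$ (with $x\in\Lambda_{k_n}\cap K_n$) and $w^{(i)}_{m,x}(f_j)$ (with $m\ge k_n$, $i<2^d$, $x\in\Lambda_m\cap K_n$) stays in a bounded subset of $\R$ as $j$ varies, and since $K_n$ is compact the index sets $\Lambda_m\cap K_n$ are finite. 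A diagonal extraction then yields a subsequence $(f_{j_l})_l$ along which every such coefficient converges, hence is Cauchy in $\R$.

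It remains to estimate $\|f_{j_l}-f_{j_{l'}}\|_{\B^{\al,K_n,k_n}_{p,s}}$. Fix $n$, write $g:=f_{j_l}-f_{j_{l'}}$, and use the triangle inequality for the seminorm $w_{m,K_n,p}$ together with the bound above to get $w_{m,K_n,p}(g)\le 2M_n2^{-\al' m}$ for all $m\ge k_n$, whence
\begin{equation*}
\Ll\|\Ll(2^{\al m}\,w_{m,K_n,p}(g)\Rr)_{m\ge M}\Rr\|_{\ell^s}\le 2M_n\Ll\|\Ll(2^{(\al-\al')m}\Rr)_{m\ge M}\Rr\|_{\ell^s},
\end{equation*}
and the right-hand side tends to $0$ as $M\to\infty$ because $\al-\al'<0$, uniformly in $l,l'$. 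Given $\eps>0$, choose $M$ making it $<\eps/2$; the remaining contributions to $\|g\|_{\B^{\al,K_n,k_n}_{p,s}}$, namely $v_{k_n,K_n,p}(g)$ and the finitely many terms $w_{m,K_n,p}(g)$ with $k_n\le m<M$, are $\ell^p$-norms of finitely many coefficient differences and so tend to $0$ as $l,l'\to\infty$. Hence $(f_{j_l})_l$ is Cauchy for every seminorm, and therefore converges in $\Bl{\al}_{p,s}(U)$, which proves the claim. (Alternatively, one may identify the limit explicitly: define $f$ by the coefficient-wise limits; Fatou's inequality gives $\|f\|_{\B^{\al',K_n,k_n}_{p,q}}\le M_n<\infty$ for all $n$, so Proposition~\ref{p:crit} places $f$ in $\Bl{\al}_{p,1}(U)\subset\Bl{\al}_{p,s}(U)$, the last inclusion by Proposition~\ref{p:loc-emb}.) I expect no real obstacle here — the argument is a soft compactness argument — the only delicate points being the bookkeeping of the double limit over the scale $m$ and the subsequence index $l$, and the observation that it is the strict inequality $\al<\al'$, rather than any finer relation between $q$ and $s$, that forces the tails to be uniformly small.
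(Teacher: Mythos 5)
Your proposal is correct and follows essentially the same route as the paper: boundedness of the local seminorms gives uniform bounds on the wavelet coefficients, a diagonal extraction makes them converge, the strict gap $\al<\al'$ controls the high-frequency tails uniformly, and one concludes via Cauchyness in each seminorm and completeness of the Fréchet space (the paper just phrases the last step by constructing the limit $f^{(K)}$ coefficient-wise per adapted pair, which is your parenthetical alternative). No gaps worth noting.
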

\begin{proof} 

By Proposition~\ref{p:loc-wave} and the definition of boundedness in Fréchet spaces, a sequence $(f_m)_{m \in \N}$ of elements of $\Bl{\al'}_{p,q}(U)$ is bounded in $\Bl{\al'}_{p,q}(U)$ if and only if for every adapted pair $(K,k)$, we have
$$
\sup_{m \in \N} \|f_m\|_{\B^{\al',K,k}_{p,q}} < \infty.
$$
We show that for every adapted pair $(K,k)$, there exists a subsequence $(m_{n_k})_{n_k \in \N}$ and $f^{(K)}$ in $\Bl{\al}_{p,s}(U)$ such that $\|f_{m_{n_k}} - f^{(K)} \|_{\B^{\al',K,k}_{p,s}}$ tends to $0$ as $n$ tends to infinity. 
The assumption that $\sup_m  \|f_{m}\|_{\B^{\al',K,k}_{p,q}} < \infty$ can be rewritten as
$$
\Ll\| \Ll( v_{k,x} f_m\Rr)_{x \in \Lambda_k \cap K} \Rr\|_{\ell^p} + \\
\Ll\| \Ll( 2^{n\Ll[\al' + d \Ll( \frac12 - \frac1p \Rr)\Rr] } \Ll\| \Ll( w\ni f_m\Rr)_{i < 2^d, x \in \Ln \cap K} \Rr\|_{\ell^p} \Rr)_{n \ge k}  \Rr\|_{\ell^q} \le C,
$$
uniformly over $m \in \N$. By a diagonal extraction argument, there exist a subsequence, which we still denote $(f_m)$ for convenience, and numbers $\td v_{k,x}$, $\td w\ni$ such that 
\begin{multline*}
\Ll\| \Ll( v_{k,x} f_m - \td v_{k,x}\Rr)_{x \in \Lambda_k \cap K} \Rr\|_{\ell^p} + \\
\Ll\| \Ll( 2^{n\Ll[\al + d \Ll( \frac12 - \frac1p \Rr)\Rr] } \Ll\| \Ll( w\ni f_m - \td w\ni\Rr)_{i < 2^d, x \in \Ln \cap K} \Rr\|_{\ell^p} \Rr)_{n \ge k}  \Rr\|_{\ell^{s}} \xrightarrow[m \to \infty]{} 0.
\end{multline*}
Defining
$$
f^{(K)} = \sum_{x \in \Lambda_{k} \cap K} \td v_{k,x} \ \phi_{k,x} + \sum_{\substack{n \ge k, i < 2^d\\ x \in \Ln \cap K}} \td w\ni\ \psi\ni ,
$$
we have $f^{(K)} \in \Bl{\al}_{p,s}(U)$ and $\|f_{m} - f^{(K)}\|_{\B^{\al,K,k}_{p,s}} \to 0$ as $m$ tends to infinity. The subsequence $(f_m)$ is Cauchy in $\Bl{\al}_{p,s}(U)$. Indeed, for every $(K,k) \in \msc K$, there exists $n_0(K)$ such that for every $n,m \ge n_0$,
\begin{equation*}
\|f_n - f_m \|_{\B^{\al,K,k}_{p,s}} \leqslant \|f_n - f^{(K)} \|_{\B^{\al,K,k}_{p,s}} + \|f^{(K)} - f_m \|_{\B^{\al,K,k}_{p,s}} < \varepsilon.
\end{equation*} 
This completes the proof.
\end{proof}

\begin{rem}
	\label{r:loc-not-glob} 
	Proposition~\ref{p:compact} would not be true if $\Bl{\al'}_{p,q}(U)$ and $\Bl{\al}_{p,s}(U)$ were replaced by their global counterparts, respectively $\B^{\al'}_{p,q}$ and $\B^{\al}_{p,s}$. Indeed, one can take for example a non-zero function $f \in C^\infty_c$ and consider the sequence $\left( f(\cdot - np)\right)_{n \geqslant 1}$, with $p \in \R^d \setminus \{0\}$. This sequence is bounded in every global Besov space $\B^\al_{p,q}$, but has no convergent subsequence in any of these spaces.
\end{rem}
An immediate consequence of Propositions~\ref{p:crit} and \ref{p:compact} is:
\begin{cor}
\label{c:precomp}
Let $|\al'| < r$, $p,q \in [1,\infty]$, let $\msc K$ be a spanning sequence, and for every $(K,k) \in \msc K$, let $M_K \in [0,\infty)$. For every $\al < \al'$, $s \in [1,\infty]$, the set 
\begin{equation}
\label{e:precomp}
\Ll\{f \text{ linear form on } C^r_c(U) \text{ such that } \forall (K,k) \in \msc K, \ \|f\|_{\B^{\al',K,k}_{p,q}} \le M_K \Rr\}
\end{equation}
is compact in $\Bl{\al}_{p,s}(U)$.
\end{cor}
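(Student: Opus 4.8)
The plan is to read the corollary off Propositions~\ref{p:crit} and~\ref{p:compact}. Write $S$ for the set in~\eqref{e:precomp}, and fix an auxiliary exponent $\al''$ with $\al \vee (-r) < \al'' < \al'$: such a choice exists because $\al < \al'$ and $-r < \al'$, and it ensures $|\al''| < r$, so that $\Bl{\al''}_{p,1}(U)$ is a well-defined space. I will establish three facts: (i) $S \subset \Bl{\al''}_{p,1}(U)$; (ii) $S$ is bounded in $\Bl{\al''}_{p,1}(U)$; (iii) $S$ is closed in $\Bl{\al}_{p,s}(U)$. Granting these, Proposition~\ref{p:compact}, applied with $\al''$ and $1$ in the roles of $\al'$ and $q$, shows that the bounded set $S$ is relatively compact in $\Bl{\al}_{p,s}(U)$; combined with (iii) this gives that $S$ is compact. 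Since $\Bl{\al}_{p,s}(U)$ is metrisable, we are free to reason sequentially throughout.

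Fact (i) is immediate from Proposition~\ref{p:crit}, since by the definition of $S$ every $f \in S$ satisfies $\|f\|_{\B^{\al',K,k}_{p,q}} \le M_K < \infty$ for all $(K,k) \in \msc K$. For (ii), Proposition~\ref{p:loc-wave}(3) reduces boundedness in $\Bl{\al''}_{p,1}(U)$ to controlling, over $f \in S$, each of the seminorms $\|\cdot\|_{\B^{\al'',K_n,k_n}_{p,1}}$ associated with $(K_n,k_n) \in \msc K$. Here I would use the elementary estimate, valid for any adapted pair $(K,k)$,
\[
\|f\|_{\B^{\al'',K,k}_{p,1}} \le C_{\al',\al''} \, \|f\|_{\B^{\al',K,k}_{p,q}},
\]
obtained (exactly as in Remark~\ref{r:easy.embed}) by writing $2^{\al'' n}\, w\nkold f = 2^{(\al''-\al')n}\,\Ll( 2^{\al' n}\, w\nkold f \Rr)$, dominating the $\ell^\infty$-norm of the bracketed sequence by its $\ell^q$-norm, and summing the geometric series $\sum_{n \ge k} 2^{(\al''-\al')n}$, which is bounded uniformly in $k \ge 0$ because $\al'' < \al'$. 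This gives $\sup_{f \in S} \|f\|_{\B^{\al'',K_n,k_n}_{p,1}} \le C\, M_{K_n} < \infty$ for each $n$, which is precisely boundedness in the Fréchet space $\Bl{\al''}_{p,1}(U)$.

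For (iii) I would argue sequentially: suppose $f_m \to f$ in $\Bl{\al}_{p,s}(U)$ with $f_m \in S$, and show $f \in S$. The key point is that convergence in $\Bl{\al}_{p,s}(U)$ forces convergence of every individual wavelet coefficient: given $n \in \N$, $i < 2^d$ and $x \in \Lambda_n$, one picks an adapted pair $(K,k)$ with $k \le n$ and $x \in \Lambda_n \cap K$ (possible because $U$ is open), and then, by inspection of \eqref{e:def:vnk}--\eqref{e:def:local-norm}, $|w\ni(f_m - f)| \le C\, \|f_m - f\|_{\B^{\al,K,k}_{p,s}}$ for a constant $C$ depending only on $n,i,x$ and the pair $(K,k)$; the right-hand side tends to $0$ by Proposition~\ref{p:loc-wave}(2), and the $v$-coefficients are handled likewise. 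Consequently, for each $(K,k) \in \msc K$, the quantities $v\nzkold f$ and $w\nkold f$ (for $n \ge k$) --- each a continuous function of only finitely many coefficients, $K$ being compact --- satisfy $v\nzkold f = \lim_m v\nzkold f_m$ and $w\nkold f = \lim_m w\nkold f_m$, so Fatou's lemma for the $\ell^q$-norm yields
\[
\|f\|_{\B^{\al',K,k}_{p,q}} \le \liminf_{m \to \infty} \|f_m\|_{\B^{\al',K,k}_{p,q}} \le M_K .
\]
As $f$, being an element of $\Bl{\al}_{p,s}(U)$, is in particular a linear form on $C^r_c(U)$, this shows $f \in S$, so $S$ is closed, hence compact. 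I expect the only step requiring real care to be this last one --- namely the lower semicontinuity of the seminorms $\|\cdot\|_{\B^{\al',K,k}_{p,q}}$ on $\Bl{\al}_{p,s}(U)$, i.e.\ the passage from convergence in the locally defined topology of $\Bl{\al}_{p,s}(U)$ to convergence of each fixed wavelet coefficient; facts (i) and (ii) amount to a direct combination of Propositions~\ref{p:crit} and~\ref{p:compact} with a geometric-series estimate.
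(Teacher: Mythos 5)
Your proposal is correct and follows exactly the route the paper indicates: the paper offers no written proof, declaring the corollary an immediate consequence of Propositions~\ref{p:crit} and~\ref{p:compact}, and your argument is the natural fleshing-out of that — an intermediate exponent $\al''\in(\al\vee(-r),\al')$ to get membership and boundedness in $\Bl{\al''}_{p,1}(U)$ (via Proposition~\ref{p:crit} and the geometric-series comparison of seminorms), then the compact embedding of Proposition~\ref{p:compact}. The one step the paper leaves entirely implicit, closedness of the set in $\Bl{\al}_{p,s}(U)$, you handle correctly via convergence of individual wavelet coefficients (controlled by the seminorms of Proposition~\ref{p:loc-wave}) together with lower semicontinuity of the $\ell^q$-norms, which is precisely what upgrades relative compactness to compactness.
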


\begin{thm}[Tightness criterion] \label{t:tight}
	Recall that $\phi, (\psii)_{1 \le i < 2^d}$ are in $C^r_c$ and such that \eqref{e:def:R} holds, and fix $p \in [1,\infty)$, $q \in [1,\infty]$ and $\al, \be \in \R$ satisfying $|\al|,|\be| < r$, $\al < \be$. Let $(f_m)_{m \in \N}$ be a family of random linear forms on $C^r_c(U)$, and let $\msc K$ be a spanning sequence (see Definition~\ref{def:span}). Assume that for every $(K,k) \in \msc K$, there exists $C = C(K,k) < \infty$ such that for every $m \in \N$,
	\begin{equation}
		\label{e:tight1}
		\sup_{x \in \Lambda_{k} \cap K} \E \Ll[ \Ll| \langle f_m,\phi(2^{k}(\, \cdot \, - x)) \rangle \Rr|^p  \Rr]^{1/p} \le C,
	\end{equation}
	and
	\begin{equation}
		\label{e:tight2}
		\sup_{x \in \Lambda_{n} \cap K} 2^{dn} \ \E \Ll[ \Ll| \langle f_m,\psii(2^n(\, \cdot \, - x)) \rangle \Rr|^p  \Rr]^{1/p} \le C 2^{-n \be} \qquad (i < 2^d, n \ge k).
	\end{equation}
Then the family $(f_m)$ is tight in $\Bl{\al}_{p,q}$. If moreover $\al < \be - \frac d p$, then the family is also tight in $\Cl(U)$.
\end{thm}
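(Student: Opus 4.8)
The plan is to reduce the statement to the compactness results already established --- Corollary~\ref{c:precomp} for the spaces $\Bl{\al}_{p,q}(U)$, and the local Besov embedding of Proposition~\ref{p:loc-emb} for $\Cl(U)$ --- by means of a Markov-type argument, after turning the hypotheses \eqref{e:tight1}--\eqref{e:tight2} into bounds, uniform in $m$, on $\E\Ll[\|f_m\|_{\B^{\al',K,k}_{p,q}}\Rr]$ for a well-chosen exponent $\al'$ and every pair $(K,k)$ in the spanning sequence $\msc K$.

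First I would fix $\al' \in (\al,\be)$: since $\al < \be$ and $|\al|,|\be| < r$, the interval $(\al,\be)$ is a nonempty subset of $(-r,r)$, so such an $\al'$ exists and satisfies $|\al'| < r$. By \eqref{e:def:scaling} and \eqref{e:def:vw} one has $\langle f_m, \phi(2^{k}(\,\cdot\,-x))\rangle = 2^{-dk/2} v_{k,x} f_m$ and $\langle f_m, \psii(2^{n}(\,\cdot\,-x))\rangle = 2^{-dn/2} w\ni f_m$, so that \eqref{e:tight1}--\eqref{e:tight2} read $\E[|v_{k,x}f_m|^p]^{1/p} \le C\, 2^{dk/2}$ and $\E[|w\ni f_m|^p]^{1/p} \le C\, 2^{-dn/2-n\be}$ for $x \in \Ln \cap K$. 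Using that $|\Ln \cap K| \le C_K\, 2^{dn}$, Tonelli's theorem and the definitions \eqref{e:def:vnk}--\eqref{e:def:wnk} then give, after the powers of $2$ cancel, $\E[(v_{k,K,p}f_m)^p] \le C(K,k)$ and $\E[(w_{n,K,p}f_m)^p]^{1/p} \le C(K,k)\, 2^{-n\be}$ for every $n \ge k$. Bounding crudely $\|(a_n)_{n\ge k}\|_{\ell^q} \le \sum_{n\ge k} a_n$ for nonnegative $a_n$ (valid for every $q \in [1,\infty]$) and using Jensen's inequality $\E[X] \le \E[X^p]^{1/p}$ in \eqref{e:def:local-norm}, I obtain
\begin{equation*}
\sup_{m \in \N} \E\Ll[\|f_m\|_{\B^{\al',K,k}_{p,q}}\Rr] \le C(K,k) \Ll( 1 + \sum_{n \ge k} 2^{(\al'-\be)n} \Rr) =: \bar C(K,k) < \infty,
\end{equation*}
where the series converges precisely because $\al' < \be$.

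Next, write $\msc K = \{(K_j,k_j) : j \in \N\}$ and, given $\delta > 0$, set $M_j := 2^j \bar C(K_j,k_j)/\delta$. Markov's inequality and the bound above yield $\P\Ll( \exists\, j \in \N : \|f_m\|_{\B^{\al',K_j,k_j}_{p,q}} > M_j \Rr) \le \sum_{j} \bar C(K_j,k_j)/M_j = \delta$ for every $m$. By Corollary~\ref{c:precomp} (applied with this $\al'$, the given $p$ and $q$, and $s = q$), the set $\{f : \forall\, j,\ \|f\|_{\B^{\al',K_j,k_j}_{p,q}} \le M_j\}$ is compact in $\Bl{\al}_{p,q}(U)$; moreover, by Proposition~\ref{p:crit}, almost surely $f_m \in \Bl{\al}_{p,1}(U) \subset \Bl{\al}_{p,q}(U)$, so the $f_m$ are genuinely $\Bl{\al}_{p,q}(U)$-valued random variables. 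Since $\delta > 0$ is arbitrary and none of the bounds depend on $m$, this proves tightness in $\Bl{\al}_{p,q}(U)$.

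Finally, assume in addition $\al < \be - \frac d p$. Applying the previous two steps with $\al + \frac d p$ in place of $\al$ --- which is legitimate because $-r < \al < \al + \frac d p < \be < r$ --- shows that $(f_m)$ is tight in $\Bl{\al+d/p}_{p,q}(U)$. By Proposition~\ref{p:loc-emb}, used with $p_2 = p$, $p_1 = \infty$, $q_2 = q$, $q_1 = \infty$, the identity map is a continuous embedding $\Bl{\al+d/p}_{p,q}(U) \hookrightarrow \Bl{\al}_{\infty,\infty}(U) = \Cl(U)$, and since continuous maps send compact sets to compact sets, tightness is preserved; hence $(f_m)$ is tight in $\Cl(U)$. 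I expect the only genuinely delicate point to be the exponent bookkeeping: one must keep checking that every regularity index appearing ($\al'$ in the first two steps, and $\al + \frac d p$ in the last one, including as an argument of Proposition~\ref{p:loc-emb}) remains in $(-r,r)$, which is exactly what the assumptions $|\al|,|\be| < r$ and $\al < \be$ ensure; the cancellation of the powers of $2$ in the first step, though a little tedious, is routine.
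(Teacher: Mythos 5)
Your proof is correct and follows essentially the same route as the paper: translate \eqref{e:tight1}--\eqref{e:tight2} into moment bounds on the wavelet coefficients $v_{k,x}f_m$, $w\ni f_m$, sum them into a uniform-in-$m$ bound on expected local Besov seminorms over the spanning sequence, apply Chebyshev/Markov, and invoke Corollary~\ref{c:precomp}, with Proposition~\ref{p:loc-emb} handling the $\Cl(U)$ statement. The only (harmless) differences are that you work at an intermediate exponent $\al' \in (\al,\be)$ and bound first moments of the full $\B^{\al',K,k}_{p,q}$ seminorm --- which in fact sidesteps the summability issue one would face in justifying a bound on $\E\bigl[\|f_m\|^p_{\B^{\be,K,k}_{p,\infty}}\bigr]$ at the endpoint exponent $\be$, as the paper asserts in \eqref{e:sequence-bound} --- whereas the paper works at exponent $\be$ with $q=\infty$ and $p$-th moments, and that for the H\"older case you rerun the argument at $\al+\frac dp$ and then embed, rather than applying Proposition~\ref{p:loc-emb} directly to the moment bound.
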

\begin{proof}

By \eqref{e:def:scaling} and \eqref{e:def:vw}, we have for every $(K,k) \in \msc K$, uniformly over $m$ that
$$
\sup_{x \in \Lambda_{k} \cap K} \E\Ll[ \Ll|v_{k,x} f_m\Rr|^p \Rr] \lesssim 1,
$$
$$
\sup_{x \in \Ln \cap K} 2^{\frac{dnp}{2}} \ \E \Ll[ \Ll| w\ni f_m \Rr|^p  \Rr] \lesssim 2^{-np\be} \qquad (i < 2^d, n \ge k).
$$
Recalling the definition of $v\nzkold$ and $w\nkold$ in \eqref{e:def:vnk} and \eqref{e:def:wnk} respectively, we have
$$
\Ll|v\nzkold f_m\Rr|^p \lesssim \sum_{x \in \Lambda_{k} \cap K} \Ll|v_{k,x} f_m\Rr|^p,
$$
so that 
$$
\E\Ll[\Ll|v\nzkold f_m\Rr|^p \Rr] \lesssim 1.
$$
Similarly,
$$
\Ll|w\nkold f_m\Rr|^p \lesssim 2^{dn \Ll( \frac{p}{2} - 1 \Rr) }\sum_{i < 2^d, x \in \Lambda_{k} \cap K} \Ll|w\ni f_m\Rr|^p,
$$
so that
$$
\E\Ll[\Ll|w\nkold f_m\Rr|^p \Rr] \lesssim 2^{-np\be}.
$$
It follows from these two observations and from \eqref{e:def:local-norm} that
\begin{equation} \label{e:sequence-bound}
\sup_{m \in \N} \E\Ll[\|f_m\|^p_{\B^{\be,K,k}_{p,\infty}} \Rr] < \infty.
\end{equation}
By Chebyshev's inequality, for any given $\eps > 0$, there exist $(M_K)$ such that if we set
$$
\msc E := \Ll\{f \text{ linear form on } C^r_c(U) \text{ such that } \forall (K,k) \in \msc K, \ \|f\|_{\B^{\be,K,k}_{p,\infty}} \le M_K \Rr\},
$$
then for every $m$,
$$
\P[f_m \in \msc E] \ge 1-\eps.
$$
By Corollary~\ref{c:precomp}, this implies the tightness result in $\Bl{\al}_{p,q}(U)$. For the second statement, we note that \eqref{e:sequence-bound} and Proposition~\ref{p:loc-emb} imply that 
\begin{equation*}
\sup_{m \in \N}\E\Ll[\|f_m\|^p_{\B^{\be-d/p,K,k}_{\infty,\infty}} \Rr] < \infty .
\end{equation*}
The conclusion then follows in the same way.
\end{proof}
\begin{rem}  
We can also infer from the proof that for each $\chi \in C^\infty_c(U)$, there exists a constant $\widetilde C_\chi$ such that under the assumption of Theorem~\ref{t:tight}, we have
\begin{equation*}  
\sup_{m \in \N}  \E \Ll[ \|\chi f_m\|_{\B^{\be}_{p,\infty}}^p \Rr] < \widetilde C_\chi C,
\end{equation*}
as well as
\begin{equation*}  
\sup_{m \in \N}  \E \Ll[ \|\chi f_m\|_{\C^{\be - \frac{d}{p}}}^p \Rr] < \widetilde C_\chi C.
\end{equation*}
\end{rem}



%

We conclude this section by proving a statement analogous to the Kolmogorov continuity theorem. Recalling from Remark~\ref{r.really.holder} the interpretation of the space~$\C^\al$ as a H\"older space, the satement below can indeed be seen as a generalization of the classical result of Kolmogorov. (The fact that the statement can apply to positive exponents of regularity is due to the cancellation property \eqref{e:vanish.moment}.)
\begin{prop} \label{p:kolmotype} 
Let $(f(\eta), \eta \in C^r_c(U))$ be a family of random variables such that, for every $\eta, \eta' \in C^r_c(U)$ and every $\mu \in \R$, there exists a measurable set $A=A(\mu,\eta, \eta')$ with $\P(A)=1$ such that
\begin{equation}  
\label{e.linearity}
f(\mu \eta + \eta') (\omega)= \mu f(\eta) (\omega) + f(\eta') (\omega) \qquad \forall \omega \in A.
\end{equation}
Assume also the following weak continuity property: for each compact $K' \subset U$ and each sequence $\eta_n, \eta \in C^r_c(U)$ with $\supp \eta_n \subset K'$, we have
\begin{equation*}  
\eta_n \xrightarrow[n \to \infty]{\text{in } C^{r-1}_c} \eta \quad \implies \quad f(\eta_n) \xrightarrow[n \to \infty]{\text{prob.}} f(\eta).
\end{equation*}
Let $p \in [1,\infty)$, $q \in [1,\infty]$, and let $\al, \be \in \R$ be such that $|\al|, |\be| < r$ and $\al < \be$. Let $\msc K$ be a spanning sequence, and assume finally that, for every $(K, k) \in \msc {K}$, there exists $ C > 0$ such that for every $n \geqslant k$, 
  \begin{equation*} \sup_{x \in \Lambda_k \cap K} \mathbb{E} \left[ \left| f\Ll(\phi (2^k
     (\cdot - x)) \right) \right|^p \right]^{\frac{1}{p}} \leqslant C 
     \end{equation*}
  and
  \begin{equation*} \sup_{x \in \Lambda_n \cap K} 2^{dn} \mathbb{E} \left[ \left| f\left(  \psi (2^n
     (\cdot - x)) \right) \right|^p \right]^{\frac{1}{p}} \leqslant C 2^{- n \beta}. \end{equation*}
     Then there exists a random distribution $\td f$ taking values in $\Bl{\al}_{p,q}(U)$ such that for every $\eta \in C^r_c(U)$,
     \begin{equation}  
     \label{e.identity.tdf.f}
     \Ll( \td f, \eta \Rr) = f(\eta) \qquad \text{a.s.}
     \end{equation}
     Moreover, if $\al < \be - \frac d p$, then $\td f$ takes values in~$\Cl(U)$ with probability one.
\end{prop}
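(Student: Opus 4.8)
The plan is to reduce the statement to the countable family of (random) wavelet coefficients of $f$ and then to invoke the criterion of Proposition~\ref{p:crit}. Fix once and for all $\al' \in \Q \cap (\al,\be)$ with $|\al'| < r$. For an adapted pair $(K,k)$ and $x \in \Lambda_n \cap K$ with $n \ge k$, the functions $\phi_{k,x}$ and $\psi_{n,x}^{(i)}$ belong to $C^r_c(U)$ (the condition \eqref{e:def:n0} puts their supports inside $U$), so $v_{k,x} := f(\phi_{k,x})$ and $w_{n,x}^{(i)} := f(\psi_{n,x}^{(i)})$ are bona fide random variables. Using the a.s.\ linearity \eqref{e.linearity} and the two moment hypotheses, the computation carried out in the proof of Theorem~\ref{t:tight} gives $\E[v_{k,K,p}(f)] \lesssim 1$ and $\E[w_{n,K,p}(f)] \lesssim 2^{-n\be}$, with the notation of \eqref{e:def:vnk}--\eqref{e:def:wnk}, whence, since $\al' < \be$,
\[
\E\Bigl[ v_{k,K,p}(f) + \sum_{n \ge k} 2^{\al' n} w_{n,K,p}(f) \Bigr] \lesssim 1 + \sum_{n} 2^{-n(\be - \al')} < \infty .
\]
By Tonelli, the quantity built from the $v$'s and $w$'s by formula \eqref{e:def:local-norm}, which we still denote $\|\,\cdot\,\|_{\B^{\al',K,k}_{p,1}}$, is a.s.\ finite; intersecting over the countable set $\msc K$ and over $\al' \in \Q \cap (\al,\be)$ yields an event $\Omega_0$ of full probability on which all these seminorms are finite.

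Next I build the local pieces. We may assume the scales $k_n$ of the spanning sequence are nondecreasing with $k_n \to \infty$ (replacing $k_n$ by $\max(k_1,\dots,k_n,n)$ preserves \eqref{e:def:n0} and the defining properties of a spanning sequence), so that, writing $K_n'$ for the set \eqref{e:def:K'} attached to $(K_n,k_n)$, we have $K_n' \subset K_{n+1}'$ and $\bigcup_n \mathsf{int}(K_n') = U$. For each $n$ set
\[
g_n := \sum_{x \in \Lambda_{k_n}\cap K_n} v_{k_n,x}\,\phi_{k_n,x} + \sum_{\substack{m \ge k_n,\ i < 2^d\\ x \in \Lambda_m \cap K_n}} w_{m,x}^{(i)}\,\psi_{m,x}^{(i)} .
\]
Truncating the sum over $m$ at level $N$ gives a function $g_{n,N} \in C^r_c(U)$ depending measurably on $\omega$, and on $\Omega_0$ the sequence $(g_{n,N})_N$ is Cauchy for each seminorm $\|\,\cdot\,\|_{\B^{\al',K,k}_{p,1}}$ (the difference of two truncations only activates coefficients beyond some scale, i.e.\ a convergent tail), hence converges in the complete separable (Remarks~\ref{r:separ} and \ref{r:topo-loc}) Fréchet space $\Bl{\al'}_{p,1}(U)$. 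Thus $g_n$ defines a measurable $\Bl{\al'}_{p,1}(U)$-valued random variable, being an a.s.\ limit of measurable maps into a Polish space.

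I then identify $g_n$ with $f$ on $\mathsf{int}(K_n')$ and glue. Fix $\eta \in C^r_c(U)$ and $n$ with $\supp \eta \subset \mathsf{int}(K_n')$. As in the proof of Proposition~\ref{p:crit}, the wavelet decomposition $\eta = \V_{k_n}\eta + \sum_{m \ge k_n}\W_m\eta$ involves only the $\phi_{k_n,x},\psi_{m,x}^{(i)}$ with $x \in K_n$; its partial sums $\eta_N := \V_{k_n}\eta + \sum_{k_n \le m \le N}\W_m\eta$ are supported in the fixed compact $K_n + \bar B(0,2^{-k_n}R) \subset U$ and converge to $\eta$ in $C^{r-1}$ --- the one fact I borrow beyond what has been recalled, namely that an $r$-regular multiresolution analysis reproduces $C^{r-1}$ functions with convergence in $C^{r-1}$ (see \cite{mey}). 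The weak continuity hypothesis then yields $f(\eta_N) \to f(\eta)$ in probability; by a.s.\ finite linearity, $f(\eta_N)$ equals the pairing of $g_{n,N}$ with $\eta$, which converges to $(g_n,\eta)$ since $g_{n,N}\to g_n$ in $\Bl{\al'}_{p,1}(U)$ and evaluation at a fixed test function is continuous there (it is a distribution of order $\le r$, as $|\al'| < r$). Hence $(g_n,\eta) = f(\eta)$ a.s., and in particular $(g_n,\eta) = (g_m,\eta)$ a.s.\ for $m \ge n$ and all $\eta$ supported in $\mathsf{int}(K_n')$ --- upgraded from "each $\eta$" to "all $\eta$" via a countable dense family of such test functions and the same continuity. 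Therefore the $g_n$ agree on $\mathsf{int}(K_n')$ and glue along the open cover $(\mathsf{int}(K_n'))_n$ of $U$ into a single measurable linear form $\td f$ on $C^r_c(U)$ with $\td f = g_n$ on $\mathsf{int}(K_n')$. As every $\phi_{k,x},\psi_{n,x}^{(i)}$ appearing in the first step has support inside some $\mathsf{int}(K_m')$, the wavelet coefficients of $\td f$ are precisely the $v,w$ above, so $\|\td f\|_{\B^{\al',K,k}_{p,1}}<\infty$ on $\Omega_0$ for all $(K,k)\in\msc K$, and Proposition~\ref{p:crit} gives $\td f \in \Bl{\al}_{p,1}(U) \subset \Bl{\al}_{p,q}(U)$ with $(\td f,\eta) = f(\eta)$ a.s., which is \eqref{e.identity.tdf.f}. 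For the last claim, when $\al < \be - \tfrac{d}{p}$ one takes $\al'' \in \Q \cap (\al + \tfrac{d}{p},\be)$ in the first step; Proposition~\ref{p:loc-emb} converts the bound on $\|\td f\|_{\B^{\al'',K,k}_{p,1}}$ into one on $\|\td f\|_{\B^{\al'' - d/p,K,k}_{\infty,1}}$, and Proposition~\ref{p:crit} (with exponents $\infty,1$ and regularity $\al'' - \tfrac{d}{p} > \al$) gives $\td f \in \Bl{\al}_{\infty,1}(U) \subset \Cl(U)$ a.s.

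The main obstacle is not computational --- the estimates are a verbatim repetition of the proof of Theorem~\ref{t:tight} --- but lies in promoting the merely pointwise-in-$\eta$ hypotheses (almost sure additivity, convergence in probability) to one genuine, almost surely defined, $\Bl{\al}_{p,q}(U)$-valued random variable. This forces one to work entirely through the countable set of wavelet coefficients, to exploit separability and density of test functions supported in a fixed compact in order to trade "a.s.\ for each $\eta$" for "a.s.\ for all $\eta$", and --- crucially --- to know that the wavelet partial sums of a $C^r_c$ function converge in $C^{r-1}$, so that the continuity hypothesis on $f$ can actually be applied. Once everything is expressed coefficient by coefficient, the consistency of the local pieces $g_n$ and the Borel measurability of $\td f$ into the Fréchet space become routine.
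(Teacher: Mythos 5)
Your proof is correct and follows essentially the same route as the paper's: the wavelet coefficients are bounded exactly as in the proof of Theorem~\ref{t:tight}, $\td f$ is built as a limit of truncated wavelet sums localized along the spanning sequence, and the identity \eqref{e.identity.tdf.f} is obtained from the $C^{r-1}$ convergence of the wavelet expansion of $\eta$ (via \eqref{e:vanish.moment}) combined with the weak continuity and a.s.\ linearity hypotheses. The only differences are organizational: you use Tonelli on the $\ell^1$-in-$n$ local seminorms and conclude through Propositions~\ref{p:crit} and~\ref{p:loc-emb}, whereas the paper uses Chebyshev and Borel--Cantelli to get a.s.\ Cauchy sequences in $\B^{\be'}_{p,\infty}$ and then localizes; your added attention to measurability and to upgrading ``a.s.\ for each $\eta$'' to ``a.s.\ for all $\eta$'' just makes explicit what the paper leaves implicit.
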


\begin{proof}
For every $(K,k) \in \msc K$ and $N \in \N$, we define
$$
\tilde{f}_{N,k} := \sum_{x \in \Lambda_{k} \cap K} v_{k,x} (f) \, \phi_{k,x} + \sum_{\substack{k \le n \le N, i < 2^d\\ x \in \Ln \cap K}}  w\ni (f) \ \psi\ni ,
$$
where we set
\begin{equation*}  
v_{k,x}(f) := f(\phi_{k,x}) \quad \text{ and } \quad w\ni(f) = f(\psi\ni).
\end{equation*}
Cleary, $\tilde{f}_{N,k}$ is almost surely in $C^r_c$. Following the proof of Theorem~\ref{t:tight}, we get:
$$
\E \Ll[ 2^{dn (\frac{p}{2}-1)} \sum_{x \in \Lambda_n \cap K, i < 2^d} |  w\ni (f)  |^p  \Rr] \lesssim 2^{-n p \beta} ,
$$
where the implicit constant does not depend on $n$. Hence, for each $\be' <  \be$ and each fixed integer $k$, we deduce by the Chebyshev inequality and the Borel-Cantelli lemma that $(\tilde f_{N,k})_N$ is a Cauchy sequence in $\B^{\be'}_{p,\infty}$ with probability one. We denote the limit by~$\tilde f_k$. It is clear that $\tilde f_k$ converges to some element $\tilde f$ of $\Bl{\be'}_{p,\infty}(U)$ as $k$ tends to infinity, since for each $\chi \in C^r_c$ with compact support in $U$, the sequence $\chi \td f_k$ is eventually constant as $k$ tends to infinity. By Proposition~\ref{p:loc-emb}, if $\al < \be - \frac d p$, then $\td f \in \Cl(U)$ with probability one. There remains to check that for every $\eta  \in C^r_c(U)$, the identity \eqref{e.identity.tdf.f} holds. By the orthogonality properties of $(\phi_{k,x}, \psi\ni)$ and the fact that $\eta$ has compact support in $U$, we have, for $k$ sufficiently large,
\begin{equation*}  
\eta = \sum_{x \in \Lambda_{k} \cap K} (\phi_{k,x}, \eta) \, \phi_{k,x} + \lim_{N \to + \infty} \sum_{\substack{k \le n \le N, i < 2^d\\ x \in \Ln \cap K}}  (\psi\ni,\eta) \ \psi\ni ,
\end{equation*}
where we recall that $(\cdot, \cdot)$ denotes the scalar product of $L^2(\R^d)$. We fix such $k$ sufficiently large, and denote
\begin{equation*}  
\eta_N := \sum_{x \in \Lambda_{k} \cap K} (\phi_{k,x}, \eta) \, \phi_{k,x} +  \sum_{\substack{k \le n \le N, i < 2^d\\ x \in \Ln \cap K}}  (\psi\ni,\eta) \ \psi\ni .
\end{equation*}
By a Taylor expansion of $\eta$ and \eqref{e:vanish.moment}, one can check that there exists $C(d,\eta) < \infty$ such that
\begin{equation*}  
2^{\frac{dn}{2}}\Ll|(\psi\ni,\eta)\Rr| \le C 2^{-rn}.
\end{equation*}
From this, together with the expressions for $\eta_N$ and $\eta$ above, we obtain that $\exists C(d,\eta) < \infty$ such that for any multi-index $\alpha \leqslant |r|$
$$
\| \partial^\alpha \eta - \partial^\alpha \eta_N \|_{L^\infty} < C \sum_{n > N} 2^{-r n} 2^{|\alpha | n}
$$ 
and thus
$$
\eta_N \xrightarrow[N \to \infty]{\text{in } C^{r-1}_c} \eta.
$$
Therefore by the weak continuity assumption, we deduce that
$$
f(\eta_N) \xrightarrow[N \to \infty]{\text{prob.}} f(\eta).
$$
In order to conclude, there remains to verify that
\begin{equation*}  
(\td f,\eta_N) = f(\eta_N) \qquad \text{a.s.}
\end{equation*}
This follows from the assumption \eqref{e.linearity}.
\end{proof}

\section{Application to the critical Ising model} \label{s:isingapp}

In this section, we apply the tightness criterion presented in Theorem~\ref{t:tight} to the magnetization field of the two-dimensional Ising model at the critical temperature. We will use extensively some basic notions related to the FK percolation model \cite{fk} and its relation to the Ising model via the Edwards-Sokal coupling \cite{es}. 

\subsection{Introduction to the random cluster model}
The random cluster model, or FK percolation model, was first introduced in \cite{fk}. We refer to \cite{grim} for a comprehensive book on the subject.


  
Let $\Lambda$ be a finite subset of $\Z^d$, $\Omega = \{0,1\}^{\E^d}$ with $\E^d$ the set of edges of the graph $\Z^d$, and $\mathcal{F}$ be the $\sigma$-algebra generated by cylinder sets. For $\omega \in \Omega$, let $\omega_e$ be the component of $\omega$ at $e \in \E^d$. Let $E_\Lambda = \{e=\langle x , y \rangle \in \E^d \mid x \in \Lambda, y \in \Lambda \}$ the set of edges with both endpoints in $\Lambda$. For $\xi \in \Omega$, define the following finite subset of $\Omega$:
\begin{equation*}
\Omega^\xi_\Lambda = \{ \omega \in \Omega \mid \omega_e = \xi_e  \quad \forall  e \in \E^d  \setminus E_\Lambda \}.
\end{equation*}
\begin{defi} \label{def:phiboundary}
Let $p \in [0,1]$, $q \in (0,\infty)$. The FK probability measure on $(\Omega,\mathcal{F})$ with boundary condition $\xi$ is
\begin{equation}
\phi^\xi_{\Lambda,p,q}(\omega) = \begin{cases} \frac{1}{Z_{\xi,\Lambda}} \left[ \prod_{e \in E_\Lambda} p^{\omega_e} (1-p)^{1-\omega_e} \right] q^{k(\omega)}  &\mbox{if } \omega \in \Omega^{\xi}_\Lambda \\ 
0 & \mbox{otherwise }  \end{cases} 
\end{equation} 
with $Z_{\xi,\Lambda} (p,q) = \sum_{\omega \in \Omega^\xi_\Lambda} \left[ \prod_{e \in E_\Lambda} p^{\omega_e} (1-p)^{1-\omega_e} \right] q^{k(\omega)}$ and $k(\omega)$ the number of connected components of the graph $(\Z^d,\eta(\omega))$, with $\eta(\omega) = \{e \in \E^d \mid \omega_e =1 \}$. 
\end{defi}
We will call the edge $e$ \emph{open} if $\omega_e =1$, and \emph{closed} otherwise. We call \emph{open clusters} the connected components of $(\Z^d,\eta(\omega))$, and write $x \leftrightarrow y$ if $x,y$ are in the same open cluster, $x \nleftrightarrow y$ otherwise. An \emph{open path} is a (possibly infinite) sequence $(e_i)$ of edges belonging to $\eta(\omega)$. The boundary condition is \emph{free} if $\xi_e =0$ $\forall e \in \E^d$ and \emph{wired} if $\xi_e =1$ $\forall e \in \E^d$.


\begin{rem}\label{r:indep}
For both free and wired boundary conditions, if the domain $\Lambda$ is the union of two subsets $\Lambda_1$ and $\Lambda_2$ such that $E_{\Lambda_1} \cap E_{\Lambda_2} = \emptyset$, then the configurations on $\Lambda_1$ and $\Lambda_2$ are independent. Indeed, calling $\overline{k(\omega , \E^d \setminus E_\Lambda)}$ the number of open clusters of $\omega$ that do not intersect $\E^d \setminus E_\Lambda$, we have $\overline{k(\omega , \E^d \setminus E_\Lambda)} = \overline{k(\omega , \E^d \setminus E_{\Lambda_1})} + \overline{k(\omega , \E^d \setminus E_{\Lambda_2})}$.
\end{rem}

Although in general the states on two different edges are not independent, the model exhibits a ``domain Markov'' \cite{d-chn} or ``nesting'' \cite{grim} property. Let $\mathcal{F}_{\Lambda}$ (respectively $\mathcal{T}_\Lambda$) be the $\sigma$-algebra generated by the states of edges in $E_\Lambda$ (respectively in $\E^d \setminus E_\Lambda$). We have the following result. 
\begin{lem}[{\cite[Lemma~4.13]{grim}}] \label{l:domainmarkov}
Let $p \in [0,1]$, $q \in (0,\infty)$, and let $\Lambda, \Delta$ be finite subsets of $\Z^d$ with $\Lambda \subset \Delta$. For every $\xi \in \Omega$, every event $A \in \mathcal{F}_{\Lambda}$ and every $\omega \in \Omega_\Delta^\xi$,
\begin{equation}
\phi_{\Delta,p,q}^\xi (A \mid \mathcal{T}_{\Lambda})(\omega) = \phi_{\Lambda,p,q}^\omega (A) .
\end{equation}
\end{lem}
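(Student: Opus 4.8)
The plan is to reduce the assertion to an elementary computation with the FK weights and to isolate the one genuinely combinatorial point, namely the behaviour of the cluster count $k(\,\cdot\,, E_\Delta)$ under changes of the configuration on $E_\Lambda$. Under $\phi^\xi_{\Delta,p,q}$ only the (finitely many) edges of $E_\Delta$ are random, the others being frozen to $\xi$, and since $E_\Lambda \subset E_\Delta$ the $\sigma$-algebra $\mathcal{T}_\Lambda$ coincides, modulo $\phi^\xi_{\Delta,p,q}$-null sets, with the $\sigma$-algebra generated by the states of the edges in $E_\Delta \setminus E_\Lambda$. It suffices to treat $p \in (0,1)$, in which case every element of $\Omega^\xi_\Delta$ carries positive mass (the cases $p \in \{0,1\}$ being degenerate and, in any event, conditional probabilities being defined only up to a $\mathcal{T}_\Lambda$-null set). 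Then, for $\omega \in \Omega^\xi_\Delta$, the left-hand side is the elementary ratio
\[
\phi^\xi_{\Delta,p,q}(A \mid \mathcal{T}_\Lambda)(\omega)
= \frac{\phi^\xi_{\Delta,p,q}\Ll(A \cap \{\nu : \nu = \omega \text{ off } E_\Lambda\}\Rr)}{\phi^\xi_{\Delta,p,q}\Ll(\{\nu : \nu = \omega \text{ off } E_\Lambda\}\Rr)},
\]
and, because $A \in \mathcal{F}_\Lambda$ depends on $\nu$ only through its restriction to $E_\Lambda$, both numerator and denominator are sums of FK weights over configurations $\nu \in \Omega^\omega_\Lambda$.

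First I would expand, for such $\nu$, the weight $\phi^\xi_{\Delta,p,q}(\nu) \propto \prod_{e \in E_\Delta} p^{\nu_e}(1-p)^{1-\nu_e}\, q^{k(\nu,E_\Delta)}$ and split $\prod_{e \in E_\Delta} = \prod_{e \in E_\Lambda}\prod_{e \in E_\Delta \setminus E_\Lambda}$; the second factor equals $\prod_{e \in E_\Delta \setminus E_\Lambda} p^{\omega_e}(1-p)^{1-\omega_e}$, which does not depend on $\nu|_{E_\Lambda}$. The crux is then the identity
\[
k(\nu, E_\Delta) = k(\nu, E_\Lambda) + m ,
\]
where $m$ counts the open clusters of $\nu$ that meet $E_\Delta$ but not $E_\Lambda$, and where $m$ is independent of $\nu|_{E_\Lambda}$. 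I would prove this by partitioning the open clusters meeting $E_\Delta$ according to whether they also meet $E_\Lambda$: the first family is exactly the set of clusters counted by $k(\nu, E_\Lambda)$ (here one uses $E_\Lambda \subset E_\Delta$), while a cluster meeting $E_\Delta$ but containing no endpoint of an edge of $E_\Lambda$ is left unchanged when $\nu$ is modified on $E_\Lambda$ — toggling $\nu_e$ for $e \in E_\Lambda$ can only merge or split clusters containing an endpoint of $e$, and both endpoints of such an $e$ are incident to $E_\Lambda$. Hence the second family, and thus $m$, depends only on the restriction of $\omega$ to $E_\Delta \setminus E_\Lambda$.

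Plugging this in, for every $\nu \in \Omega^\omega_\Lambda$ one obtains $\phi^\xi_{\Delta,p,q}(\nu) = C(\omega)\prod_{e \in E_\Lambda} p^{\nu_e}(1-p)^{1-\nu_e}\, q^{k(\nu,E_\Lambda)} = C(\omega)\, Z_{\omega,\Lambda}\,\phi^\omega_{\Lambda,p,q}(\nu)$, where $C(\omega)$ collects all factors that do not involve $\nu|_{E_\Lambda}$ (the $E_\Delta \setminus E_\Lambda$-product, the factor $q^{m}$, and the normalisation $Z_{\xi,\Delta}^{-1}$). Summing over $\nu \in A \cap \Omega^\omega_\Lambda$ in the numerator and over all $\nu \in \Omega^\omega_\Lambda$ in the denominator, the common prefactor $C(\omega)\, Z_{\omega,\Lambda}$ cancels and the ratio reduces to $\phi^\omega_{\Lambda,p,q}(A)$, which is the claim. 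I expect the main obstacle to be the cluster-count bookkeeping of the second step — making precise that clusters away from $E_\Lambda$ are insensitive to $\nu|_{E_\Lambda}$ and that the two families indeed partition the clusters meeting $E_\Delta$; everything else is a routine manipulation of finite sums, and one recognises this statement as the classical domain Markov (or nesting) property, cf.\ \cite[Lemma~4.13]{grim}.
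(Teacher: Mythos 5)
Your proof is correct: the paper itself does not prove this lemma but simply cites \cite[Lemma~4.13]{grim}, and your argument is precisely the standard one behind that reference — condition on the atom $\{\nu:\nu=\omega \text{ off } E_\Lambda\}$, factor the FK weight over $E_\Delta\setminus E_\Lambda$ and $E_\Lambda$, and use the decomposition $k(\nu,E_\Delta)=k(\nu,E_\Lambda)+m(\omega)$ with $m(\omega)$ counting clusters touching $E_\Delta$ but not $E_\Lambda$, which depends only on the configuration off $E_\Lambda$. The combinatorial point you single out is indeed the only nontrivial step, and your justification of it (clusters avoiding the endpoints of $E_\Lambda$-edges are unaffected by toggling edges of $E_\Lambda$), together with the caveat for $p\in\{0,1\}$, is sound.
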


The set $\Omega=\{ 0,1 \}^{\E^d}$ has a partial ordering given by $\omega \leqslant \omega'$ if $\forall e \in \E^d$ $\omega_e \leqslant \omega'_{e}$. A function $X : \Omega \rightarrow \R$ is called \emph{increasing} if $\omega \leqslant \omega' \Rightarrow X(\omega) \leqslant X(\omega')$. Likewise, an event $A \in \mathcal{F}$ is called increasing if the random variable $\mathbbm{1}_A$ is increasing. 
As a direct consequence of the FKG inequality and \cite[Lemma~4.14]{grim}, we have the following monotonicity properties.


\begin{lem} \label{l:fkg} 
Let $p \in [0,1]$, $q \geqslant 1$ and $\Lambda \subset \Delta \subset \Z^d$ finite sets. Then:
\begin{itemize}
\item For every $\eta \leqslant \xi \in \Omega$ and for every increasing event $A$:
\begin{equation*}
\phi^\eta_{\Lambda,p,q} (A) \leqslant \phi^\xi_{\Lambda,p,q} (A).
\end{equation*} 
\item For every increasing event $A \in \mathcal{F}_{\Lambda}$:
\begin{equation*}
\phi^1_{\Delta,p,q} (A) \leqslant \phi^1_{\Lambda,p,q} (A)
\end{equation*} 
\end{itemize}
\end{lem}


For $p \in [0,1]$, $q \geqslant 1$, the random cluster measure $\phi^\xi_{\Lambda,p,q}$ for both free and wired boundary conditions admits a thermodynamic limit as $\Lambda \rightarrow \Z^d$ \cite[Theorems~4.17 and~4.19]{grim}, which we call $\phi^\xi_{p,q}$. 
For every boundary condition $\xi$ such that $\phi^\xi_{\Lambda,p,q}$ admits a limit and every increasing event $A$, we have
\begin{equation*}
\phi^0_{p,q}(A) \leqslant \phi^\xi_{p,q}(A) \leqslant \phi^1_{p,q} (A).
\end{equation*}

\subsection{Relation with the 2-d Ising model} \label{s:escoupling}
Now consider the Ising-Potts model on a finite set $\Lambda \subset \Z^d$ as follows. Take a configuration space $\Sigma^0_{\Lambda} = \{ -1, 1 \}^{\Lambda}$. 
The Ising probability measure with \emph{free} boundary condition on $\Lambda$ is defined by
\begin{equation} \label{e:def:isingfree}
\pi_{G}^0(\sigma) = \frac{1}{Z_I} e^{ - \beta H(\sigma)}  \qquad H(\sigma)=- \sum_{e \in E_\Lambda} \mathbbm{1}_{\sigma_e = 1},
\end{equation} 
with $\beta > 0$, $\sigma_e = \sigma_x \sigma_y$  and  $Z^0_I  (\beta)= \sum_{\sigma \in \Sigma^0_\Lambda} e^{ - \beta H(\sigma)}$ .

Similarly, let $\Sigma^1_\Lambda = \{ \sigma \in \{ -1, 1 \}^{\Lambda} \mid \sigma_x = 1 \mbox{ } \forall x \in \partial \Lambda \}$. The Ising probability measure with $+$  boundary condition on $\Lambda$ is defined  as
\begin{equation} \label{e:def:isingplus}
\pi^{1}_{\Lambda}(\sigma) = \frac{1}{Z^1_I} e^{ - \beta H(\sigma)} \mathbbm{1}_{\Sigma^1_\Lambda}(\sigma) 
\end{equation}
 with $Z_I^1  (\beta)= \sum_{\sigma \in \Sigma^1_\Lambda} e^{ - \beta H(\sigma)}$ .
Random variables $\sigma_x$ for $x \in \Z^d$ are called spins.

\begin{rem}
Traditionally, the Hamiltonian of the Ising model is written as 
$$
H'(\sigma)=- \sum_{x \sim y} \sigma_x \sigma_y
$$
with $x \sim y$ nearest neighbours. Defining $\lambda_{\beta}(\sigma) \propto e^{ - \beta' H'(\sigma)}$ for the usual Ising measure, we recover it as $\lambda_{\beta/2} \sim \pi_{\beta}$.
\end{rem}

The Edwards-Sokal coupling on $\Lambda$ with boundary condition $\xi\in \{ 0,1 \}$ consists of defining the probability measure on $\Sigma^\xi_\Lambda \times \Omega$
\begin{equation} \label{e:def:escoupling}
\mu^\xi_\Lambda (\sigma, \omega) = \frac{1}{Z^\xi_{ES}} \prod_{e \in E^\xi_\Lambda} \left[ (1-p) \mathbbm{1}_{\omega_e = 0} + p \mathbbm{1}_{\omega_e = 1} \mathbbm{1}_{\sigma_e = 1}   \right]  \mathbbm{1}_{\Omega^\xi_\Lambda}(\omega)
\end{equation}
with $Z^\xi_{ES}$ such that $\sum_{(\sigma,\omega) \in \Sigma^\xi_\Lambda \times \Omega} \mu_\Lambda^\xi(\sigma , \omega ) =1$. 
From now on we fix
\begin{equation}
e^{-\beta} = 1-p \quad \mbox{ and } \quad q = 2.
\end{equation}

It is easy to obtain the following lemma (see \cite{grim}).
\begin{lem}\label{l:escoupling}
Let $p \in [0,1]$, $e^{-\beta} = (1-p)$, $q = 2$ and $\xi \in \{ 0,1 \}$. Let $\mu^\xi_\Lambda$ be defined as in \eqref{e:def:escoupling}. Then:
\begin{itemize}
\item The marginal of $\mu^\xi_\Lambda$ on $\Sigma_\Lambda^\xi$ is $\pi^{\xi}_{\Lambda}$.
\item The marginal of $\mu^\xi_\Lambda$ on $\Omega$ is $\phi^\xi_{\Lambda,p,2}$.
\end{itemize}
\end{lem}

In order to characterize the regularity of the Ising magnetization field $\Phi_a$ on an unbounded domain $U \subseteq \R^2$, in the next sections we will use the well-known FK-Ising coupling for infinite volume measures.
\begin{thm}[{\cite[Theorem~4.91]{grim}}] \label{t:escoupling:infinite}
Let $p \in [0,1]$, $q = 2$, $e^{-\beta} = (1-p)$.
\begin{itemize}
\item Let $\omega$ be sampled from $\Omega = \{ 0,1 \}^{\E^2}$ with law $\phi^1_{p,q}$. Conditional on $\omega$, each vertex is assigned a random spin $\sigma_x \in \{ -1,+1 \}$ such that:
\begin{enumerate}
\item $\sigma_x =1 $ if  $x \leftrightarrow \infty$
\item $\sigma_x$ takes values in $\{ -1, 1 \}$ with probability $\frac{1}{2}$ if $x \nleftrightarrow \infty$
\item $\sigma_x = \sigma_y $ if $x \leftrightarrow y$
\item spins in different open clusters are independent.
\end{enumerate}
Then the configuration $\sigma = \{ \sigma_x \}_{x \in \Z^d}$ is distributed according to the weak limit $\pi^1 $ of Ising measures with $+$ boundary condition.
\item Let $\sigma $ be sampled from $\Sigma = \{ -1, +1 \}^{\Z^d}$ with the Ising limit law $\pi^1$.  Conditional on $\sigma$, each edge is assigned a random state $\omega_e \in \{ 0,1 \}$ such that:
\begin{enumerate}
\item the states of different edges are independent
\item $\omega_e = 0$ if $\sigma_x \neq \sigma_y$
\item if $\sigma_x = \sigma_y$, then $\omega_e = 1$ with probability $p$ and $0$ otherwise.
\end{enumerate}
Then the edge configuration $\omega = \{ \omega_e \}_{e \in \E^2}$ has law $\phi^1_{p,q}$.
\end{itemize}
\end{thm}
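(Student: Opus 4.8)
The plan is to realize the infinite-volume coupling as a weak limit of the finite-volume Edwards--Sokal couplings $\mu^+_{R_n}$ of \eqref{e:def:escoupling}, taken along an increasing sequence of rectangles $R_n \to \Z^2$. Since $\Sigma \times \Omega = \{-1,+1\}^{\Z^2} \times \{0,1\}^{\E^2}$ is compact in the product topology, the sequence $(\mu^+_{R_n})$ has weak limit points, and it suffices to show that every such limit point $\mu^+$ has the two conditional structures described in the statement; once the two marginals and both families of conditionals are pinned down, the joint law is determined on all cylinder events, so the limit point is unique, the whole sequence converges, and reading off the two conditional descriptions of $\mu^+$ gives the two bullets of the theorem.

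\textbf{Marginals and the easy (local) conditional.} First I would record that, by Lemma~\ref{l:escoupling}, the $\Sigma$-marginal of $\mu^+_{R_n}$ is $\pi^+_{R_n}$ and the $\Omega$-marginal is $\phi^1_{R_n,p,2}$, which converge weakly to $\pi^+$ and to $\phi^1_{p,2}$ respectively (by the definition of $\pi^+$ and by Theorem~\ref{t:weaklimits}); hence any limit point $\mu^+$ has $\Sigma$-marginal $\pi^+$ and $\Omega$-marginal $\phi^1_{p,2}$. The conditional law of $\omega$ given $\sigma$ is the easy half: in $\mu^+_{R_n}$, conditionally on $\sigma$, the edges of $E_{R_n}$ are independent with $\omega_e = 0$ when $\sigma_x \neq \sigma_y$ and $\omega_e = 1$ with probability $p$ when $\sigma_x = \sigma_y$, while the (finitely many relevant) edges outside $E_{R_n}$ are eventually not constrained once $n$ is large. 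Testing against a cylinder event depending on finitely many spins and finitely many edges and letting $n \to \infty$, this prescription passes to the limit verbatim and yields the rule (1)--(3) of the second bullet for $\mu^+$.

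\textbf{The substantive (non-local) conditional.} In $\mu^+_{R_n}$, conditionally on $\omega$: every vertex outside $R_n$ has spin $+1$; every open cluster of $\omega$ that is connected to $\Z^2 \setminus R_n$ gets spin $+1$ (it is wired to the exterior); and each of the remaining clusters — those whose vertices, hence whose edges, lie inside $R_n$ — is given an independent fair $\pm1$ spin, the factor $2$ per such cluster being exactly the $q^{k}=2^{k}$ weight in \eqref{e:def:escoupling}. The key point is a connectivity limit: if $x \leftrightarrow \infty$ in $\omega$, then an infinite open path from $x$ must leave every $R_n$ through an edge of $E_{R_n}$, so $x$ is connected to $\Z^2\setminus R_n$ for all large $n$; whereas if $x \nleftrightarrow \infty$, then the (finite) cluster of $x$ and its edges are contained in $R_n$ for all large $n$. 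Consequently, for a fixed finite set $A$ of vertices, $\phi^1_{p,2}\bigl(\exists\, x \in A:\ x \leftrightarrow \Z^2\setminus R_n \text{ but } x \nleftrightarrow \infty\bigr) \to 0$, and on the complementary event — of probability tending to $1$ — the finite-volume decomposition of the clusters meeting $A$ into ``boundary-decided'' versus ``free'' clusters agrees with the infinite-volume decomposition into infinite versus finite clusters. Feeding this into the conditional spin law, and using that the labels on distinct free clusters are i.i.d.\ fair coins, the limit of $\mu^+_{R_n}$ against any cylinder event in finitely many spins and edges equals the expression produced by the prescription (1)--(4) of the first bullet. Hence every limit point $\mu^+$ has both announced conditional structures, which completes the argument.

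\textbf{Main obstacle.} The delicate step is this last passage to the limit: one must quantify the ``large but finite cluster'' contribution, i.e.\ show $\phi^1_{p,2}(x \leftrightarrow \Z^2\setminus R_n,\ x \nleftrightarrow \infty) \to 0$ with enough uniformity to treat several marked vertices at once, and verify that the conditional independence of the fair-coin labels across distinct finite clusters is preserved in the limit. The tools I would use for the requisite connectivity estimates are the domain-Markov property (Lemma~\ref{l:domainmarkov}) together with the FKG inequality and the domain monotonicity of wired measures (Lemma~\ref{l:fkg} and Remark~\ref{r:domainmono}).
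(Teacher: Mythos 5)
This statement is not proved in the paper at all: it is quoted verbatim from Grimmett's book (\cite[Theorem~4.91]{grim}), so there is no in-paper argument to compare yours against; the relevant benchmark is Grimmett's proof, and your sketch is essentially that standard argument. Your outline is correct: the finite-volume Edwards--Sokal couplings $\mu^+_{R_n}$ have, by Lemma~\ref{l:escoupling}, marginals $\pi^+_{R_n}$ and $\phi^1_{R_n,p,2}$ converging to $\pi^+$ and $\phi^1_{p,2}$ (Theorem~\ref{t:weaklimits}); the conditional law of $\omega$ given $\sigma$ is a product over edges of local factors, so it passes to the limit by testing cylinder functions; and the conditional law of $\sigma$ given $\omega$ converges because ``cluster touching $\Z^2\setminus R_n$'' is asymptotically the same as ``infinite cluster''. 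You have also correctly isolated the only genuinely delicate step, namely the non-locality of the cluster decomposition.

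One caution on that step, since you state the key estimate under the limit measure only: $\phi^1_{p,2}(x\leftrightarrow \Z^2\setminus R_n,\ x\nleftrightarrow\infty)\to 0$ is immediate by continuity from above, but what the limiting argument actually requires is a comparison under the \emph{finite-volume} measures, e.g.\ that
$\lim_{m\to\infty}\lim_{n\to\infty}\phi^1_{R_n,p,2}\bigl(x\leftrightarrow \Z^2\setminus R_m,\ x\nleftrightarrow \Z^2\setminus R_n\bigr)=0$.
This does follow from the tools you name: $\{x\leftrightarrow \Z^2\setminus R_m\}$ is a cylinder event, so $\phi^1_{R_n}(x\leftrightarrow \Z^2\setminus R_m)\to\phi^1_{p,2}(x\leftrightarrow \Z^2\setminus R_m)$, while the domain monotonicity of wired measures (Remark~\ref{r:domainmono}, via Lemma~\ref{l:fkg}) sandwiches $\phi^1_{R_n}(x\leftrightarrow \Z^2\setminus R_n)$ between $\phi^1_{p,2}(x\leftrightarrow\infty)$ and $\phi^1_{p,2}(x\leftrightarrow \Z^2\setminus R_m)$ for any fixed $m<n$, so it converges to $\phi^1_{p,2}(x\leftrightarrow\infty)$. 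With that said, together with the (finitely many) bookkeeping cases for several marked vertices lying in common or distinct clusters, your plan closes the argument; it is the same route as the cited reference, just sketched rather than carried out in full.
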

A similar argument is valid for $\phi^0_{p,q}$ and the infinite-volume Ising measure $\pi^0$, with the difference that no fixed value is assigned to $\sigma_x$ in the case $x \leftrightarrow \infty$.

%

\subsection{Tightness of the Ising magnetization field} \label{s:mag}
We now consider the planar Ising magnetization field at critical temperature $\beta_c$, on an open set $U \subset \R^2$ (possibly unbounded or equal to $\R^2$). Call $U_a = U \cap a \Z^2$ for $a > 0, a \in \R$. As in \cite{cgn1} we define an approximation of the Ising magnetization field at scale $a > 0$ as
\begin{equation} \label{e:def:isingmag}
\Phi_a := a^{-\frac 1 8} \sum_{y \in U_a} \sigma_y  \, \mathbbm{1}_{S_a(y)} ,
\end{equation}
where $S_a(y)$ is the (open) square centered at $y$ of side-length $a$, and $\sigma_y$ is the Ising spin at $y$. 

We investigate this quantity at critical temperature, with either $+$ or free boundary condition on $U_a$. Our aim is to establish its tightness in $\B^{\alpha,\mathrm{loc}}_{p,q}(U)$. In order to do that, we will choose a spanning sequence $\msc K$ of $U$ and bound \eqref{e:tight1}, \eqref{e:tight2} for $\Phi_a$, which if $p$ is even become
\begin{equation} \label{e:phitight1p}
a^{-\frac{1}{8}} \sup_{x \in \Lambda_k \cap K}  \left[ \sum_{y_1\ldots y_p \in U_a}  \E^\xi_{U_a}(\sigma_{y_1}\cdots \sigma_{y_p})  \prod_{j=1}^{p}  \int_{S_a(y_j)}   \varphi (2^k (z - x)) \d z   \right]^{\frac{1}{p}},
\end{equation}
\begin{equation}\label{e:phitight2p}
a^{-\frac{1}{8}}  2^{2 n} \sup_{x \in \Lambda_n \cap K}  \left[ \sum_{y_1\ldots y_p \in U_a}  \E^\xi_{U_a}(\sigma_{y_1} \cdots \sigma_{y_p})  \prod_{j=1}^{p} \int_{S_a(y_j)} \psii (2^n (z - x)) \d z  \right]^{\frac{1}{p}} ,
\end{equation}
with $(K,k) \in \msc K$.
Here $\E^\xi_{U_a}(\sigma_{y_1} \cdots \sigma_{y_p})$ is the expectation with respect to the Ising-Potts measure $\pi^\xi_{U_a}$ at critical temperature with either free or $+$ boundary condition (see~\eqref{e:def:isingfree} and \eqref{e:def:isingplus}). 

In the following discussion we will exploit the Ising-FK relation discussed in Subsection~\ref{s:escoupling} and introduce some lemmas which are useful to prove Theorem~\ref{t.Ising}. 

Let $\Lambda \subset \Z^2$ be a finite set. 
Define $A^1_{y_1 \ldots y_n} \subset \{ 0,1 \}^{E_\Lambda} $ the event that each open cluster of the FK model on $\Lambda$ contains an even number of the points $ y_1 , \ldots , y_n$, or is connected to the boundary $\partial \Lambda$. Define also $A^{1,\infty}_{y_1 \ldots y_n} \subset \{ 0,1 \}^{\E^2} $ the event that each  open cluster of the FK model on $\Z^2$ contains an even number of the points $ y_1 , \ldots , y_n$,  or is infinite.
Finally, let $A^0_{y_1 \ldots y_n}$ be the event that each open cluster contains an even number of the points $ y_1 , \ldots , y_n$. It is easy to notice that all these events are increasing, i.e. they are preserved when switching any $\omega_e$ from $0$ to $1$.


\begin{lem} \label{l:fk-ising}
Let $\phi$ be the FK probability measure with $p \in [0,1]$ and $	q=2$, and take~$e^{-\beta} = (1-p)$. Then for any $n \geqslant 1$:
\begin{enumerate}

\item
$
\E^+_{\Lambda}(\sigma_{y_1} \cdots \sigma_{y_n}) = \phi_{\Lambda}^1 (A^1_{y_1 \ldots y_n}).
$

\item
$
\E^+_{\Z^2}(\sigma_{y_1} \cdots \sigma_{y_n}) = \phi_{\Z^2}^1 (A^{1,\infty}_{y_1 \ldots y_n}).
$

\item
$
\E^{\mathrm{free}}_{\Lambda}(\sigma_{y_1} \cdots \sigma_{y_n}) = \phi_{\Lambda} (A^0_{y_1 \ldots y_n}).
$

\item
$
\E^{\mathrm{free}}_{\Z^2}(\sigma_{y_1} \cdots \sigma_{y_n}) = \phi^0_{\Z^2} (A^0_{y_1 \ldots y_n})
$

\end{enumerate}
\end{lem}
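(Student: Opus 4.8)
The plan is to derive all four identities in parallel from the Edwards--Sokal coupling. Fix the marked points $y_1,\ldots,y_n$ (lying in $\Lambda$ for (1) and (3), in $\Z^2$ for (2) and (4)), repetitions being allowed. In each case one couples the relevant Ising-Potts measure with the corresponding FK measure and conditions on the edge configuration $\omega$. The first step is to record the conditional law of the spins given $\omega$. In the finite-volume cases this is read directly off formula \eqref{e:def:escoupling} (respectively off the measure in Lemma~\ref{l:escoupling:noboundary}): conditionally on $\omega$, the spin field is constant on each open cluster, and uniform over all configurations that are constant on clusters and also respect the boundary condition. In the infinite-volume cases the same conditional description is supplied by the explicit construction of Theorem~\ref{t:escoupling:infinite} (respectively by its $\pi^0$-variant noted just afterwards). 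In every case the upshot is identical: conditionally on $\omega$, each open cluster $C$ carries a spin $\tau_C$, the $\tau_C$ being independent across clusters; a \emph{pinned} cluster has $\tau_C\equiv +1$, and every other cluster has $\tau_C$ uniform on $\{-1,+1\}$. A cluster is pinned when it is connected to $\partial\Lambda$ in case (1), when it is infinite in case (2), and no cluster is pinned in cases (3) and (4).

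The second step is a short computation. Let $m_C(\omega) := \#\{j : y_j \in C\}$ be the number of marked points lying in the cluster $C$, counted with multiplicity; since there are finitely many marked points, $m_C(\omega)$ is always finite. Independence across clusters gives
\begin{equation*}
\E\Ll[\sigma_{y_1}\cdots\sigma_{y_n} \mid \omega\Rr] \;=\; \prod_{C}\E\Ll[\tau_C^{\,m_C(\omega)}\Rr],
\end{equation*}
the product being over the open clusters of $\omega$. A pinned cluster contributes the factor $1$; an unpinned cluster contributes $1$ if $m_C(\omega)$ is even (in particular if $m_C(\omega)=0$) and $0$ if $m_C(\omega)$ is odd. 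Hence the conditional expectation equals $1$ exactly when every open cluster either contains an even number of the marked points or is pinned, and $0$ otherwise; that is, it equals $\mathbbm{1}_{A^1_{y_1 \ldots y_n}}(\omega)$ in case (1), $\mathbbm{1}_{A^{1,\infty}_{y_1 \ldots y_n}}(\omega)$ in case (2), and $\mathbbm{1}_{A^0_{y_1 \ldots y_n}}(\omega)$ in cases (3) and (4). Taking expectations over $\omega$, and using Lemmas~\ref{l:escoupling} and~\ref{l:escoupling:noboundary} and Theorem~\ref{t:escoupling:infinite} to identify the FK marginal of the coupling as $\phi^1_\Lambda$, $\phi^1_{\Z^2}$, $\phi_\Lambda$ (on $G=(\Lambda,E_\Lambda)$) and $\phi^0_{\Z^2}$ respectively, yields the four stated identities.

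The argument is essentially bookkeeping, so I do not anticipate a genuine obstacle; the points needing care are (a) correctly identifying, in each boundary regime, which clusters are forced to carry spin $+1$, so that the event one recovers is exactly the corresponding event of the statement --- in particular, under $\pi^0$ an infinite cluster is \emph{not} pinned and so must itself contain an even number of the marked points for $\omega$ to belong to $A^0_{y_1 \ldots y_n}$ --- and (b) in the infinite-volume cases (2) and (4), grounding the conditional spin description on the construction of Theorem~\ref{t:escoupling:infinite} (and its $\pi^0$-analogue) rather than on a finite-volume density. It is also worth keeping in mind that $m_C$ is counted with multiplicity, which is precisely what makes the computation valid when some of the $y_j$ coincide, as happens in the sums \eqref{e:phitight1p}--\eqref{e:phitight2p}.
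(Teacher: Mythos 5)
Your proof is correct and follows essentially the same route as the paper's: both rest on the Edwards--Sokal coupling and the observation that, conditionally on the FK configuration, the spin product averages to the indicator that every non-pinned open cluster contains an even number of the marked points, after which one integrates against the FK marginal. The only difference is presentational --- the paper carries out the explicit sum over $\sigma$ and the partition-function identification $Z^+_{ES}=\tfrac12 Z^{1,\Lambda}_{FK}$ for case (1) and defers the other cases to Theorem~\ref{t:escoupling:infinite}, whereas you phrase the same cancellation via the conditional cluster-spin law and treat all four cases (including the correct non-pinning of infinite clusters under $\pi^0$) in parallel.
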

\begin{proof}
We only prove the first point in this lemma, as the other equalities can be obtained with the same arguments, using Theorem~\ref{t:escoupling:infinite}.
Let $f(\sigma)=\sigma_{y_1} \cdots \sigma_{y_n}$, from Lemma~\ref{l:escoupling} and \eqref{e:def:escoupling} we can write
\begin{align*}
\E^+_{\Lambda} [f (\sigma) ] & = \sum_{\sigma \in \Sigma^1_\Lambda} f(\sigma) \sum_{\omega \in \Omega} \mu^1_\Lambda (\sigma, \omega) \\
& = \frac{1}{Z^1_{ES}} \sum_{\sigma \in \Sigma^1_\Lambda} f(\sigma) \sum_{\omega \in \Omega^1_\Lambda} \prod_{e \in E_\Lambda} \left[ (1-p) \mathbbm{1}_{\omega_e = 0} + p \mathbbm{1}_{\omega_e = 1} \mathbbm{1}_{\sigma_e = 1}   \right]  \\
& =  \frac{1}{Z^1_{ES}} \sum_{\omega \in \Omega^1_\Lambda}   (1-p)^{| E_\Lambda \setminus \eta_\Lambda (\omega)|} p^{|\eta_\Lambda (\omega)|}  \sum_{\sigma \in \Sigma^1_\Lambda} f(\sigma) \prod_{e \in \eta_\Lambda (\omega)} \mathbbm{1}_{\sigma_e = 1} \\
\end{align*}
with $\eta_\Lambda (\omega) = \{ e \in E_\lambda \mid \omega_e =1 \}$.

Now take $\omega \in \Omega^1_\Lambda$ such that one or more of its clusters contain an odd number of points in $y_1 \ldots y_n$. The sum $\sum_{\sigma \in \Sigma^1_\Lambda} f(\sigma) \prod_{e \in \eta_\Lambda (\omega)} \mathbbm{1}_{\sigma_e = 1}$ is zero (indeed, each odd cluster takes the values $+1$ and $-1$ and all terms cancel out). Conversely, if $\omega \in A^1_{y_1 \ldots y_n}$, the product $\sigma_{y_1} \cdots \sigma_{y_{2k}}$ in the same cluster is equal to $1$.  We can write then:

\begin{align*}
\E^+_{\Lambda} [f (\sigma) ] & =  \frac{1}{Z^1_{ES}} \sum_{\omega \in \Omega^1_\Lambda} \mathbbm{1}_{A^1_{y_1 \ldots y_n}}(\omega)   (1-p)^{| E_\Lambda \setminus \eta_\Lambda (\omega)|} p^{|\eta_\Lambda (\omega)|}  \sum_{\sigma \in \Sigma^1_\Lambda}  \prod_{e \in \eta_\Lambda (\omega)} \mathbbm{1}_{\sigma_e = 1}  \\
& = \frac{1}{Z^+_{ES}} \sum_{\omega \in \Omega^1_\Lambda} \mathbbm{1}_{A^1_{y_1 \ldots y_n}}(\omega)   (1-p)^{| E_\Lambda \setminus \eta_\Lambda (\omega)|} p^{|\eta_\Lambda (\omega)|} 2^{\overline{k(\omega,\E^2 \setminus E_\Lambda)}}
\end{align*}
Here $\overline{k(\omega,\E^2 \setminus E_\Lambda)}$ is the number of connected clusters of $\omega$ that do not intersect $\E^2 \setminus E_\Lambda$. 

The following equivalence between partition functions yields the result:
\begin{align*}
Z^1_{ES} & = \sum_{\omega \in \Omega^1_\Lambda}   (1-p)^{| E_\Lambda \setminus \eta_\Lambda (\omega)|} p^{|\eta (\omega)|}  \sum_{\sigma \in \Sigma^1_\Lambda} \prod_{e \in \eta_\Lambda (\omega)} \mathbbm{1}_{\sigma_e = 1} \\
& = \frac{1}{2} \sum_{\omega \in \Omega^1_\Lambda}   (1-p)^{| E_\Lambda \setminus \eta_\Lambda (\omega)|} p^{|\eta_\Lambda  (\omega)|} 2^{k(\omega, E_\Lambda)} = \frac{1}{2}  Z^{1,\Lambda}_{FK}(p,2). \qedhere
\end{align*}  
\end{proof}

We are going to need a well-known inequality for the 2-d Ising model of Onsager, formulated using connection probabilities for the FK model. See also \cite[Lemma~5.4]{d-chn}.
\begin{lem} \label{l:onsager}
Let $m \in \N$ and $B_{m} = [-m,m]^2 \cap \Z^2$ . At critical temperature $p_c = 1- e^{-\beta_c}$, there exists $C >0 $ such that:
$$
\phi^1_{B_m,p_c,q=2} (0 \leftrightarrow \partial B_m) \leqslant C m^{-\frac{1}{8}}.
$$
\end{lem}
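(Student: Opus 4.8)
The plan is to convert the FK connection probability into an Ising one-point function via the Edwards--Sokal dictionary, and then to invoke the exact solution of the planar Ising model.

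First I would apply Lemma~\ref{l:fk-ising}(1) with $n=1$. A single point can never be split into an even number of points, so the event $A^1_{\{0\}}$ is precisely $\{0\leftrightarrow\partial B_m\}$: the open cluster containing $0$ is forced to reach the boundary. Since the ``$+$'' boundary condition on the Ising side corresponds to the wired (``$1$'') boundary condition on the FK side, so that $\phi^+_{B_m,p_c,2}=\phi^1_{B_m,p_c,2}$, Lemma~\ref{l:fk-ising}(1) gives
\[
\phi^+_{B_m,p_c,2}(0\leftrightarrow\partial B_m)=\E^+_{B_m}(\sigma_0),
\]
where $\E^+_{B_m}$ denotes expectation under the critical Ising measure on $B_m$ with $+$ boundary condition. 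It therefore suffices to establish the finite-volume magnetization bound $\E^+_{B_m}(\sigma_0)\le C\,m^{-1/8}$, which is the classical form of Onsager's theorem that the spontaneous magnetization vanishes at $\beta_c$, with the precise value $\tfrac18$ of the relevant critical exponent; I would cite this (see \cite{d-chn} and the references therein).

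If one wishes to stay closer to the tools already set up, one can instead derive the upper bound from the known asymptotics of the two-point function in the (unique) infinite-volume critical state, $\E_{\Z^2}(\sigma_0\sigma_x)\asymp|x|^{-1/4}$. Choosing symmetric bulk points $x,-x$ with $|x|$ of order $m$ and at distance of order $m$ from $\partial B_m$, the reflection $u\mapsto-u$ gives $\E^+_{B_m}(\sigma_x)=\E^+_{B_m}(\sigma_{-x})$, while Griffiths' second inequality --- which here follows from the FKG inequality of Lemma~\ref{l:fkg} applied to the connection events, since $\E^+_{B_m}(\sigma_x\sigma_{-x})=\phi^1_{B_m}(A^1_{x,-x})\ge\phi^1_{B_m}(x\leftrightarrow\partial B_m)\,\phi^1_{B_m}(-x\leftrightarrow\partial B_m)$ --- yields $\E^+_{B_m}(\sigma_x)^2\le\E^+_{B_m}(\sigma_x\sigma_{-x})$. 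Bounding the right-hand side by the infinite-volume two-point function up to lower-order boundary corrections (via weak convergence $\phi^1_{B_m}\to\phi^1_{\Z^2}$ at criticality together with the domain monotonicity of Remark~\ref{r:domainmono}) one obtains $\E^+_{B_m}(\sigma_x)\le C'm^{-1/8}$, and then $\E^+_{B_m}(\sigma_0)\le\E^+_{B_m}(\sigma_x)$ by a standard monotonicity argument (the centre is the point of $B_m$ least favoured by the $+$ boundary).

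The substantive content of the lemma is the sharp exponent $\tfrac18$, and this is the step I do not expect to obtain by soft arguments: RSW-type estimates (as in \cite{d-chn}) give only a two-sided polynomial bound $m^{-c_2}\lesssim\phi^+_{B_m}(0\leftrightarrow\partial B_m)\lesssim m^{-c_1}$ with unidentified constants $0<c_1\le c_2$, so some input from the exact solvability of the planar Ising model (Onsager's magnetization formula, or equivalently the precise decay of the critical two-point function) is unavoidable. Everything else --- the FK/Ising identification, positive association, and domain monotonicity --- is elementary and already available from the preceding lemmas.
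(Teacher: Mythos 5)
The paper does not prove this lemma at all: it is quoted as a known input, with the reference \cite[Lemma~5.4]{d-chn}, and the only ``proof'' in the paper is that citation. Your main route --- applying Lemma~\ref{l:fk-ising} with $n=1$ to identify $\phi^1_{B_m,p_c,2}(0\leftrightarrow\partial B_m)$ with the finite-volume magnetization $\E^+_{B_m}(\sigma_0)$, and then invoking the classical Onsager-type estimate $\E^+_{B_m}(\sigma_0)\le C m^{-1/8}$ --- is correct and is essentially the content (and the proof strategy) of the cited result, so at the level at which the paper treats this lemma your proposal matches it.

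One caveat about your ``more self-contained'' second route: the step where you bound $\E^+_{B_m}(\sigma_x\sigma_{-x})$ from above by the infinite-volume two-point function ``via weak convergence together with the domain monotonicity of Remark~\ref{r:domainmono}'' does not work as stated, because the monotonicity goes the wrong way. Remark~\ref{r:domainmono} says that for increasing events the wired measure on a \emph{smaller} domain dominates, hence $\E^+_{B_m}(\sigma_x\sigma_{-x})\ge\E^+_{\Z^2}(\sigma_x\sigma_{-x})$; the finite-volume $+$/wired correlation is \emph{larger} than the infinite-volume one, so the infinite-volume asymptotics $|x|^{-1/4}$ cannot be imported as an upper bound by these soft tools, and weak convergence gives no quantitative control when $|x|$ is comparable to $m$. (The FKG step $\E^+_{B_m}(\sigma_x)\E^+_{B_m}(\sigma_{-x})\le\E^+_{B_m}(\sigma_x\sigma_{-x})$ and the comparison of one-arm probabilities at different scales are fine.) So the genuinely quantitative input has to be a finite-volume (or $+$-state) bound coming from the exact solution, exactly as you acknowledge in your last paragraph; since your primary argument only uses that classical input, the proposal as a whole stands, but the sketched alternative should not be presented as a consequence of Remark~\ref{r:domainmono}.
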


The following proposition is known (see \cite[Proposition~3.9]{cgn1} for a sketch of the proof), but we give here a different (and complete) proof which employs \emph{the pin and sum argument with hairy cycles} of \cite{Abdesselam}. 

\begin{prop} \label{p:garbound}
Let $p \in \N$. There exists $C > 0$ such that, for every $N \in \N$:
\begin{equation}
\label{e:garbound}
\sum_{y_1, \ldots , y_p \in U_N} \E^\xi_{U_N \left( \Z^2  \right)} (\sigma_{y_1} \cdots \sigma_{y_p} ) \leqslant C (N+1)^{\frac{15}{8}p}
\end{equation}
with $U_N = [0,N]^2 \cap \Z^2 $ and $\E^\xi_{U_N \left( \Z^2  \right)}$ being the expectation on either $U_N$ or $\Z^2$ at critical temperature $\beta_c$.
\end{prop}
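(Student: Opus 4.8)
The plan is to reduce the $p$-point correlation sum to a product of one-point quantities via the FKG inequality applied to the FK representation of Lemma~\ref{l:fk-ising}, and then to estimate the one-point sum using the Onsager bound of Lemma~\ref{l:onsager}. The starting observation is that, by Lemma~\ref{l:fk-ising}, the correlation $\E^\xi_{U_N}(\sigma_{y_1}\cdots\sigma_{y_p})$ equals the probability $\phi^\xi_{U_N}(A^\xi_{y_1\ldots y_p})$ of an \emph{increasing} event (whether we use the $+$/wired, free, or infinite-volume versions). The key structural fact I would exploit is that the event ``each open cluster contains an even number of the marked points, or touches the boundary/infinity'' is \emph{contained} in the event that, for every pair $\{y_j, y_\ell\}$ among a chosen pairing, the two points are connected to each other or both to the boundary. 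More precisely, if $p = 2\kappa$ is even (the odd case giving a smaller sum that can be absorbed, or handled by adding a dummy point), then on $A^\xi_{y_1\ldots y_p}$ one can pair up the marked points so that paired points lie in a common cluster or are both connected to $\partial U_N$; summing over pairings and using a union bound, $\phi^\xi(A^\xi_{y_1\ldots y_p}) \le \sum_{\text{pairings } \pi} \prod_{\{j,\ell\}\in\pi} \phi^\xi\big(y_j \leftrightarrow y_\ell \text{ or both } \leftrightarrow \partial\big)$.

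Next I would apply the FKG inequality in reverse—i.e.\ the fact that for \emph{decreasing} events one gets the opposite inequality, or, more directly, bound each two-point connection probability by a one-point quantity. The cleaner route: by monotonicity in the domain (Remark~\ref{r:domainmono}) and translation, $\phi^1_{U_N}(y_j \leftrightarrow y_\ell) \le \phi^1_{B_m}(0 \leftrightarrow \partial B_m)$ where $m$ is of order $|y_j - y_\ell|$, since a connection between $y_j$ and $y_\ell$ forces a crossing of the annulus of inradius $\sim |y_j-y_\ell|/2$ around $y_j$; similarly $\phi^1_{U_N}(y_j \leftrightarrow \partial U_N) \le C\,\mathsf{dist}(y_j,\partial U_N)^{-1/8}$ by the same argument with $B_m$ recentered at $y_j$. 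Using Lemma~\ref{l:onsager}, each factor is thus at most $C(1+|y_j-y_\ell|)^{-1/8}$, or $C(1+\mathsf{dist}(y_j,\partial U_N))^{-1/8}$. It remains the elementary counting step: for a fixed pairing,
\begin{equation*}
\sum_{y_1,\ldots,y_p \in U_N} \prod_{\{j,\ell\}\in\pi} C\,(1+|y_j-y_\ell|)^{-1/8} \le C \prod_{\{j,\ell\}\in\pi} \sup_{y_\ell}\sum_{y_j \in U_N}(1+|y_j-y_\ell|)^{-1/8},
\end{equation*}
and since $U_N$ has $\sim N^2$ points and $\sum_{y \in [0,N]^2\cap\Z^2}(1+|y|)^{-1/8} \le C N^{2 - 1/8} = C N^{15/8}$, each of the $\kappa = p/2$ paired sums contributes $N^{15/8}$ per \emph{two} points, giving $N^{(15/8)p}$ overall. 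The number of pairings is a constant depending only on $p$, so it is absorbed into $C$.

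The main obstacle I anticipate is making the ``pairing domination'' step rigorous: one must verify that on $A^\xi_{y_1\ldots y_p}$ there genuinely exists a pairing of the marked points into connected-or-boundary pairs, with the subtlety that a cluster may contain $4, 6, \ldots$ marked points (then pair them arbitrarily within the cluster) and that clusters touching $\partial U_N$ absorb an arbitrary number of leftover points (pair those among themselves, using that they are all connected to $\partial U_N$, hence pairwise ``connected via the boundary''). A second, minor, subtlety is the free and infinite-volume cases: for free boundary conditions the event $A^0$ has no boundary escape, so every cluster carrying marked points must carry an even number, and the pairing is purely within clusters; the connection estimate $\phi_{U_N}(y_j\leftrightarrow y_\ell) \le C(1+|y_j-y_\ell|)^{-1/8}$ still holds by comparison of the free measure with the wired one on a smaller box (Lemma~\ref{l:fkg} monotonicity) together with Lemma~\ref{l:onsager}. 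For $\Z^2$ with $\phi^1_{\Z^2}$ one uses $\phi^1_{\Z^2} \le \phi^1_{B_m}$ (thermodynamic limit monotonicity, Theorem~\ref{t:weaklimits}) to reduce again to Onsager's bound. Once the pairing lemma is in place, the rest is the routine counting above.
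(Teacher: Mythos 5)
The step on which your argument hinges fails: the factorization $\phi^\xi\bigl(\bigcap_{\{j,\ell\}\in\pi}\{y_j \leftrightarrow y_\ell \text{ or both }\leftrightarrow\partial\}\bigr)\le\prod_{\{j,\ell\}\in\pi}\phi^\xi\bigl(y_j \leftrightarrow y_\ell \text{ or both }\leftrightarrow\partial\bigr)$ is not justified. These pair events are increasing, and the FKG inequality (Lemma~\ref{l:fkg}) gives precisely the opposite inequality; there is no ``reverse FKG'' to invoke, since decreasing events are \emph{also} positively associated under a positively associated measure, and a BK-type disjoint-occurrence inequality is not available for the FK measure with $q=2$ (nor does the event $A^1_{y_1\ldots y_p}$ guarantee edge-disjoint connections when several paired points lie in one cluster). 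The paper avoids this obstacle by localizing \emph{per point}: it sets $\ell_i=\min_{j\ge 0,\,j\neq i}\mathrm{d}(y_i,y_j)$ (the boundary playing the role of $y_0$) and $B_i=y_i+[-\ell_i/4,\ell_i/4]^2$, observes that $A^1_{y_1\ldots y_p}$ forces $y_i\leftrightarrow\partial B_i$ for every $i$, and since the $B_i$ are pairwise disjoint, the domain Markov property and monotonicity (Lemma~\ref{l:domainmarkov}, Remark~\ref{r:indep}, Lemma~\ref{l:fkg}) yield $\phi^+_{U_N}\bigl(\bigcap_i\{y_i\leftrightarrow\partial B_i\}\bigr)\le\prod_i\phi^+_{B_i}(y_i\leftrightarrow\partial B_i)$, after which Lemma~\ref{l:onsager} gives one factor $\ell_i^{-1/8}$ \emph{per point}.

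Even granting your factorization, the estimate is quantitatively insufficient and the counting step is incorrect. For a fixed pairing the sum factorizes exactly as $\prod_{\{j,\ell\}\in\pi}\sum_{y_j,y_\ell\in U_N}(1+|y_j-y_\ell|)^{-1/8}$; replacing the sum over $y_\ell$ by a supremum drops a factor of order $N^2$ per pair and is not a valid inequality. The correct value is of order $\bigl(N^2\cdot N^{15/8}\bigr)^{p/2}=N^{31p/16}$, which exceeds the target $N^{15p/8}=N^{30p/16}$: already for $p=2$ a single one-arm factor $\mathrm{d}(y_1,y_2)^{-1/8}$ gives $N^{31/8}$ instead of the required $N^{15/4}$. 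One needs decay $\mathrm{d}^{-1/4}$ per pair, i.e.\ essentially one factor $\ell_i^{-1/8}$ per point, which is exactly what the paper's per-point localization produces; the genuinely nontrivial work then lies in summing $\prod_i\bigl[\min_{j\neq i}\mathrm{d}(y_i,y_j)\bigr]^{-1/8}$, which the paper handles with a directed-graph (tree-and-cycle) decomposition, plus a separate argument for coinciding points. Your proposal also leaves the odd-$p$ case and the coinciding-point case essentially unaddressed, whereas the paper's event-based argument covers all $p$ and treats repetitions explicitly.
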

\begin{proof}
The events $A_{y_1 \ldots y_p}$ are increasing, and we have $A^0_{y_1 \ldots y_p} \subset A^1_{y_1 \ldots y_p}$ when the events are on the same domain (finite or infinite). From the coupling of Lemma~\ref{l:fk-ising}, and using the monotonicity properties of Lemma~\ref{l:fkg} it is easy to obtain $\E^\xi_{U_N \left( \Z^2  \right)} \leqslant  \E^+_{U_N}$. We are then left to show the inequality for this term.

We start by showing that 
\begin{equation} \label{e:corr-alldiff}
\sum_{\substack {y_1, \ldots , y_p \in U_N \\ y_i \neq y_j \forall i \neq j }} \E^+_{U_N} (\sigma_{y_1} \cdots \sigma_{y_p} ) \leqslant C N^{\frac{15}{8}p}.
\end{equation}
The event $A^1_{y_1 \ldots y_p}$ of Lemma~\ref{l:fk-ising} implies that every point in $\{ y_1 , \ldots , y_p \}$ is connected by an open path to another point in $\{ y_1 , \ldots , y_p \}$ or to the boundary $\partial U_N$, which we call $y_0$. For every $1 \leqslant i \leqslant p$, call $\ell_i = \min_{j \geqslant 0, j \neq i} \mathrm{d}(y_i,y_j)$ where $\mathrm{d}(y_i,y_j)$ is the $\Z^2$ distance between $y_i$ and $y_j$, and define $B_i = y_i + \llbracket -\ell_i /4 , \ell_i /4 \rrbracket^2 $, $F = \bigcup_{i = 1}^p  B_i$. Notice that the graph $F \subset \Z^2$ has $p$ disjoint components. 

From Lemma~\ref{l:domainmarkov}, Remark~\ref{r:indep} and since $\phi^+_{U_N} (A) = \sum_{\omega} \phi^+_{U_N} (A \mid \mathcal{T}_F) (\omega ) \phi^+_{U_N} (\omega)$, we obtain
$$
\E^+_{U_N} (\sigma_{y_1} \cdots \sigma_{y_p} ) \leqslant  \phi^+_{U_N} \left( \bigcap_{i = 1}^p \{ y_i \leftrightarrow \partial B_i  \} \right) \leqslant \prod_{i = 1}^p \phi^+_{B_i} (y_i
   \leftrightarrow \partial B_i) ,
$$
where we used the monotonicity property of Lemma~\ref{l:fkg} in the second inequality. Lemma~\ref{l:onsager} yields:
\begin{align*}
\sum_{\substack {y_1, \ldots , y_p \in U_N \\ y_i \neq y_j \forall i \neq j }} \E^+_{U_N} (\sigma_{y_1} \cdots \sigma_{y_p} ) & \lesssim  \sum_{\substack {y_1, \ldots , y_p \in U_N \\ y_i \neq y_j \forall i \neq j }} \prod_{i = 1}^p \left[ \min_{j \geqslant 0, j \neq i} \mathrm{d}(y_i,y_j)  \right]^{- \frac{1}{8}} \\
& \lesssim \sum_{\substack {y_1, \ldots , y_p \in U_N \\ y_i \neq y_j \forall i \neq j }} \prod_{i = 1}^p \sum_{\substack{j = 0 \\ j \neq i}}^p \mathrm{d}(y_i,y_j)^{- \frac{1}{8}} \\
& \lesssim \sum_{\substack {j_1 \ldots j_p = 0 \\  j_i \neq i}}^p \sum_{\substack {y_1, \ldots , y_p \in U_N \\ y_i \neq y_j \forall i \neq j }} \mathrm{d}(y_1, y_{j_1})^{- \frac{1}{8}} \cdot \ldots \cdot \mathrm{d}(y_p, y_{j_p})^{- \frac{1}{8}}  
\end{align*}

It is easy to see that for $i \in \{ 1 \ldots p \}$, $j \in \{ 0 \ldots p \}$
\begin{equation}
\sum_{\substack{y_i \in U_N \\ y_i \neq y_j }} \mathrm{d}(y_i,y_j)^{-\frac{1}{8}} \lesssim N^{\frac{15}{8}} , \label{e:onedist}
\end{equation} 
there are indeed $\sim k$ points at distance $k$ from $y_{j}$. 

To estimate the term
\begin{equation}
\sum_{\substack {y_1, \ldots , y_p \in U_N \\ y_i \neq y_j \forall i \neq j }} \mathrm{d}(y_1, y_{j_1})^{- \frac{1}{8}} \cdots \mathrm{d}(y_p, y_{j_p})^{- \frac{1}{8}} 
 \qquad (0 \leqslant j_i \leqslant p, j_i \neq i)  \label{e:manydist}
\end{equation}  
we need to find the right order in which to compute the sums $\sum_{y_i}$. 
We associate then \eqref{e:manydist} to a graph with $p+1$ vertices $\{0, 1, \ldots , p \}$ and $p$ directed edges, 
 such that to $\mathrm{d}(y_i, y_{j_i})$ corresponds an edge going from $i$ to $j_i$. 
 
Notice that every vertex in $\{ 1, \ldots, p \}$
has exactly one edge going to a vertex in $\{ 0,1, \ldots , p \}$ and the vertex $0$ has no outgoing edges. Therefore, following the directed edges starting from any vertex in $\{ 1 ,\ldots , p\}$ one either ends up at the vertex $0$, or enters a cycle (because every vertex except $0$ has an outgoing edge). This cycle cannot be escaped, again because vertices in $\{ 1 ,\ldots , p\}$  have only one outgoing edge (indeed, to every $y_i$ there is only one $y_{j_i}$ associated to it). 

This said, we can conclude that our graph has one or more connected components, each of which can be of two distinct types:
\begin{itemize}
\item a tree with root in the vertex $0$
\item a cycle, possibly with branches attached to it (i.e. each point of the cycle can be the root of a tree).
 \end{itemize}

We can then proceed to estimate every sum in~\eqref{e:manydist} in the order given by the oriented
graph, starting from the leaves. This is just a repeated application of~\eqref{e:onedist}, until we reach the root ($0$) or a circle. Hence every connected component with root in $0$ and $k$ edges gives a term of order $N^{\frac{15}{8} k}$.
For example we can estimate the following term as follows (starting from the leaves $y_1$ and $y_3$):
\begin{multline*} 
\sum_{\substack{y_1, y_2, y_3, \in U_N \\ y_i \neq y_j \forall i \neq j }} \mathrm{d}(y_1,y_2)^{- \frac{1}{8}} \mathrm{d}(y_2,
   y_0)^{- \frac{1}{8}} \mathrm{d}(y_3,y_2)^{- \frac{1}{8}} \\
   \leqslant \sum_{y_2 \in U_N} \mathrm{d}(y_2,y_0)^{- \frac{1}{8}} \sum_{\substack{y_1 \in U_N \\ y_1 \neq y_2 }} \mathrm{d}(y_1,y_2)^{- \frac{1}{8}}
    \sum_{\substack{y_3 \in U_N \\ y_3 \neq y_2}} \mathrm{d}(y_2,y_3)^{- \frac{1}{8}} \lesssim N^{\frac{45}{8}}   
\end{multline*}

Summing on circles does not pose any additional problem: indeed one can just
choose a point within the circle (call it $\hat{y}_2$) and sum keeping fixed
both the ``inbound'' point $\hat{y}_1$ and the ``outbound'' point $\hat{y}_3$:
\begin{align*}
\sum_{\substack{\hat{y}_2 \in U_N \\ \hat{y}_2 \neq \hat{y}_1, \hat{y}_2 \neq \hat{y}_3}} \mathrm{d}(\hat{y}_1, \hat{y}_2)^{- \frac{1}{8}} \mathrm{d}
   (\hat{y}_2, \hat{y}_3)^{- \frac{1}{8}} 
&  \leqslant \sum_{\substack{\hat{y}_2 \in
   U_N \\ \hat{y}_2 \neq \hat{y}_1  }} \frac{ \mathrm{d}(\hat{x}_1, \hat{x}_2)^{- \frac{1}{4}}}{2} +
   \sum_{\substack{\hat{y}_2 \in
   U_N \\ \hat{y}_2 \neq \hat{y}_3  }} \frac{ \mathrm{d}(\hat{y}_2, \hat{y}_3)^{-
   \frac{1}{4}}}{2} \\
& \lesssim N^{2 - \frac{1}{4}} 
\end{align*}
where we used Young inequality. Then (for a circle with $k$ edges) the sum
over the remaining vertices $\hat{y}_3 \ldots \hat{y}_k$ gives an estimation of order
$N^{\frac{15}{8} (k - 2)}$. This proves~\eqref{e:corr-alldiff}.

Now consider the general case in which two or more points concide. At the price of a factor $p!$ we can reorder the points, and take the last $p - k$ points to be all different from each other (with $2 \leqslant k \leqslant p$). Conversely, $\{ y_1, \ldots , y_k \}$ can be partitioned in $m$ subsets such that all the points in the same subset are equal: we call $k_i$ the number of points in the $i$-th subset with $k = k_1 + \ldots +k_m$, and therefore $m \leqslant k/2$. We want to show that:
$$
\sum_{\substack{\bar{y}_1, \ldots , \bar{y}_m \in U_N,   \\ \bar{y}_i \neq y_j, i \leqslant m , j \in [k+1, p]}} \sum_{\substack {y_{k+1}, \ldots , y_p \in U_N \\ y_i \neq y_j  }} \E^+_{U_N} (\sigma_{\bar{y}_1}^{k_1} \cdots \sigma_{\bar{y}_m}^{k_m} \sigma_{y_{k+1}} \cdots \sigma_{y_p} ) \leqslant C N^{\frac{15}{8}p}.
$$ 
As before we define $\ell_i = \min_{j \geqslant 0, j \neq i} \mathrm{d}(y_i,y_j)$  for every $k+1 \leqslant i \leqslant p$ and $B_i = y_i + \llbracket -\ell_i /4 , \ell_i /4 \rrbracket^2 $. Notice that the event $A^1_{y_1 \ldots y_p}$ implies that every $y_i$ with $i \geqslant k+1$ is connected by an open path to the boundary of $B_i$. Then using the results already obtained:
\begin{multline*}
\sum_{\substack{\bar{y}_1, \ldots , \bar{y}_m \in U_N  \\\bar{y}_i \neq y_j , i \leqslant m , k+1 \leqslant j \leqslant p}} \sum_{\substack {y_{k+1}, \ldots , y_p \in U_N \\ y_i \neq y_j  }} \E^+_{U_N} (\sigma_{\bar{y}_1}^{k_1} \cdots \sigma_{\bar{y}_m}^{k_m} \sigma_{y_{k+1}} \cdots \sigma_{y_p} ) \\
\lesssim N^{2 m} \phi^+_{U_N} \left( \bigcap_{i = k+1}^p \{ y_i \leftrightarrow \partial B_i  \} \right) 
 \lesssim N^{2 m} N^{\frac{15}{8}(p-k)} \leqslant N^{\frac{15}{8}p}.
\end{multline*}
\end{proof}

We are now ready to prove Theorem~\ref{t.Ising}.

\begin{proof}[Proof of Theorem~\ref{t.Ising}]
By Theorem~\ref{t:tight}, the result is proved as soon as we can bound \eqref{e:phitight1p} and \eqref{e:phitight2p} for any even $p \geqslant 2$. If the domain $U$ is bounded, we choose $\msc K = (K_n,n)_{n \in \N}$ as its spanning sequence, with:
\begin{equation}\label{e:newspan} 
K_n = \{ x \in \R^2  \mid \mathrm{dist}(x,U^c) \geq (2+\delta) R 2^{-n}  \}
\end{equation}
for $\delta >0$ and $R$ such that \eqref{e:def:R} holds. If $U$ is unbounded, it suffices to take $\hat{K}_n = K_n \cap \bar{B}(0,n)$: in both cases we have a valid spanning sequence according to Definition~\ref{def:span}.

We first consider \eqref{e:phitight2p}.
From the support properties of  $\psii (2^n (\cdot- x))$ \eqref{e:def:R} we can restrict the sum over $y_j$ to the set 
$$\Omega_{n,x} = \{ y \in  U_a \mid \mathrm{d}(y,x) < 2^{-n}R + a/\sqrt{2} \}.$$
Now we bound \eqref{e:phitight2p} separately for small and large values of $n$.

If~$2^n \geqslant R  a^{-1}$ we have
\begin{multline*}
\sum_{y_1\ldots y_p \in U_a}  \E^\xi_{U_a}(\sigma_{y_1} \cdots \sigma_{y_p})  \prod_{j=1}^{p} \int_{S_a(y_j)} \psii (2^n (z - x)) \d z  \\
\leqslant \sum_{y_1\ldots y_p \in \Omega_{n,x}} \prod_{j=1}^{p} \int_{S_a(y_j)} \left| \psii (2^n (z - x)) \right| \d z   
\leqslant  \sum_{y_1\ldots y_p \in \Omega_{n,x}}2^{-2 p n}  \lesssim 2^{-2 p n}.
\end{multline*}
This gives the estimation
$$
a^{-\frac{1}{8}}  2^{2 n} \sup_{x \in \Lambda_n \cap K}  \left[ \sum_{y_1\ldots y_p \in U_a}  \E^\xi_{U_a}(\sigma_{y_1} \cdots \sigma_{y_p})  \prod_{j=1}^{p} \int_{S_a(y_j)} \psii (2^n (z - x)) \d z  \right]^{\frac{1}{p}} \lesssim 2^{\frac 1 8 n}.
$$

Conversely, if~$2^n < R  a^{-1}$ we first notice that 
$$
\Omega_{n,x} \subset  \tilde{U}_{a,x} = [x- 2R 2^{-n}, x+ 2R 2^{-n}]^2 \cap a \Z^2
$$
and then using Lemma~\ref{l:fkg}:
\begin{multline*}
 \sum_{y_1\ldots y_p \in \tilde{U}_{a,x}} \E^{\xi}_{U_a}(\sigma_{y_1} \cdots \sigma_{y_p})
  \lesssim \sum_{y_1\ldots y_p \in \tilde{U}_{a,x}} \E^+_{U_a}(\sigma_{y_1} \cdots \sigma_{y_p}) \\
  \lesssim  \sum_{y_1\ldots y_p \in \tilde{U}_{a,x}} \E^+_{\tilde{U}_{a,x}}(\sigma_{y_1} \cdots \sigma_{y_p}) 
  \lesssim \sum_{y_1\ldots y_p \in \llbracket -N , N \rrbracket ^2} \E^+_{\llbracket -N , N \rrbracket ^2}(\sigma_{y_1} \cdots \sigma_{y_p})
\end{multline*}
with $N = \lfloor  \frac{2R 2^{-n}}{a} \rfloor$. By Proposition~\ref{p:garbound}, we finally obtain
$$
\sum_{y_1\ldots y_p \in \tilde{U}_{a,x}} \E^{\xi}_{U_a}(\sigma_{y_1} \cdots \sigma_{y_p}) \lesssim a^{-\frac{15}{8}p} 2^{-\frac{15}{8}pn},
$$
uniformly over $x$. As a result, \eqref{e:phitight2p} can be bound from above by $C 2^{\frac{1}{8}n}$ for some~$C>0$.
Using the same techniques it is easy to obtain a bound for \eqref{e:phitight1p}:
$$
a^{-\frac{1}{8}} \sup_{x \in \Lambda_k \cap K}  \left[ \sum_{y_1\ldots y_p \in U_a}  \E^\xi_{U_a}(\sigma_{y_1}\cdots \sigma_{y_p})  \prod_{j=1}^{p}  \int_{S_a(y_j)}   \varphi (2^k (z - x)) \d z   \right]^{\frac{1}{p}} \lesssim 1 .
$$

Therefore, by the tightness criterion of Theorem~\ref{t:tight} we have shown that $\Phi_a$ is tight in $\B^{-\frac{1}{8} - \varepsilon, \mathrm{loc}}_{p,q}(U)$ for $p \geqslant 2$ and even. The embedding described in Remark~\ref{r:easy.embed} yields the result for all $p \in [1,\infty]$.
\end{proof}

\subsection{Absence of tightness in higher-order spaces} \label{s:nomag}

In this subsection, we prove Theorem~\ref{t.Ising.converse}.
The proof is based on the following lemma, which is a consequence of the RSW-type bounds for the FK model obtained in \cite{d-chn}.
\begin{lem}[{\cite[Proposition~27]{d-chn}}] \label{l:corr-lowbound}
There exists $c>0$ such that for any $y_1,y_2 \in \Z^2$ with $\mathrm{d}(y_1,y_2) > 0$:
$$
\E^\xi_{\Z^2} (\sigma_{y_1} \sigma_{y_2} ) \geqslant c\,  \mathrm{d}(y_1,y_2)^{-\frac{1}{4}}
$$
for any boundary condition $\xi$.
\end{lem}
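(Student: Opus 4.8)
The plan is to convert the statement into a lower bound on a connection probability for critical FK percolation and then feed it to RSW theory. First, by monotonicity of the Ising two-point function in the boundary condition (the second Griffiths inequality, or, for $+$/free boundary conditions, the monotonicity of $\phi^\xi_{\Z^2}$ in $\xi$ from Lemma~\ref{l:fkg} together with Lemma~\ref{l:fk-ising}), it suffices to prove the bound for the \emph{free} boundary condition, for which Lemma~\ref{l:fk-ising}(4) gives
\[
\E^{\mathrm{free}}_{\Z^2}(\sigma_{y_1}\sigma_{y_2}) = \phi^0_{\Z^2}(A^0_{y_1 y_2}) = \phi^0_{\Z^2}(y_1 \leftrightarrow y_2),
\]
since for two points the event $A^0_{y_1 y_2}$ (every open cluster contains an even number of the $y_i$) is exactly $\{y_1 \leftrightarrow y_2\}$. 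Writing $n = \mathrm{d}(y_1,y_2)$, the claim thus reduces to $\phi^0_{\Z^2}(y_1 \leftrightarrow y_2) \ge c\, n^{-1/4}$. For $n$ below a fixed constant this is immediate (a single open edge already has probability bounded below), so we may assume $n$ large.

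Next I would invoke two inputs from the RSW analysis of the critical planar FK Ising model of \cite{d-chn}: \emph{(i)} uniform-in-scale lower bounds for the $\phi^0_{\Z^2}$-probabilities of crossing a rectangle of fixed aspect ratio and of having an open circuit in an annulus $B(x,2\rho)\setminus B(x,\rho)$; and \emph{(ii)} the one-arm estimate $\phi^0_{\Z^2}(x \leftrightarrow \partial B(x,\rho)) \asymp \rho^{-1/8}$, uniformly in $x$ and $\rho$, whose upper half is Onsager's bound (Lemma~\ref{l:onsager}) and whose matching lower half is the genuinely nontrivial output of the RSW machinery. Given these, a standard construction realises $\{y_1\leftrightarrow y_2\}$ as the intersection of a bounded number of increasing events: an open arm from $y_i$ to $\partial B(y_i,n/4)$ for $i=1,2$ (each of probability of order $n^{-1/8}$ by \emph{(ii)}), together with open circuits in annuli around $y_1$ and $y_2$ and open crossings of $O(1)$ intermediate rectangles of bounded aspect ratio joining the two balls (each of probability bounded below by a constant, by \emph{(i)}), arranged so that topology forces all the pieces to meet — a path crossing an annulus must intersect any circuit winding around it — and hence $y_1,y_2$ lie in a single open cluster on this intersection. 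The FKG inequality (Lemma~\ref{l:fkg}) then bounds the probability of the intersection from below by the product of the individual probabilities, which is of order $(n^{-1/8})^2 = n^{-1/4}$.

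The main obstacle is input \emph{(ii)}, together with the quasi-multiplicativity underlying the gluing: one must know that an open arm from $y_i$ reaching scale $n/4$ can be completed into a circuit at that scale, and connected to the intermediate crossings, at the cost of only a multiplicative constant rather than of an extra power of $n$. This quasi-multiplicativity and the lower bound on the one-arm probability are precisely the substantial content of \cite{d-chn}, which is why the lemma is quoted there as Proposition~27 rather than reproved here; note that the exponent $\tfrac14 = 2\cdot\tfrac18$ is exactly the square of the one-arm exponent. (If one is willing to leave the FK framework, an alternative is to invoke the classical exact asymptotics $\E^{\mathrm{free}}_{\Z^2,\beta_c}(\sigma_0\sigma_{x})\sim C\,|x|^{-1/4}$ for the critical two-dimensional Ising model, which yields the bound directly.)
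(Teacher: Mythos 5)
The paper does not actually prove this lemma: it is imported wholesale as a citation of \cite[Proposition~27]{d-chn}, so there is no internal proof to compare yours against. Your sketch is a sound outline of the argument that lies behind that citation, and you are right to flag that the two substantive inputs — the lower bound on the one-arm probability matching Onsager's upper bound, and the quasi-multiplicativity/gluing that lets you combine two arms of probability of order $\mathrm{d}(y_1,y_2)^{-1/8}$ with $O(1)$ RSW crossings and circuits via FKG (Lemma~\ref{l:fkg}) — are precisely the content of \cite{d-chn} and cannot be reproduced from the toolkit of this paper alone; the reduction of $\E^{\mathrm{free}}_{\Z^2}(\sigma_{y_1}\sigma_{y_2})$ to $\phi^0_{\Z^2}(y_1\leftrightarrow y_2)$ via Lemma~\ref{l:fk-ising} and the identification $A^0_{y_1y_2}=\{y_1\leftrightarrow y_2\}$ for two points are correct. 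One small caveat: your appeal to the second Griffiths inequality to handle ``any boundary condition $\xi$'' is only justified for nonnegative (e.g.\ $+$ or free) boundary data; for mixed $\pm$ boundary conditions Griffiths~II does not directly apply. The cleaner justification within the paper's framework is that for each relevant Ising boundary condition the FK event representing the two-point function is increasing and contains $\{y_1\leftrightarrow y_2\}$, while $\phi^0_{\Z^2}$ is stochastically minimal among FK limit measures (Theorem~\ref{t:weaklimits}), so the free-boundary bound propagates upward — or, more bluntly, uniqueness of the critical infinite-volume Ising measure makes the superscript $\xi$ immaterial. Your parenthetical alternative, quoting the classical asymptotics $\E(\sigma_0\sigma_x)\sim C|x|^{-1/4}$ at criticality, is also a legitimate shortcut, again at the price of an external reference of comparable depth.
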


In order to show the absence of tightness we only need the following partial converse to Proposition~\ref{p:garbound} for two-points correlations.
\begin{lem} \label{l:garbound-converse}
There exists $c > 0$ such that, for every $N \in \N$:
\begin{equation*}
\sum_{y_1, y_2 \in U_N} \E^\xi_{\Z^2} (\sigma_{y_1} \sigma_{y_2} ) \geqslant c (N+1)^{\frac{15}{4}}
\end{equation*}
with $U_N = [0,N]^2 \cap \Z^2 $ and $\E^\xi_{ \Z^2 }$ being the expectation on $\Z^2$ with arbitrary boundary conditions.
\end{lem}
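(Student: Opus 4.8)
The plan is to combine the pointwise lower bound of Lemma~\ref{l:corr-lowbound} with an elementary lattice-point count. First I would discard the diagonal of the double sum (which only adds the nonnegative quantity $\sum_{y \in U_N} \E^\xi_{\Z^2}[\sigma_y^2] = (N+1)^2$) and apply Lemma~\ref{l:corr-lowbound} to the off-diagonal terms, obtaining
\begin{equation*}
\sum_{y_1,y_2 \in U_N} \E^\xi_{\Z^2}(\sigma_{y_1}\sigma_{y_2}) \ge c \sum_{\substack{y_1,y_2 \in U_N \\ y_1 \ne y_2}} \mathrm{d}(y_1,y_2)^{-\frac14}.
\end{equation*}
Thus the statement reduces to the deterministic estimate $\sum_{y_1 \ne y_2 \in U_N} \mathrm{d}(y_1,y_2)^{-1/4} \gtrsim (N+1)^{15/4}$.

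To prove this, I would restrict the outer sum to the central sub-box $U_N' := \llbracket N/4, 3N/4 \rrbracket \cap \Z^2$; for $N$ larger than some absolute constant this contains $\gtrsim (N+1)^2$ points, and it has the key feature that for $y_1 \in U_N'$ every $y_2 \in \Z^2$ with $\mathrm{d}(y_1,y_2) \le N/4$ already lies in $U_N$. For each integer scale $1 \le k \le \lfloor N/4\rfloor$ the number of $y_2$ with $\mathrm{d}(y_1,y_2) = k$ is of order $k$, so
\begin{equation*}
\sum_{\substack{y_2 \in U_N \\ y_2 \ne y_1}} \mathrm{d}(y_1,y_2)^{-\frac14} \gtrsim \sum_{k=1}^{\lfloor N/4 \rfloor} k \cdot k^{-\frac14} = \sum_{k=1}^{\lfloor N/4 \rfloor} k^{\frac34} \gtrsim (N+1)^{\frac74},
\end{equation*}
the last bound by comparison with $\int_0^{N/4} x^{3/4}\,\d x$. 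Summing over the $\gtrsim (N+1)^2$ admissible $y_1 \in U_N'$ gives the claimed order $(N+1)^{15/4}$, and the finitely many small values of $N$ are covered by the diagonal contribution $(N+1)^2$ upon shrinking $c$.

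I do not expect a genuine obstacle here. The only points requiring a little care are the lattice-point count $\#\{y_2 : \mathrm{d}(y_1,y_2)=k\} \asymp k$ (which holds whether $\mathrm d$ denotes the $\ell^1$, $\ell^2$, or $\ell^\infty$ distance on $\Z^2$, with implicit constants depending only on that choice) and verifying that the constants in the chains of "$\gtrsim$" can be taken uniform in $N$, including the harmless passage from $N$ to $N+1$.
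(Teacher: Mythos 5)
Your proof is correct and rests on the same key input as the paper, namely the pointwise lower bound of Lemma~\ref{l:corr-lowbound} followed by elementary counting. The paper's count is even cruder: since every pair $y_1,y_2 \in U_N$ satisfies $\mathrm{d}(y_1,y_2) \lesssim N$, each of the $(N+1)^4$ terms in the double sum is already bounded below by $c(N+1)^{-1/4}$, which gives $c(N+1)^{15/4}$ at once, so your restriction to a central sub-box and the scale-by-scale summation, while valid, are not needed.
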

\begin{proof}
The result is immediate since there are $(N+1)^4$ terms in the sum, each being larger than $c (N+1)^{-\frac{1}{4}}$ for some fixed constant $c>0$.
\end{proof}

We now present an equivalent norm $\mathscr{E}^{\alpha}_p$ for Besov spaces, which reduces to Definition~\ref{def.Besov} in the case $p =\infty$.
\begin{defi}[{\cite[Definition~2.5]{HaLa15}}] 
\label{def.hala}
Let $f \in C^\infty_c$. For every $\alpha < 0$ and $p \in [1,\infty]$ we introduce the norm
$$
\| f \|_{\mathscr{E}^{\alpha}_p} :=
\sup_{\lambda \in (0,1]} \lambda^{-\alpha} \Ll\|  \sup_{\eta \in \mathscr{B}_{r_0}}  \left| \langle f , \eta_{\lambda,x} \rangle
 \right| \Rr\|_{L^p (\d x)} 
$$ 
with $\eta_{\lambda,x} := \lambda^{-d} \eta \Ll( \lambda^{-1} (\cdot - x) \Rr)$ and $\mathscr{B}_{r_0}$ as in Definition~\ref{def.Besov}.
\end{defi}
The following is a straightforward generalization of Proposition~\ref{p.equiv}.
\begin{lem}[{\cite[Proposition~2.6]{HaLa15}}] \label{p.equiv-general}
Let $\alpha < 0$. There exist $C_1,C_2 \in (0,\infty)$ such that for every $f \in C^\infty_c$, we have
\begin{equation}
C_1 \|f\|_{\mathscr{E}^{\alpha}_p} \le \|f\|_{\B^{\al}_{p,\infty}} \le C_2 \|f\|_{\mathscr{E}^{\alpha}_p}.
\end{equation}
\end{lem}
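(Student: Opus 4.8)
The plan is to follow the scheme of the proof of Proposition~\ref{p.equiv}, replacing every occurrence of $\sup_{x\in\Rd}$ there by the norm $\|\cdot\|_{L^p(\d x)}$, and inserting the standard comparisons between $\ell^p$-sums over the dyadic lattices $\Ln$ and $L^p$-integrals. As there, the two inequalities are handled separately, and it suffices to argue for $f\in C^\infty_c$, the general case following by density. I will use throughout a rescaling constant $c=c(d,R)>1$, chosen large enough that, for each $n$, $B(x',R2^{-n})\subset B(x,cR2^{-n})$ whenever $x'\in\Ln$ and $x$ lies in the dyadic cube $Q_{n,x'}$ of side $2^{-n}$ centred at $x'$, and so that moreover the auxiliary functions below, rescaled by $c$, still lie in $\mathscr{B}^{r_0}$ (recall $r\ge r_0$).

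For $\|f\|_{\B^\al_{p,\infty}}\le C_2\|f\|_{\mathscr{E}^\al_p}$, I would first pass to the equivalent norm $\|\V_k f\|_{L^p}+\sup_{n\ge k}2^{\al n}\|\W_n f\|_{L^p}$ (the remark following Proposition~\ref{p:charact-coefs}), with $k$ fixed large enough that $cR2^{-k}\le1$. Since $\psi^{(i)}(R\,\cdot)$ and $\phi(R\,\cdot)$ lie in $\mathscr{B}^{r_0}$ up to a fixed multiplicative constant, a change of variables plus the shift-absorption built into the choice of $c$ shows that for every $n\ge k$, every $x'\in\Ln$ and every $x\in Q_{n,x'}$,
$$
2^{\frac{dn}2}\,|w\ni f|\ \le\ C\,\sup_{\eta\in\mathscr{B}^{r_0}}\,\bigl|\langle f,\eta_{cR2^{-n},x}\rangle\bigr|\ =:\ C\,g_n(x),
$$
and similarly for $2^{\frac{dk}2}|v_{k,x'}f|$ with $g_k$. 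As this holds for \emph{every} $x$ in the cube, I may replace $g_n(x)$ by $\inf_{x\in Q_{n,x'}}g_n$, raise to the power $p$, bound the infimum by the average $2^{dn}\int_{Q_{n,x'}}g_n^p$, and sum over $x'\in\Ln$ (whose cubes tile $\Rd$) and over $i<2^d$; together with Proposition~\ref{p:charact-coefs} this yields $\|\W_n f\|_{L^p}\le C\|g_n\|_{L^p}$, and likewise $\|\V_k f\|_{L^p}\le C\|g_k\|_{L^p}$. Finally $\|g_n\|_{L^p}\le(cR2^{-n})^\al\|f\|_{\mathscr{E}^\al_p}$ by Definition~\ref{def.hala}, so, using $\al<0$, $2^{\al n}\|\W_n f\|_{L^p}\le C\|f\|_{\mathscr{E}^\al_p}$ uniformly in $n\ge k$, and $\|\V_k f\|_{L^p}\le C\|f\|_{\mathscr{E}^\al_p}$, which gives the claim.

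For the reverse inequality $C_1\|f\|_{\mathscr{E}^\al_p}\le\|f\|_{\B^\al_{p,\infty}}$, I would follow the ``conversely'' part of the proof of Proposition~\ref{p.equiv}. Normalise $\|f\|_{\B^\al_{p,\infty}}\le1$, fix $\lambda\in(0,1]$ and $\eta\in\mathscr{B}^{r_0}$, expand $\langle f,\eta_{\lambda,x}\rangle$ in the wavelet basis, and use the bounds \eqref{e:taylor1} and \eqref{e:taylor11} on $w\ni\eta_{\lambda,x}$ (and the companion bound on $v_{0,x'}\eta_{\lambda,x}$), all uniform over $\eta\in\mathscr{B}^{r_0}$, together with the support restriction $w\ni\eta_{\lambda,x}\neq0\Rightarrow|x-x'|\le C(\lambda\vee2^{-n})$. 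Splitting the $n$-sum into the regimes $2^{-n}\le\lambda$ and $2^{-n}\ge\lambda$, in each regime the $n$-th contribution to $\sup_\eta|\langle f,\eta_{\lambda,x}\rangle|$ is pointwise bounded by an explicit power of $\lambda$ and $2^n$ times the sum of $|w\ni f|$ over $x'\in\Ln\cap B(x,C(\lambda\vee2^{-n}))$. Applying Hölder in this finite sum (the number of lattice points being $\lesssim(\lambda2^n)^d\vee1$), then Minkowski to resum the scales after taking $L^p(\d x)$-norms, then Fubini to integrate out $x$ (each $x'$ contributing the ball volume $\lesssim(\lambda\vee2^{-n})^d$), and finally Proposition~\ref{p:charact-coefs}, one obtains
$$
\lambda^{-\al}\,\bigl\|\,n\text{-th contribution}\,\bigr\|_{L^p(\d x)}\ \lesssim\ \begin{cases}\lambda^{-\al-r}\,2^{-(r+\al)n}&\text{if }2^{-n}\le\lambda,\\ \lambda^{-\al}\,2^{-\al n}&\text{if }2^{-n}\ge\lambda,\end{cases}
$$
while the $\V_0$-term behaves as the $n=0$ case of the second line. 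Summing the first family over $n\ge\log_2(1/\lambda)$ (geometric, convergent since $\al>-r$) and the second over $0\le n\le\log_2(1/\lambda)$ (geometric, convergent since $\al<0$) bounds $\lambda^{-\al}\|\sup_\eta|\langle f,\eta_{\lambda,x}\rangle|\|_{L^p}$ by a constant uniformly in $\lambda$; taking the supremum over $\lambda\in(0,1]$ concludes.

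The step I expect to be the main obstacle is the bookkeeping in this last display: one must combine Minkowski's inequality (to resum the scales in $L^p$), Hölder's inequality (to control the inner $\Ln$-sum, with the correct exponent on the number of lattice points in a ball), and Fubini's theorem (to integrate out the centre $x$), and then verify that the exponents of $\lambda$ and $2^n$ that come out still produce a convergent geometric series in each of the two scale regimes. The first inequality is comparatively routine; the only point needing care there is the transition from the $\ell^p$-sum over $\Ln$ to the $L^p$-integral through the ``infimum over a dyadic cube'' device, together with the renormalisation of $\phi$ and the $\psi^{(i)}$ into elements of $\mathscr{B}^{r_0}$ at the slightly inflated scale $cR2^{-n}$.
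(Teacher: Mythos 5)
Your proof is correct: the exponent bookkeeping in both regimes ($\lambda^{-\al-r}2^{-(r+\al)n}$ for $2^{-n}\le\lambda$, and $\lambda^{-\al}2^{-\al n}$ for $2^{-n}\ge\lambda$, summable since $-r<\al<0$) checks out, and the cube-infimum device combined with Proposition~\ref{p:charact-coefs} and the remark following it correctly handles the $\ell^p$/$L^p$ passage in the other inequality. Note that the paper itself gives no proof of this lemma --- it cites \cite[Proposition~2.6]{HaLa15} and describes the statement as a straightforward generalization of Proposition~\ref{p.equiv} --- and your argument carries out precisely that generalization, so it follows the intended route.
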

The advantage of Definition~\ref{def.hala} is that it allows us to easily obtain lower bounds on the Besov norm of some distribution by testing against a non-negative function. We can now proceed to prove Theorem~\ref{t.Ising.converse}.
\begin{proof}[Proof of Theorem~\ref{t.Ising.converse}]
We decompose the proof into three steps.

\smallskip

\emph{Step 1.} In this first step, we recall that for a non-negative random variable $X$, we have
\begin{equation} \label{e:inverse-markov}
\P\Ll[X > \frac{\E[X]}{2}\Rr] \ge \frac {(\E[X])^2}{4 \E[X^2]}.
\end{equation} 
Indeed, this follows from 
\begin{equation*}  
\E[X]  = \E[X \1_{X \le \E[X]/2}] + \E[X \1_{X > \E[X]/2}] \le \frac {\E[X]} 2  + \E\Ll[X^2\Rr]^\frac 1 2 \P\Ll[X > \frac{\E[X]}{2}\Rr]^{\frac 1 2},
\end{equation*}
by the Cauchy-Schwarz inequality.




\smallskip

\emph{Step 2.}
Let $\eta$ be a smooth non-negative function supported on the ball $B(0,1)$ and such that $\eta \equiv 1$ on $B(0,1/2)$. We set $\eta_{\lambda, x} := \lambda^{-2} \eta (\lambda^{-1} (\cdot -x))$ and
\begin{equation*}  
X_{a,\lambda} := \int_{B(0,1)} \Ll| \langle \Phi_a, \eta_{\lambda,x} \rangle \Rr| \, \d x.
\end{equation*}
In this step, we show that there exists a constant $c > 0$ such that for every $a < \lambda \in (0,1]$,
\begin{equation}  
\label{e.main.step2}
\P\Ll[X_{a,\lambda} \ge c \lambda^{-\frac 1 8} \Rr] \ge c.
\end{equation}
As in the proof of Theorem~\ref{t.Ising}, we can use Proposition~\ref{p:garbound} to show that there exists a constant $C < \infty$ such that for every $p \in \{2,4\}$, $a < \lambda \in (0,1]$ and $x \in \R^2$,
\begin{equation} 
\label{e.hello.again}
\E \Ll[ \Ll(\langle \Phi_a, \eta_{\lambda,x}\rangle\Rr)^p \Rr]  \le C \, \lambda^{-\frac p 8}.
\end{equation}
By a similar reasoning, we obtain from Lemma~\ref{l:garbound-converse} that there exists a constant $c > 0$ such that for every $a < \lambda \in (0,1]$  and $x \in \R^2$,
\begin{equation}  
\label{e.lowerguy}
\E \Ll[ \Ll(\langle \Phi_a, \eta_{\lambda,x}\rangle\Rr)^2 \Rr]  \ge  c \, \lambda^{-\frac 1 4}.
\end{equation}
Combining \eqref{e:inverse-markov}, \eqref{e.hello.again} with $p = 4$ and \eqref{e.lowerguy}, we deduce that for every $a < \lambda \in (0,1]$  and $x \in \R^2$,
\begin{equation*}  
\P \Ll[ \Ll|\langle \Phi_a, \eta_{\lambda,x}\rangle\Rr|\ge \frac{\sqrt{c}}{2} \lambda^{-\frac 1 8} \Rr] \ge \frac c C \;.
\end{equation*}
In particular, after reducing the constant $c > 0$ as necessary, we obtain that for every $a < \lambda \in (0,1]$ and $x \in \R^2$,
\begin{equation}
\label{e.estim.step2}
\E \Ll[ \Ll| \langle \Phi_a, \eta_{\lambda,x} \rangle \Rr|  \Rr] \ge c \lambda^{-\frac 1 8},
\end{equation}
and thus that
\begin{equation*}  
\E[X_{a,\lambda}] \ge c \lambda^{-\frac 1 8}.
\end{equation*}
Using \eqref{e.hello.again} with $p = 2$ and Jensen's inequality, we also have, for every $a < \lambda \in (0,1]$,
\begin{equation*}  
\E[X_{a,\lambda}^2] \le C \lambda^{-\frac 1 4}.
\end{equation*}
We therefore obtain \eqref{e.main.step2} by another application of \eqref{e:inverse-markov}.

\smallskip

\emph{Step 3.}
Let $\al > -\frac 1 8$, and let $\ov \Phi$ be a possible limit point of the family $(\Phi_a)_{a \in (0,1]}$. Passing to the limit along a subsequence in \eqref{e.main.step2}, we get that for every $\lambda \in (0,1]$,
\begin{equation}  
\label{e.estim.step3}
\P \Ll[ \int_{B(0,1)} \Ll|\langle \ov \Phi , \eta_{\lambda,x}\rangle\Rr| \, \d x  \ge c \lambda^{-\frac 1 8} \Rr] \ge c  .
\end{equation}
By Remark~\ref{r:easy.embed}, in order to prove Theorem~\ref{t.Ising.converse}, it suffices to show that $\ov \Phi \notin \Bl{\al}_{1,\infty}(\R^2)$ with positive probability. Let $\chi$ be a non-negative smooth function of compact support such that $\chi \equiv 1$ on $B(0,2)$. By Lemma~\ref{p.equiv-general}, there exists a constant $c' > 0$ such that for every $\lambda \in (0,1]$,
\begin{align*}
\|\chi \ov \Phi\|_{\B^\al_{1,\infty}} & \ge c' \lambda^{-\alpha} \int_{\R^2} \left| \langle \chi \bar{\Phi} , \eta_{\lambda,x} \rangle \right| \d x \ge  c' \lambda^{-\alpha} \int_{B(0,1)} \left| \langle \bar{\Phi} , \eta_{\lambda,x} \rangle \right| \d x.
\end{align*}
Combining this with \eqref{e.estim.step3} yields
\begin{equation*}  
\P \Ll[ \|\chi \ov \Phi\|_{\B^\al_{1,\infty}} \ge c c' \lambda^{-\al -\frac 1 8} \Rr] \ge c .
\end{equation*}
Since $\al > -\frac 1 8$, letting $\lambda$ tend to $0$ gives
\begin{equation*}  
\P \Ll[ \|\chi \ov \Phi\|_{\B^\al_{1,\infty}} = +\infty \Rr] \ge c  > 0,
\end{equation*}
which completes the proof.
\end{proof}

\appendix
\section{} \label{a:notselfcont}

In Section~\ref{s:tightcrit} we left behind some details for the sake of self-containedness: in particular the proof of Proposition~\ref{p:mult-2} with Besov spaces of the type $\Bbl$ for any $p,q \geqslant 1$. In order to show that this statement is true in the general case (and not only for $\B^{\alpha,\mathrm{loc}}_{\infty,\infty}$) we need some results about the product of elements of Besov spaces. We obtain these by relating the Besov spaces as defined in this paper with those in \cite{bcd}, defined via the Littlewood-Paley decomposition.

\begin{thm}[{\cite[Proposition~2.9.4]{mey}}] \label{a:t:multimeyer}
Let $\alpha > 0$, $p,q \in [1,\infty]$ and $f \in L^p(\R^d)$. The following two properties are equivalent.
\begin{enumerate}
\item Let $r > \alpha$ be an integer and $\phi,  (V_n)_{n \in \Z}$ be a $r$-regular multiresolution analysis as of Definition~\ref{def:multi}. Then the sequence $2^{n \alpha} \| \W_n f \|_{L^p}$ belongs to $\ell^q (\N )$ and $ \V_0 f $ belongs to $L^p(\R^d)$.
\item There exists a sequence of positive numbers $\varepsilon_n \in \ell^q (\N )$ and a sequence of functions $f_0 , g_0, g_1, \ldots \in L^p (\R^d )$ such that $f = f_0 + \sum_{n \geqslant 0} g_n$,  $\| g_n \|_{L^p} \leqslant \varepsilon_n 2^{- n \alpha}$ for $n \geqslant 0$ and $\| \partial^{k} g_n \|_{L^p} \leqslant \varepsilon_n 2^{(m - \alpha) n}$ for some integer $m > \alpha$ and every multi-index $k \in \N^d$ such that $| k | = m$. 
\end{enumerate} 
In particular, the functions $f_0 = \V_0 f$, $g_n = \W_n f$ verify $(2)$.
Moreover, the norms $\| f_0 \|_{L^p} + \| 2^{n \alpha} \| g_n \|_{L^p} \|_{\ell^q}$ and $\| \V_0 f \|_{L^p} + \| 2^{n \alpha} \| \W_n f \|_{L^p} \|_{\ell^q}$ are equivalent.
\end{thm}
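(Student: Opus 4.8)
The plan is to prove the equivalence $(1)\Leftrightarrow(2)$ together with the claim that the canonical choice $f_0=\V_0 f$, $g_n=\W_n f$ realises the decomposition in $(2)$, and then read off the norm equivalence by tracking constants; this is the classical wavelet argument behind Meyer's proposition. The only inputs I need are the scaling identities for the wavelets $\phi\n,\psii\n$, the coefficient comparison of Proposition~\ref{p:charact-coefs} (its upper half being valid with any compactly supported $L^p$ function in place of $\phi$), the reconstruction formula \eqref{e:decompf}, the moment cancellation \eqref{e:vanish.moment}, and the $L^p$-boundedness of the projections $\V_0,\W_n$, which follows from Proposition~\ref{p:charact-coefs} together with the locally finite overlap of the supports of $(\phi\n)_x$ and $(\psii\n)_x$.

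\emph{From $(1)$ to $(2)$, and the ``in particular'' part.} I would set $f_0:=\V_0 f$ and $g_n:=\W_n f$. Since $\alpha>0$, the sequence $\eps_n:=2^{n\alpha}\|\W_n f\|_{L^p}$ lies in $\ell^q(\N)$ by hypothesis and $\sum_n 2^{-n\alpha}\eps_n<\infty$ by Hölder's inequality, so $\sum_{n\ge0}g_n$ converges in $L^p$ (it converges in $L^2$ by \eqref{e:decompf}) and $f=f_0+\sum_{n\ge0}g_n$. Fix an integer $m$ with $\alpha<m\le r$. From $\partial^k\psii\n(y)=2^{dn/2}2^{n|k|}(\partial^k\psii)(2^n(y-x))$ one sees that $\partial^k\W_n f$ is built from the coefficients $(w\ni f)$ exactly as $\W_n f$ but with $\psii$ replaced by the compactly supported $L^p$ function $\partial^k\psii$; running the upper-bound estimate of Proposition~\ref{p:charact-coefs} with this function and then the lower bound gives $\|\partial^k\W_n f\|_{L^p}\lesssim 2^{n|k|}\|\W_n f\|_{L^p}$. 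For $|k|=m$ this yields $\|\partial^k g_n\|_{L^p}\lesssim 2^{nm}\|\W_n f\|_{L^p}=2^{(m-\alpha)n}\eps_n$, while $\|g_n\|_{L^p}=2^{-n\alpha}\eps_n$; after inflating $(\eps_n)$ by a fixed constant, $(2)$ holds.

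\emph{From $(2)$ to $(1)$, and norm equivalence.} Here, as is implicit in the statement (and realised by the canonical choice), $f_0$ is the low-frequency component, $f_0\in V_0$, so $\W_j f_0=0$ for every $j\ge0$; moreover $\|\V_0 f\|_{L^p}\le C\bigl(\|f_0\|_{L^p}+\sum_n\|g_n\|_{L^p}\bigr)\le C\bigl(\|f_0\|_{L^p}+\|(2^{n\alpha}\|g_n\|_{L^p})_n\|_{\ell^q}\bigr)$ by $L^p$-boundedness of $\V_0$ and Hölder, which controls the first term of $(1)$. For $j\ge1$ I write $\W_j f=\sum_{n\ge0}\W_j g_n$ and estimate $\|\W_j g_n\|_{L^p}$ two ways: for $n\ge j$, by $L^p$-boundedness of $\W_j$, it is $\lesssim\|g_n\|_{L^p}\le 2^{-\alpha n}\eps_n$; for $n<j$, expanding $g_n$ in a Taylor polynomial of degree $m-1$ about each $x\in\Ln$ and using the vanishing moments \eqref{e:vanish.moment} (licit since $m\le r$), the $L^p$ norm of the remainder summed over the locally finitely many translates is $\lesssim 2^{-jm}\|\partial^m g_n\|_{L^p}\le 2^{-\alpha n}2^{-m(j-n)}\eps_n$. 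With $\delta:=\min(\alpha,m-\alpha)>0$, both bounds read $2^{j\alpha}\|\W_j g_n\|_{L^p}\lesssim\eps_n\,2^{-\delta|j-n|}$. Summing over $n$ and applying Young's inequality to the convolution on $\Z$ of $(\eps_n)\in\ell^q$ with $(2^{-\delta|k|})_k\in\ell^1$ gives $\|(2^{j\alpha}\|\W_j f\|_{L^p})_j\|_{\ell^q}\lesssim\|(\eps_n)\|_{\ell^q}$, i.e.\ $(1)$. Tracking constants throughout both implications gives the norm equivalence: the canonical decomposition of the first part shows the infimum over decompositions $(2)$ of $\|f_0\|_{L^p}+\|(2^{n\alpha}\|g_n\|_{L^p})_n\|_{\ell^q}$ is $\lesssim\|\V_0 f\|_{L^p}+\|(2^{n\alpha}\|\W_n f\|_{L^p})_n\|_{\ell^q}$, and the estimates just made give the reverse.

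\emph{Main obstacle.} The one step that is not pure bookkeeping is the estimate $\|\W_j g_n\|_{L^p}\lesssim 2^{-jm}\|\partial^m g_n\|_{L^p}$ for $n<j$: one must carry the Taylor remainder of $g_n$ through an $L^p$ norm, exploiting that $\psii$ is compactly supported with vanishing moments up to order $r-1\ge m-1$ and that the supports $(\supp\psii\n)_{x\in\Ln}$ overlap boundedly; this is exactly where the regularity hypothesis $m\le r$ is used. Everything else is Hölder's and Young's inequalities combined with the wavelet scaling relations.
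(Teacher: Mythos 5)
The paper does not prove this statement at all: it is quoted verbatim from Meyer's book, so there is no internal proof to compare yours against. Judged on its own merits, your argument is the standard wavelet proof and is essentially sound: the canonical choice $f_0=\V_0 f$, $g_n=\W_n f$ together with the Bernstein-type bound $\|\partial^k\W_n f\|_{L^p}\lesssim 2^{n|k|}\|\W_n f\|_{L^p}$ (obtained, as you say, by rerunning the upper half of Proposition~\ref{p:charact-coefs} with $\partial^k\psii$ in place of $\psii$) gives $(1)\Rightarrow(2)$, and the two-regime estimate $2^{j\al}\|\W_j g_n\|_{L^p}\lesssim\eps_n 2^{-\delta|j-n|}$ followed by Young's inequality on $\Z$ gives the converse; the Taylor/vanishing-moment step you single out is indeed the only non-trivial estimate, and your sketch of it is the right one.

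Three points should be made explicit rather than left implicit. First, item $(2)$ as transcribed imposes nothing on $f_0$ beyond $f_0\in L^p$, and then $(2)\Rightarrow(1)$ is false verbatim (take $g_n\equiv 0$ and $f_0=f$ an arbitrary $L^p$ function); you noticed this and read $f_0\in V_0$ into the statement. Meyer's original formulation instead imposes on the $n=0$ block the same two bounds as on the $g_n$; with that reading $\W_j f_0$ is not zero but decays like $2^{-jm}\|\partial^m f_0\|_{L^p}$ and is absorbed into your $n<j$ regime, so either repair works, but the defect of the transcribed statement should be stated openly rather than described as ``implicit''. Second, the identity $f=\V_0 f+\sum_{n\ge 0}\W_n f$ for $f\in L^p$ does not follow from \eqref{e:decompf}, which is an $L^2$ statement; you need $\V_N f\to f$ in $L^p$ (in the distributional or weak-$*$ sense when $p=\infty$) for an $r$-regular analysis, after which the absolute convergence of $\sum_n\W_n f$ in $L^p$ (your H\"older step, using $\al>0$) identifies the limit. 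Third, the theorem concerns a general $r$-regular multiresolution analysis in the sense of Definition~\ref{def:reg-multi}, whose scaling function and wavelets are rapidly decaying but not necessarily compactly supported, whereas your appeals to Proposition~\ref{p:charact-coefs} and to ``locally finite overlap'' use compact support; the same estimates hold with summable tails replacing bounded overlap, and that extra generality is exactly what Proposition~\ref{a:p:equiv} needs when comparing two different analyses. Finally, the closing norm equivalence only makes sense with the decomposition norm read as an infimum over admissible decompositions (a looseness inherited from the statement itself); your two implications, with constants tracked, prove precisely that version.
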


A first consequence of this result is the fact that the Besov spaces defined in Section~\ref{s:tightcrit} are independent from the choice of a particular wavelet basis or multiresolution analysis.

\begin{prop}[Equivalence of multiresolution analyses] \label{a:p:equiv}
For any $\alpha \in \R$ and any positive integer $r$ such that $r > | \alpha |$, the norm $\| \cdot \|_{\Bb}$ of Definition~\ref{def:Besov:wave} does not depend on the given $r$-regular multiresolution analysis, i.e. every $r$-regular multiresolution analysis yields an equivalent norm. 
\end{prop}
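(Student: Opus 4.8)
Fix two $r$-regular multiresolution analyses of $L^2(\R^d)$, with scaling functions $\phi$ and $\phi^*$, wavelets $(\psii)_{i<2^d}$ and $(\psi^{*(i)})_{i<2^d}$, and associated orthogonal projections $\V_n,\W_n$ and $\V^*_n,\W^*_n$; write $\|\cdot\|_\Bb$ and $\|\cdot\|^*_\Bb$ for the two norms of Definition~\ref{def:Besov:wave}. Since swapping the roles of the two analyses is a symmetry of the problem, it suffices to produce a constant $C<\infty$, independent of $f$, such that $\|f\|^*_\Bb\le C\|f\|_\Bb$ for every $f\in C^\infty_c$. When $\al>0$ this is already contained in Theorem~\ref{a:t:multimeyer}: property~(2) there refers to no particular multiresolution analysis, and the theorem states that it is equivalent, for \emph{every} $r$-regular analysis with $r>\al$, to the corresponding wavelet norm. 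The plan is instead to give one almost-orthogonality argument that covers all $\al$ with $|\al|<r$ simultaneously.

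The main step is the kernel estimate: for every real $N<r$ there exists $C_N<\infty$ such that, for all integers $n,m\ge0$ and all $g\in C^\infty_c$,
\[
\|\W^*_n\W_m g\|_{L^p}\le C_N\,2^{-N|n-m|}\,\|\W_m g\|_{L^p},
\]
together with the analogous statements in which $\W_m$ is replaced by $\V_0$ on the input side or $\W^*_n$ by $\V^*_0$ on the output side. I would prove this at the level of wavelet coefficients, using Proposition~\ref{p:charact-coefs} to pass between $L^p$-norms and weighted $\ell^p$-norms of coefficients: writing $\langle g,\psi^{*(i)}_{n,x}\rangle$ in terms of the coefficients of $\W_m g$ reduces everything to bounding the Gram entries $\langle\psi^{(j)}_{m,x'},\psi^{*(i)}_{n,x}\rangle$. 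Expanding the wavelet at the coarser of the two scales in its Taylor polynomial of degree $r-1$ about the centre of the finer one, the polynomial part integrates to zero by the vanishing moments~\eqref{e:vanish.moment}, while the remainder is controlled because both families are $C^r$ with all derivatives of order $\le r$ rapidly decreasing (Definition~\ref{def:reg-multi}); the rapid spatial decay also furnishes enough off-diagonal decay in the centres to perform a Schur test, i.e.\ to sum over the translates at a fixed scale. Weighing the normalisations $2^{dn/2}$ against the number $\sim 2^{d|n-m|}$ of relevant translates leaves the net factor $2^{-N|n-m|}$ for any $N<r$. This kernel estimate — and specifically keeping track of the interplay between scale decay, spatial decay and the $L^p$/$\ell^p$ normalisations — is the one genuinely technical point; it uses nothing beyond Theorem~\ref{t:compact_wave}, \eqref{e:vanish.moment} and Definition~\ref{def:reg-multi}.

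Granting the estimate, the conclusion follows from a discrete Young inequality in the scale index. Using \eqref{e:decompf}, write $f=\V_0 f+\sum_{m\ge0}\W_m f$ (all series converge in $L^p$ as well, since $f\in C^\infty_c$), apply $\W^*_n$, and set $a_n:=2^{\al n}\|\W^*_n f\|_{L^p}$ and $b_m:=2^{\al m}\|\W_m f\|_{L^p}$ for $m\ge0$, together with $b_{-1}:=\|\V_0 f\|_{L^p}$ (treating the $\V_0 f$ term as an index-$(-1)$ contribution, with the convention $|n-(-1)|:=n+1$). Fixing $N\in(|\al|,r)$, the kernel estimate gives $a_n\le C\sum_{m\ge-1}2^{\al(n-m)-N|n-m|}\,b_m$, and since $N>|\al|$ the sequence $k\mapsto 2^{\al k-N|k|}$ lies in $\ell^1(\Z)$; hence $\|(a_n)_{n\ge0}\|_{\ell^q}\le C\|(b_m)_{m\ge-1}\|_{\ell^q}\le C\|f\|_\Bb$ for every $q\in[1,\infty]$. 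For the coarse output one argues the same way: $\|\V^*_0 f\|_{L^p}\le\|\V^*_0\V_0 f\|_{L^p}+\sum_{m\ge0}\|\V^*_0\W_m f\|_{L^p}\le C\|\V_0 f\|_{L^p}+C\sum_{m\ge0}2^{-Nm}\|\W_m f\|_{L^p}\le C\|f\|_\Bb$, the last sum being finite by Hölder in the scale index because $N>-\al$. Summing these two bounds yields $\|f\|^*_\Bb\le C\|f\|_\Bb$, which is what we wanted. The main obstacle, as indicated, is the kernel estimate; the remainder is bookkeeping.
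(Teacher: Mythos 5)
Your proposal is correct, but it takes a genuinely different route from the paper. The paper argues by cases: for $\al>0$ it simply invokes Theorem~\ref{a:t:multimeyer} (Meyer's characterization of $\B^{\al}_{p,q}$ by decompositions $f=f_0+\sum_n g_n$ with size and derivative bounds, which makes no reference to any particular multiresolution analysis); for $\al<0$ it introduces the auxiliary norm $\|f\|_{\tilde{\B}^{\al}_{p,q}}=\sup\{\langle f,g\rangle:\|g\|_{\B^{-\al}_{p',q'}}\le 1\}$, which is analysis-independent by the first case, and proves its equivalence with $\|\cdot\|_{\Bb}$ by a duality argument (choosing near-optimal $g_n\in L^{p'}$ for each block and reassembling them); the case $\al=0$ is then recovered by interpolation. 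You instead prove the almost-diagonal estimate $\|\W^*_n\W_m g\|_{L^p}\lesssim 2^{-N|n-m|}\|\W_m g\|_{L^p}$ (any $N<r$; in fact $N=r$ comes out of the computation) from the vanishing moments \eqref{e:vanish.moment}, a Taylor expansion of the coarser function about the centre of the finer wavelet, and a Schur test on the Gram matrix, and you conclude with a discrete Young inequality in the scale index. This handles all $|\al|<r$ — including $\al=0$ and $\al<0$ — in one stroke, avoids duality and interpolation entirely, and yields a quantitative almost-orthogonality bound of independent interest; the paper's route is shorter because it outsources the hard part to Meyer's theorem. Two points you should make explicit to complete your argument: the $r$-regularity and vanishing moments of the \emph{wavelets} of the second analysis (Definition~\ref{def:reg-multi} only constrains the scaling function) and the coefficient characterization of Proposition~\ref{p:charact-coefs} must hold for both analyses, in particular for one whose wavelets are merely rapidly decreasing rather than compactly supported — the paper proves Proposition~\ref{p:charact-coefs} only for the compactly supported basis, so you should either appeal to the general form \cite[Proposition~6.10.7]{mey} or rerun your Schur argument at that step; and the termwise application of $\W^*_n$ and $\V^*_0$ to $f=\V_0 f+\sum_{m\ge 0}\W_m f$ should be justified (for $f\in C^\infty_c$ the series converges in $L^2$ and the termwise $L^p$ bounds are absolutely summable, so this is routine bookkeeping).
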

\begin{proof}
Theorem~\ref{a:t:multimeyer} gives the equivalence of norms for $\alpha > 0$. 

For $\alpha < 0$, $p,q \in [1,\infty]$, define $\alpha' = - \alpha$, $1/p+1/p'=1$ and $1/q + 1/q' =1$. We introduce the following norm which is clearly independent from the choice of multiresolution analysis:
$$
\| f \|_{{ \tilde{\B}^{\alpha}_{p,q}}} = \sup_{\substack{g \in L^{p'} \\ \| g \|_{{\B^{\alpha'}_{p',q'}}} \leqslant 1 } } \langle f , g \rangle  
$$
(notice that this norm is slightly different from the norm of the dual of $\B^{\alpha'}_{p',q'}$, because we choose $\B^{\alpha'}_{p',q'}$ to be the complection of $C^\infty_c$ with respect to the norm $\| \cdot \|_{\B^{\alpha'}_{p',q'}}$ ).

We want to show that $\| \cdot \|_{ \tilde{\B}^{\alpha}_{p,q}}$ and $\| \cdot \|_{\Bb}$ are equivalent. Let $f \in \C^\infty_c$. 
The bound $\| f \|_{\tilde{\B}^{\alpha}_{p,q}} \lesssim \| f \|_{\B^{\alpha}_{p,q} }$ is straightforward: by Theorem~\ref{a:t:multimeyer}  we can write $g = \V_0 g + \sum_n \W_n g$ and obtain
$$
\langle f , g \rangle = \langle \V_0 f , \V_0 g \rangle + \sum_{n \geqslant 0} \langle \W_n f , \W_n g \rangle \leqslant \| f\|_{\Bb} \| g \|_{\B^{\alpha'}_{p',q'}}
$$
thanks to the orthogonality in $L^2$ between spaces $W_n$ and Hölder's inequality.

To show that $\| f \|_{\B^{\alpha}_{p,q} } \lesssim \| f \|_{\tilde{\B}^{\alpha}_{p,q}} $, recall that if $f \in L^p(\mu )$ then 
$$
\| f\|_{L^p (\mu)} = \sup_{g \in L^{p'} ( \mu ) , \| g \|_{L^{p'}} \leqslant 1 } \int f(x) g(x) \mu ( \mathrm{d}x)
$$ (see e.g. Lemma~1.2 of \cite{bcd}). Then for every $\delta >0$ there exists $h_0 \in L^{p'}$ such that $\| h_0 \|_{L^{p'}} \leqslant 1$ and  
$\| \V_0 f \|_{L^p} \leq \int \V_0 f (x) h_0 (x) \mathrm{d}x + \delta $. Take $Q_N^{q'}=\{ (a_n)_{n \geqslant 0} \in \ell^{q'} \mid \| a_n \|_{\ell^{q'}} \leqslant 1 \mbox{ , } a_n=0 \mbox{ for }n>N \}$. We have
$$
\| f \|_{\Bb} = \| \V_0 f \|_{L^p} +  \sup_{N \in \N} \sup_{(a_n) \in Q_N^{q'}} \sum_{n = 0}^N a_n 2^{\alpha n} \| \W_n f \|_{L^p}  .
$$
As above, for every $n \geqslant 0$ there exist $g_n \in L^{p'}$ such that $\| g_n \|_{L^{p'}} \leqslant 1$ and $\| \W_n f \|_{L^p} \leqslant \int \W_n f (x) g_n (x) \mathrm{d}x + \varepsilon_n . $ 
Now we can estimate the norm
\begin{align*}
\| f \|_{\Bb} & \leqslant \langle \V_0 f , \V_0 h_0 \rangle + \sup_{N \in \N} \sup_{(a_n) \in Q_N^{q'}} \sum_{n =0}^N  \langle \W_n f , 2^{n \alpha} a_n \W_n g_n \rangle + \varepsilon \\
\varepsilon  & = \delta + \sup_{N \in \N} \sup_{(a_n) \in Q_N^{q'}} \sum_{n =0}^N 2^{n \alpha} a_n \varepsilon_n
\end{align*}
where we used the fact that the spaces $W_n$ are orthogonal in $L^2$. The remainder $\varepsilon$ can be made arbitrarily small: indeed $\sum_{n =0}^N 2^{n \alpha} a_n \varepsilon_n \leqslant \| 2^{\alpha n}\|_{\ell^{q}} \sup_{n \geq 0} \varepsilon_n $ (recall that $\alpha < 0$).  Define 
$$
g_N = \V_0 h_0 + \sum_{n =0}^N 2^{n \alpha} a_n \W_n g_n.
$$ 
The operators $\V_n : L^p \rightarrow L^p$ and $\W_n : L^p \rightarrow L^p$ are uniformly bounded: we can estimate the norm of $g_N$ as
$$
\| g_N \|_{\B^{\alpha'}_{p',q'}} \leqslant \| h_0 \|_{L^{p'}} + \| 2^{n \alpha'} 2^{n \alpha} a_n \| g_n \|_{L^{p'}} \|_{\ell^{q'}} \leqslant C
$$
and then
$$
\| f \|_{\Bb} \leqslant \sup_{N \in \N} \sup_{(a_n) \in Q_N^{q'}} \langle f , g_N \rangle + \varepsilon = \sup_{\substack{g_N \in L^{p'} \\ \| g_N \|_{\B^{\alpha'}_{p',q'}} \leqslant C } } \langle f , g_N \rangle + \varepsilon \lesssim \| f \|_{\tilde{\B}^{\alpha}_{p,q}} + \varepsilon.
$$
This completes the proof of the result for $\al \neq 0$. The case $\al = 0$ can then be recovered by interpolation.
\end{proof}

We now introduce the Littlewood-Paley decomposition. We refer to \cite[Chapter~2]{bcd} for this definition.

\begin{prop}[Dyadic partition of unity]
 There exist $\chi \in \C^\infty_c (\R^d)$ with values in $[ 0, 1] $and support cointained in the
  ball $\mathcal{B}=\{ x \in \R^d \mid  | x| \leqslant 3/4 \}$, and $\rho \in C^\infty_c (\R^d)$ with values in $[ 0, 1]$ and support
  contained in the annulus $\mathcal{A}=\{ x \in \R^d \mid 3/4 \leqslant | x| \leqslant 8/3 \}$, such that for every $x \in
  \mathbbm{R}^d :$
  \[ 1 = \chi ( x) + \sum_{n \geqslant 0} \rho ( 2^{- n} x) \]
  and the sum is finite. We have also that, if $| n - n' | \geqslant 2$ :
  \begin{equation}
    \supp \rho ( 2^{- n} \cdot ) \cap \supp \rho ( 2^{- n'}
    \cdot ) = \emptyset 
  \end{equation}
  and if $n \geqslant 1$:
  \begin{equation*}
    \supp \chi \cap \supp \rho ( 2^{- n} \cdot ) = \emptyset
  \end{equation*}
\end{prop}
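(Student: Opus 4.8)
The plan is to carry out the classical construction of the dyadic (Littlewood--Paley) partition of unity, as in \cite[Chapter~2]{bcd}; once the two building blocks $\chi$ and $\rho$ are in place, every assertion in the statement follows by elementary manipulations.

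First I would produce an auxiliary radial profile. Choose a smooth non-increasing function $h\colon\R\to[0,1]$ with $h\equiv1$ on $(-\infty,3/2]$ and $h\equiv0$ on $[8/3,+\infty)$ — for instance by convolving the indicator of $(-\infty,2]$ with a mollifier of sufficiently small support — and set $\psi(\xi):=h(|\xi|)$. Since $h$ is constant near the origin, $\psi\in\C^\infty_c(\R^d)$ takes values in $[0,1]$, satisfies $\psi\equiv1$ on $\{|\xi|\le3/2\}$ and $\supp\psi\subset\{|\xi|\le8/3\}$, and is non-increasing along rays, so that $\psi(2\xi)\le\psi(\xi)$ for every $\xi\in\R^d$. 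Then I would define
\begin{equation*}
\chi(\xi):=\psi(2\xi),\qquad \rho(\xi):=\psi(\xi)-\psi(2\xi).
\end{equation*}
Ray-monotonicity gives $0\le\chi\le1$ and $0\le\rho\le1$; since $\psi(\xi)=\psi(2\xi)=1$ whenever $|\xi|\le3/4$, and $\rho$ vanishes off $\supp\psi$, we obtain $\supp\rho\subset\mathcal A$, and similarly $\supp\chi\subset\{|\xi|\le4/3\}$ (the ball in the statement).

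The partition-of-unity identity comes from telescoping: for every integer $N\ge0$,
\begin{equation*}
\chi(\xi)+\sum_{n=0}^{N}\rho(2^{-n}\xi)=\psi(2\xi)+\sum_{n=0}^{N}\bigl(\psi(2^{-n}\xi)-\psi(2^{-(n-1)}\xi)\bigr)=\psi(2^{-N}\xi),
\end{equation*}
and $\psi(2^{-N}\xi)=1$ as soon as $2^{-N}|\xi|\le3/2$; hence for each fixed $\xi$ the series $\sum_{n\ge0}\rho(2^{-n}\xi)$ is actually a finite sum and equals $1-\chi(\xi)$, which is the claimed identity together with the local finiteness. For the two disjointness statements one uses $\supp\rho(2^{-n}\cdot)\subset\{2^{n}\cdot3/4\le|\xi|\le2^{n}\cdot8/3\}$: if $n'\ge n+2$ then $2^{n'}\cdot3/4\ge2^{n}\cdot3>2^{n}\cdot8/3$, so the two annuli are disjoint (and the case $|n-n'|\ge2$ reduces to this one by symmetry), while for $n\ge1$ one has $\supp\rho(2^{-n}\cdot)\subset\{|\xi|\ge3/2\}$, which is disjoint from $\supp\chi\subset\{|\xi|\le4/3\}$ since $4/3<3/2$. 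I do not expect any genuine obstacle here; the only step deserving a little care is arranging the radial profile $\psi$ to be simultaneously smooth at the origin and supported in exactly the right ball, which is a routine mollification.
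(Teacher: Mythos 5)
Your construction is the standard one from \cite[Chapter~2]{bcd}, which is precisely the source the paper defers to for this proposition (the paper itself gives no proof), and each of your verifications --- the telescoping identity, the local finiteness, and the two support-disjointness claims --- is correct. One caveat: your $\chi$ satisfies $\supp \chi \subset \{|x| \le 4/3\}$, not $\{|x| \le 3/4\}$ as literally printed in the statement; the printed radius $3/4$ is a typo (it is incompatible with the partition identity, which forces $\chi \equiv 1$ on the open ball of radius $3/4$ and hence contradicts continuity combined with $\supp \chi \subset \{|x| \le 3/4\}$), so your parenthetical ``the ball in the statement'' should instead note that the correct radius is $4/3$, as in the cited reference.
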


\begin{defi}[Littlewood-Paley-Besov space]\label{a:def:litpaley}
Let $f \in C^\infty_c$, for every $n \geqslant - 1$ the dyadic
  Littlewood-Paley blocks are defined as
  \begin{align*}
    \Delta_{- 1} u & =  \mathcal{F}^{- 1} ( \chi \hat{f}) \\
    \Delta_n u & =  \mathcal{F}^{- 1} ( \rho ( 2^{- n} \cdot )  \hat{f})  \quad \mbox{for every } n \geqslant 0
  \end{align*}
where $\mathcal{F}(f) = \hat{f}$ is the Fourier transform of $f$ (and $\mathcal{F}^{-1}$ its inverse). 

Define the norm $\| \cdot \|_{\B^{\alpha, \mathrm{LP}}_{p,q}}$ as
$$
\| f \|_{\B^{\alpha, \mathrm{LP}}_{p,q}} = \Ll\| \Ll(2^{\alpha n} \| \Delta_n f \|_{L^p}\Rr)_{n \ge -1} \Rr\|_{\ell^q}
$$
and the Littlewood-Paley-Besov space $\B^{\alpha, \mathrm{LP}}_{p,q}$ as the closure of $C^\infty_c$ with respect to this norm.
\end{defi}

\begin{rem}
It is easy to check that the space $\B^{\alpha, \mathrm{LP}}_{p,q}$ does not depend on the choice of a dyadic partition of unity $\chi, \rho \in C^\infty_c$, and that the operators $\Delta_n : L^p \rightarrow L^p$, $\Delta_{-1} : L^p \rightarrow L^p$ are uniformly bounded for every $p \in [1,\infty]$ (see \cite[Section~2.2]{bcd}).
\end{rem}

\begin{rem}[Equivalence of LP-wavelet Besov spaces]\label{a:r:litpaley-mult}
The space $\B^{\alpha, \mathrm{LP}}_{p,q}$ defined above coincides with the Besov space $\Bb$ that we used throughout these notes (Definition~\ref{def:Besov:wave}): i.e. for $f \in C^\infty_c$ their respective norms are equivalent. Indeed, the functions $\Delta_{-1} f$ and $\Delta_n f$ verify the conditions within point $(2)$ of Theorem~\ref{a:t:multimeyer}. The property
$$
\| \partial^{k} \Delta_n f \|_{L^p} \leqslant \varepsilon_n 2^{(m - \alpha) n}
$$
for $\varepsilon_n \in \ell^q$ 
is obtained by \emph{Bernstein estimates} \cite[Lemma~2.1]{bcd}, while the other two conditions are easily checked directly. 
\end{rem}

Now we can use Theorems~2.82 and 2.85 of \cite{bcd}, which yield a general proof of Proposition~\ref{p:mult-2}.

\begin{thm}[Multiplicative inequalities]
\label{a:t:mult}
Let $p, p_1, p_2, q, q_1, q_2 \in [1,\infty]$ be such that
$$
\frac{1}{p} = \frac{1}{p_1}  +   \frac{1}{p_2} \quad \text{ and } \quad \frac{1}{q} = \frac{1}{q_1}  +   \frac{1}{q_2}.
$$
(1) If $\al > 0$, then the mapping $(f,g) \mapsto fg$ extends to a bilinear continuous functional from $\B^\al_{p_1,q_1} \times \B^\al_{p_2,q_2}$ to $\Bb$.

\noindent (2) If $\al < 0 < \be$ with $\al + \be > 0$, then the mapping $(f,g) \mapsto fg$ extends to a bilinear continuous functional from $\B^\al_{p_1,q_1} \times \B^\be_{p_2,q_2}$ to $\Bb$.
\end{thm}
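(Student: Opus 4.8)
The plan is to reduce the statement to the corresponding multiplicative estimates in the Littlewood–Paley–Besov scale, for which one can cite Bony's paracalculus directly. By Remark~\ref{a:r:litpaley-mult}, the wavelet-defined spaces $\Bb$ of Definition~\ref{def:Besov:wave} coincide, with equivalent norms, with the Littlewood–Paley–Besov spaces $\B^{\al,\mathrm{LP}}_{p,q}$ of Definition~\ref{a:def:litpaley}. Hence it suffices to prove that multiplication is bounded from $\B^{\al_1,\mathrm{LP}}_{p_1,q_1}\times\B^{\al_2,\mathrm{LP}}_{p_2,q_2}$ to $\B^{\al,\mathrm{LP}}_{p,q}$ in the two regimes stated (with $\al_1=\al_2=\al>0$ in case (1), and $\al_1=\al<0<\be=\al_2$ with $\al+\be>0$ in case (2)), since then the same boundedness holds for the wavelet spaces by transport of the norm equivalence, first on $C^\infty_c\times C^\infty_c$ and then by density of $C^\infty_c$ in each factor.

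For the Littlewood–Paley statement, I would invoke Bony's decomposition $fg = T_f g + T_g f + R(f,g)$, where $T_f g = \sum_n S_{n-1}f\,\Delta_n g$ is the paraproduct and $R(f,g)=\sum_{|n-n'|\le 1}\Delta_n f\,\Delta_{n'}g$ the resonant term. The two continuity bounds are precisely \cite[Theorems~2.82 and~2.85]{bcd}: the paraproduct $T$ maps $L^{p_1}\times\B^{\al_2}_{p_2,q_2}$ (and, when $\al_1>0$, also $\B^{\al_1}_{p_1,q_1}$ in the first slot does not help beyond $L^\infty\cap L^{p_1}$; the relevant statement is continuity into $\B^{\al_2}_{p,q_2}$ when the first factor has nonpositive regularity, and into $\B^{\al_1+\al_2}_{p,q}$ when both are positive), while the remainder $R$ is continuous from $\B^{\al_1}_{p_1,q_1}\times\B^{\al_2}_{p_2,q_2}$ to $\B^{\al_1+\al_2}_{p,q}$ precisely under the condition $\al_1+\al_2>0$. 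In case (1), with $\al_1=\al_2=\al>0$, all three pieces $T_f g$, $T_g f$, $R(f,g)$ land in $\B^{\al}_{p,q}$ (the paraproducts in fact land in the better space $\B^{2\al}_{p,q}\subset\B^\al_{p,q}$ via the Besov embedding of Proposition~\ref{p:loc-emb}, or one simply uses the crude bound $T_fg\in\B^{\al_2}_{p,q}$ since $f\in\B^\al_{p_1,q_1}\hookrightarrow L^{p_1}$ by Bernstein estimates and $\al>0$). In case (2), $T_f g$ lands in $\B^{\be}_{p,q}\subset\B^\al_{p,q}$ since $f\in\B^\al_{p_1,q_1}\hookrightarrow L^{p_1}$ (here $\al<0$, but the paraproduct $T_fg$ only needs $f\in L^{p_1}$, which follows because $p_1<\infty$ forces $\al\le 0$ to still embed into $L^{p_1}$ only if... ) — more carefully, one uses $\|S_{n-1}f\|_{L^{p_1}}\lesssim 2^{-n\al}\eps_n\|f\|_{\B^\al_{p_1,q_1}}$ to get $T_fg\in\B^{\al+\be}_{p,q}\subset\B^\al_{p,q}$; the term $T_g f$ lands in $\B^\al_{p,q}$ because $g\in\B^\be_{p_2,q_2}\hookrightarrow L^{p_2}$ (as $\be>0$); and $R(f,g)\in\B^{\al+\be}_{p,q}\subset\B^\al_{p,q}$ by the resonant estimate, which is exactly where the hypothesis $\al+\be>0$ is used. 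Summing the three contributions and applying the Hölder relations $1/p=1/p_1+1/p_2$, $1/q=1/q_1+1/q_2$ throughout yields the claimed bilinear bound.

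The main obstacle is purely bookkeeping rather than conceptual: one must verify that the regularity and integrability indices produced by each of the three Bony pieces all dominate (in the sense of Remark~\ref{r:easy.embed} / Proposition~\ref{p:loc-emb}) the target space $\B^\al_{p,q}$, and in particular that the embeddings invoked, e.g. $\B^{\al+\be}_{p,q}\hookrightarrow\B^\al_{p,q}$, are licit given the constraint $|\gamma|<r$ on admissible regularity exponents for the wavelet characterization — this forces a preliminary remark that $r$ can be chosen large enough (or that the embeddings in question are among Littlewood–Paley spaces, where no such constraint appears, the passage to wavelet spaces being made only at the very end). A secondary point requiring care is the density argument: since $\Bb$ is defined as the completion of $C^\infty_c$, one proves the inequality for smooth compactly supported $f,g$ — where $fg$ is again in $C^\infty_c$ and all norms are the honest ones — and then extends by bilinearity and continuity, noting that $C^\infty_c$ is dense in each $\B^{\al_i}_{p_i,q_i}$ by construction.
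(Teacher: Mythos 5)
Your overall route is exactly the paper's: the paper offers no argument beyond the equivalence of the wavelet and Littlewood--Paley norms (Remark~\ref{a:r:litpaley-mult}, via Theorem~\ref{a:t:multimeyer}) followed by the citation of \cite[Theorems~2.82 and~2.85]{bcd}, and your reduction to the Littlewood--Paley scale, Bony decomposition, and density argument is that same proof spelled out in more detail.

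Two points in your bookkeeping do not hold up, however. First, the parenthetical claim that in case (1) the paraproducts land in $\B^{2\al}_{p,q}$ is false: $T_fg$ never has better regularity than the second factor, and with a first factor of positive regularity one only gets $\|T_fg\|_{\B^{\al}_{p,q_2}}\lesssim\|f\|_{L^{p_1}}\|g\|_{\B^{\al}_{p_2,q_2}}$; the gain $\al_1+\al_2$ occurs only when the first factor has negative regularity. Second, and more substantively, the paraproduct terms do not reach the stated third index. At regularity exactly $\al$ you obtain $T_fg\in\B^{\al}_{p,q_2}$ and $T_gf\in\B^{\al}_{p,q_1}$ (in case (2) the offending term is $T_gf\in\B^{\al}_{p,q_1}$), and since $1/q=1/q_1+1/q_2$ forces $q\le q_1,q_2$, the spaces $\B^{\al}_{p,q_i}$ are \emph{larger} than $\B^{\al}_{p,q}$, not smaller: the embedding you invoke goes the wrong way. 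Only the remainder (and, in case (2), $T_fg$) lands in $\B^{\al+\be}_{p,q}$ with the harmonic-mean index, which then embeds into $\B^{\al}_{p,q}$ by the regularity gain. So the Bony argument proves the theorem with the target $\B^{\al}_{p,\max(q_1,q_2)}$, which is also all that \cite[Theorems~2.82 and~2.85]{bcd} provide; the harmonic-mean claim at fixed regularity $\al$ cannot be rescued (multiplying a compactly supported $f\in\B^{\al}_{p_1,q_1}$ by a smooth cutoff equal to $1$ on its support shows no improvement of the third index is possible). This defect originates in the statement itself rather than in your strategy, and it is harmless for the paper's only use of the theorem (Proposition~\ref{p:mult-2}, where the smooth factor carries the index $\infty$), but a complete write-up should either weaken the target accordingly or note this restriction explicitly.
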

%

\begin{rem}\label{a:r:mult-2}
Theorem~\ref{a:t:mult} yields, as announced, a complete proof of Proposition~\ref{p:mult-2}. In Section~\ref{s:tightcrit} we proved that for any $\alpha < 0$ and $\chi \in C^{r_0}_c$, $r_0=-\lfloor \alpha \rfloor$, the mapping $f \mapsto \chi f$ extends to a continuos functional on $\C^\alpha$. This result can be extended to $\Bb$ observing that $C^{r_0}_c \subset \B^{r_0}_{\infty,\infty}$ (see \cite[Section~2.7]{bcd}) and applying the theorem above.
\end{rem}

%
%

\end{document}